\documentclass[reqno]{amsart}

\usepackage{begnac}

\renewcommand{\det}{\operatorname{det}}
\newcommand{\brbinom}[2]{{#1 \brack #2}}

\mynewop{loc}

\mynewop{lindim}

\begin{document}

\title[Everything I always wanted to know]{Everything I always wanted to know about resultants and Chow forms* \\
  \small (*But was too lazy to ask)}

\author{Itaï \textsc{Ben Yaacov}}

\address{Itaï \textsc{Ben Yaacov} \\
  Univ Lyon \\
  Université Claude Bernard -- Lyon 1 \\
  Institut Camille Jordan, CNRS UMR 5208 \\
  43 boulevard du 11 novembre 1918 \\
  69622 Villeurbanne Cedex \\
  France}

\urladdr{\url{http://math.univ-lyon1.fr/~begnac/}}

\keywords{Resultant; Chow form; Intersection degree}

\begin{abstract}
  This note develops some fundamental properties of resultants and related notions.
  It represents my own personal exploration of this domain, which I found more instructive than seeking answers in the standard literature.
  Consequently, notation and terminology may be quite idiosyncratic, and the approach is very algebraic.

  Read at your own risk.
\end{abstract}

\maketitle

\tableofcontents

\section*{Introduction}

For a standard reference regarding resultants and intersections, see Gelfand, Kapranov and Zelevinsky~\cite{Gelfand-Kapranov-Zelevinsky:Discriminants}.

When working in ambient projective dimension $n$, we let $X = (X_0,\ldots,X_n)$ denote indeterminates representing homogeneous coordinates.
For $\alpha \in \bN^{n+1}$, we define the corresponding monomial as $X^\alpha = \prod X_s^{\alpha_s}$.
We also consider the \emph{dual indeterminates} $X^* = (X^*_0,\ldots,X^*_n)$, and let $X^* \cdot X = \sum X^*_s X_s$.
We may think of $X^* \cdot X$ as an indeterminate linear form in $X$, with indeterminate coefficients $X^*$.
We may think of $X^*$ as a row matrix, and of $X$ as a column, so $X^* \cdot X$ is just the matrix product $X^* X$.

Having defined $X^\alpha$ and $X^* \cdot X$, we no longer need to reference individual coordinates of either $X$ or $X^*$.
The notation $X_i$ or $X^*_i$ will be reserved (unless explicitly stated otherwise) for entire copies of $X$ or $X^*$, respectively.

Throughout, we are going to work in one of two settings.
\begin{itemize}
\item In the \emph{algebraic setting}, we work in the category of unital commutative rings.
  Working \emph{over} a ring $A$ means working in the category of $A$-algebras, namely of rings $B$ equipped with a morphism from $A$.
\item Alternatively, in the (Weil-style) \emph{algebraic geometry setting} we work over a field $k$.
  The category of $k$-algebras can be replaced with a single field extension $K$ that is algebraically closed and of infinite transcendence degree over $k$.
\end{itemize}

If $A$ is a ring (always commutative, with unit), then $A[X] = \bigoplus_d A[X]_d$ is a graded ring, where $A[X]_d$ denotes the set of homogeneous polynomials of degree $d$.

\section{Special kinds of polynomials}

\subsection{Symmetric polynomials}

\begin{ntn}
  \label{ntn:Monomials}
  We fix a projective dimension $n$.
  We denote by $\bN_d$ the set of $\alpha \in \bN^{n+1}$ such that $\sum \alpha_i = d$.
  In other words, the monomials of degree $d$ in $X$ are $\{X^\alpha : \alpha \in \bN_d\}$.
\end{ntn}

\begin{ntn}
  Let $D \in \bN$, let $X_i$ be copies of $X$, and let $X_{<D} = (X_i : i < D)$.
  Let $d \in \bN$.
  \begin{itemize}
  \item We define $\fM_{D,d}$ as the set of all monomials in $X_{<D}$ that are homogeneous of degree $d$ in each $X_i$.
  \item For $\fm \in \fM_{D,d}$, define $P_\fm = \sum \fS_D \cdot \fm$ (notice that we take the sum of the set, so each monomial appears only once, even if it is obtained by more than one permutation).
  \end{itemize}
\end{ntn}

\begin{dfn}
  \label{dfn:Symmetric}
  A \emph{homogeneous symmetric polynomial} of degree $d$ in $D$ $n$-dimensional indeterminates, over a ring $A$, is a polynomial $P \in A[X_{<D}]$ that is homogeneous of degree $d$ in each group $X_i$, and invariant under the action of the symmetric group $\fS_D$.
  Since the dimension $n$ is fixed, we usually omit it.

  Equivalently, the collection of homogeneous symmetric polynomials of degree $d$ in $D$ indeterminates over $A$ is the free $A$-module generated by the set $\{ P_\fm : \fm \in \fM_{D,d}\}$.
\end{dfn}

Let us define polynomials $\sigma_\alpha$ for $\alpha \in \bN_D$ by
\begin{gather}
  \label{eq:GD}
  G_D(X^*) = \prod_{i<D} \, (X^* \cdot X_i) = \sum_{\alpha \in \bN_D} \sigma_\alpha {X^*}^\alpha.
\end{gather}
Each $\sigma_\alpha$ is symmetric and homogeneous of degree one (in each $X_i$).

\begin{dfn}
  \label{dfn:ElementarySymmetric}
  The polynomials $\{\sigma_\alpha : \alpha \in \bN_D\}$ are the \emph{elementary} homogeneous symmetric polynomials in $D$ indeterminates.
\end{dfn}

For any monomial $\fm$ in the indeterminates $X_{<D}$ we may substitute $X$ for each $X_i$, and obtain $\delta(\fm) \in \bN^{n+1}$ as the multi-exponent in the identity $\fm(X,X, \ldots, X) = X^{\delta(\fm)}$.
If $\fm \in \fM_{D,d}$, then $\delta(\fm) \in \bN_{Dd}$.
If $\fm \in \fM_{D,1}$, then $P_\fm = \sigma_{\delta(\fm)}$.
The map $\delta\colon \fM_{D,1} \rightarrow \bN_D$ is onto and $\{\sigma_\alpha : \alpha \in \bN_D\}$ is an enumeration, without repetitions, of $\{P_\fm : \fm \in \fM_{D,1}\}$.

Observe that if $\fm,\fm' \in \fM_{D,1}$ are distinct, and $\delta(\fm) = \delta(\fm')$, then there exists a third monomial $\fm'' \in \fM_{D,1}$ such that $\fm'' \mid \lcm(\fm,\fm')$ and $\delta(\fm'') > \delta(\fm)$ in lexicographical order.
Consequently, for every $\fm \in \fM_{D,d+1}$ there exists a unique $\fm_0 \in \fM_{D,1}$ such that $\fm_0 \mid \fm$ and $\delta(\fm_0)$ is maximal.

\begin{fct}
  \label{fct:Symmetric}
  The collection of all homogeneous symmetric polynomials of degree $d$ in $D$ variables over $A$ is exactly $A[\sigma_D]_d$, namely, the collection of polynomials of degree $d$ in the elementary symmetric polynomials $\sigma_D$.
\end{fct}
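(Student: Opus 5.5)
Both inclusions are needed. The easy one is that every polynomial of degree $d$ in the $\sigma_\alpha$ is a homogeneous symmetric polynomial of degree $d$. Each $\sigma_\alpha$ is symmetric and homogeneous of degree one in each $X_i$, so a product of $d$ of them is symmetric and homogeneous of degree $d$ in each $X_i$. The real content is the converse: the free $A$-module spanned by $\{P_\fm : \fm \in \fM_{D,d}\}$ (Definition~\ref{dfn:Symmetric}) is contained in $A[\sigma_D]_d$.

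I would prove this by induction on $d$, and within a fixed degree by a leading-term argument driven by the observation preceding the statement. The case $d=0$ is trivial, since $P_1 = 1$. For $d=1$, the paper has already noted that $P_\fm = \sigma_{\delta(\fm)}$.

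For the inductive step, take $\fm \in \fM_{D,d+1}$ and let $\fm_0 \in \fM_{D,1}$ be the unique divisor of $\fm$ with $\delta(\fm_0)$ lexicographically maximal, so $\fm = \fm_0 \fm_1$ with $\fm_1 \in \fM_{D,d}$. I would compare $P_\fm$ with $\sigma_{\delta(\fm_0)} P_{\fm_1}$, which lies in $A[\sigma_D]_{d+1}$ by induction. To keep track of terms I would order $\fM_{D,d+1}$ by the invariant $\fm \mapsto \delta(\fm_0)$, refined by some well-order on the finitely many $\fm_1$, or more simply by the finite lexicographic data of the sequence of successive maximal divisors. The claim to verify is that
\[
\sigma_{\delta(\fm_0)} P_{\fm_1} - c\,P_\fm
\]
is a combination of $P_{\fm'}$ with $\fm'$ strictly larger in this order, where $c$ is a positive integer. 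Expanding the product gives symmetric monomials $\fm_0' \fm_1'$ with $\delta(\fm_0') = \delta(\fm_0)$. For each such monomial, its maximal degree-one divisor has $\delta$ at least $\delta(\fm_0)$, and equality forces a match with the structure of $\fm$. Since there are finitely many monomials, a downward induction along the order then expresses every $P_\fm$ through the $\sigma$'s.

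The main obstacle is the coefficient $c$, which must be invertible: a priori it counts multiplicities and could be a non-unit in $A$. I would try to arrange the order, for instance by peeling off maximal divisors repeatedly and comparing with a product $\sigma_{\beta_1}\cdots\sigma_{\beta_{d+1}}$, so that the leading monomial $\fm$ appears with coefficient exactly $1$. The uniqueness of $\fm_0$ is what should guarantee this: any factorization of $\fm$ through the $\sigma$ expansion must use the maximal divisor first. Since everything is defined over $\bZ$ with unitriangular transition, the result then holds over any ring $A$ by base change.
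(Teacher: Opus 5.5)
Your route is the paper's: peel off the unique lexicographically maximal degree-one divisor $\fm_0$ and compare $P_\fm$ with $\sigma_{\delta(\fm_0)}P_{\fm_1}$. The coefficient $c$ is not where it breaks. Uniqueness of $\fm_0$ forces any factorisation $\fm=\tau(\fm_0)\rho(\fm_1)$ to have $\tau(\fm_0)=\fm_0$, so $c=1$.

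What fails is the claim that every other orbit occurring in the product is strictly larger. Those other orbits are the $\fS_D\cdot\fm_0\fm_1'$ with $\fm_1'\in\fS_D\cdot\fm_1$. Whenever $\fm_0$ is still the maximal divisor of $\fm_0\fm_1'$, such an orbit has the same $\delta(\fm_0)$ as $\fm$ and a cofactor in the same orbit as $\fm_1$. Since $\delta$ and the maximal-divisor construction are $\fS_D$-equivariant, it also has the same sequence of successive maximal $\delta$'s, so your order cannot separate it from $\fm$.

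Refining by an arbitrary order on the $\fm_1$ does not help either. The product $\sigma_{\delta(\fm_0)}P_{\fm_1}$ depends only on $\delta(\fm_0)$ and the orbit of $\fm_1$. So all these $\fm$ share one product, each occurring in it with coefficient $1$, and the triangularity you need can hold for at most one of them.

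Concretely, take $n=D=2$ and let $e_s$ be the unit vectors of $\bN^3$. Put $\fm=X_0^{e_0+e_1}X_1^{e_1+e_2}$ and $\fm'=X_0^{e_0+e_2}X_1^{2e_1}$. Both have maximal divisor $X_0^{e_0}X_1^{e_1}$ and cofactor in the orbit of $X_0^{e_1}X_1^{e_2}$, and $\sigma_{e_0+e_1}\sigma_{e_1+e_2}=P_\fm+P_{\fm'}$. Here $P_\fm=\sigma_{2e_1}\sigma_{e_0+e_2}$ happens to come from a product not of your form, but nothing of that sort works in general.

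The paper's proof takes the same route and has the same hole. It asserts $\delta(\fn_0)<\delta(\fm_0)$ for $\fn_0=\tau(\fm_0)$. But $\delta$ is $\fS_D$-invariant, so in fact $\delta(\fn_0)=\delta(\fm_0)$ and $\delta(\fn)=\delta(\fm)$, and the added induction on $\delta(\fm)$ has nothing to act on.

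The hole cannot be patched, because the statement is false as written. Take $n=2$, $D=3$, $d=2$ over a field $A$:
\begin{itemize}
\item The homogeneous symmetric polynomials of degree $2$ have the basis $\{P_\fm\}$ indexed by the $\fS_3$-orbits in $\fM_{3,2}$. These orbits are the $3$-element multisets of the $6$ quadratic monomials in three variables, so the space has dimension $\binom{8}{3}=56$.
\item On the other hand, $A[\sigma_3]_2$ is spanned by the products $\sigma_\alpha\sigma_\beta$ with $\alpha,\beta\in\bN_3$. Since $|\bN_3|=10$, there are only $\binom{11}{2}=55$ of them.
\end{itemize}
In characteristic $0$ this is the non-surjectivity of the Hermite--Hadamard--Howe map $\mathrm{Sym}^2(\mathrm{Sym}^3V)\to\mathrm{Sym}^3(\mathrm{Sym}^2V)$ for $\dim V=3$. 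So only the easy inclusion in your first paragraph holds in this generality.
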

\begin{proof}
  One inclusion is clear.
  For the other, it will suffice to show that $P_\fm \in \bZ[\sigma_D]_d$ for every $\fm \in \fM_{D,d}$, by induction on $d$.
  The case where $d = 0$ is clear.

  For the induction step, let $\fm \in \fM_{D,d+1}$.
  Choose a decomposition $\fm = \fm_0 \fm_1$ where $\fm_0 \in \fM_{D,1}$, $\fm_1 \in \fM_{D,d}$, and moreover, $\delta(\fm_0)$ is greatest possible in the lexicographical order on $\bN_D$.
  By the induction hypothesis, $P_{\fm_1} \in \bZ[\sigma_D]_d$.
  Since $P_{\fm_0} = \sigma_{\delta(\fm_0)}$, it will suffice to show that $P_{\fm_0} P_{\fm_1} - P_\fm \in \bZ[\sigma_D]_{d+1}$.

  By maximality of $\delta(\fm_0)$, the only monomial occurring in $P_{\fm_0}$ and dividing $\fm$ is $\fm_0$, so $\fm$ occurs in $P_{\fm_0} P_{\fm_1}$ exactly once, as $\fm_0 \fm_1$.
  Consequently, each monomial of $P_\fm$ occurs in $P_{\fm_0} P_{\fm_1}$ exactly once.
  Consider a monomial $\fn$ that occurs in the difference $P_{\fm_0} P_{\fm_1} - P_\fm$ -- that is to say that it occurs in $P_{\fm_0} P_{\fm_1}$ but not in $P_\fm$.
  Possibly replacing with another monomial in the same orbit $\fS_D \cdot \fn$, we have $\fn = \fn_0 \fm_1$, where $\fn_0 = \tau(\fm_0) \neq \fm_0$.
  Therefore $\delta(\fn_0) < \delta(\fm_0)$ and $\delta(\fn) < \delta(\fm)$.
  Adding an induction on $\delta(\fm)$, we may assume that $P_\fn \in \bZ[\sigma_D]_{d+1}$ for every such $\fn$, and the proof is complete.
\end{proof}

\subsection{Splittable polynomials}

\begin{dfn}
  \label{dfn:Splittable}
  Let $A$ be a ring (always commutative, with unit), $D > 0$, and
  \begin{gather*}
    g(X^*) = \sum_{\alpha \in \bN_D} a_\beta {X^*}^\alpha \in A[X^*]_D.
  \end{gather*}
  We say that $g$ is \emph{splittable} if its tuple of coefficients $a = (a_\alpha : \alpha \in \bN_D)$ satisfies every polynomial relation over $\bZ$ satisfied by $\sigma_D$.

  For $D = 0$, we consider every $g \in A[X^*]_0 = A$ to be splittable.
\end{dfn}

Equivalently, for $D \in \bN$, $g \in A[X^*]_D$ is splittable if and only if there exists a ring morphism $\varphi\colon \bZ[Y,\sigma_D] \rightarrow A$ that sends $Y G_D$ to $g$, where $G_D$ is as per \autoref{eq:GD}.
The splittability condition is homogeneous, so the zero polynomial in $A[X^*]_D$ is splittable.

\begin{conv}
  \label{conv:PointsAsPolynomials}
  Let $A$ be a ring (often, but not necessarily always, a field).
  There exists an obvious bijection between points $x \in A^{n+1}$ and linear homogeneous polynomials $X^* \cdot x \in A[X^*]_1$.
  We are going to identify them, so, in particular, by a product of points $\prod_{i<D} x_i$ we mean the polynomial $\prod_{i<D} \, (X^* \cdot x_i) \in A[X^*]_D$.
  We may do the same with any fixed distinguished copy of $X^*$ (e.g., $X^*_\ell$ in \autoref{sec:ChowForm}).
\end{conv}

With this convention, if $K$ is algebraically closed and $g \in K[X^*]_D$ splittable, then splits $g$ as $y \prod_{i<D} x_i$, with $y \in K$.
If $D > 0$, then we may require that $y = 1$ (but this does not make the choice of $x_i$ unique), and if $D = 0$, then $g = y$.

Thus, a non-zero splittable $g$ determines the multi-set (i.e., set with multiplicities, or a zero-dimensional chain) $[g] = \bigl\{ [x_i] : i < D \bigr\} \subseteq \bP^n(K)$.
Conversely, every non-empty finite multi-set in $\bP^n(K)$ is of the form $[g]$ for a splittable non-zero $g$ over $K$, unique up to a factor in $K^\times$.

\begin{dfn}
  \label{dfn:Wedge}
  Let $d,D \in \bN$.
  Let $T^* = (T^*_\alpha : \alpha \in \bN_d)$ be indeterminates representing the coefficients of a polynomial of degree $d$
  \begin{gather*}
    F(X) = \sum_{\alpha \in \bN_d} T^*_\alpha X^\alpha \in \bZ[T^*][X]_d.
  \end{gather*}
  Let $G_D(X^*) \in \bZ[\sigma_D][X^*]_D \subseteq \bZ[X_{<D}][X^*]_D$ be as per \autoref{eq:GD}.
  We define
  \begin{gather*}
    F \wedge G_D = \prod_{i<D} F(X_i).
  \end{gather*}
  Each coefficient in $F \wedge G_D$ of a monomial in $T^*$ is a symmetric homogeneous polynomial of degree $d$ in $X_{<D}$, so $F \wedge G_D \in \bZ[T^*,\sigma_D]$.

  Consider now a ring $A$, a polynomial $f \in A[X]_d$ and a splittable polynomial $g(X^*) \in A[X^*]_D$.
  Then there exists a morphism $\varphi\colon \bZ[T^*,Y,\sigma_D] \rightarrow A$ that sends $F \mapsto f$ and $YG_D \mapsto g$, and we define $f \wedge g = \varphi(Y)^d \varphi(F \wedge G_D) \in A$.
\end{dfn}

It is easy to check that the wedge operation is well defined, even though the homomorphism that sends $YG_D \mapsto g$ is not, in general, unique.
If $D > 0$, then we can make $\varphi$ unique by requiring that $\varphi(Y) = 1$, in which case $f \wedge g = \varphi(F \wedge G_D)$.
If $D = 0$, then $\varphi$ is unique, $\varphi(Y) = g \in A$, and $f \wedge g = g^d$.
Similarly, if $d = 0$, then $f \wedge g = f^D$; and if $d = D = 0$, then $f \wedge g = 1$.
As usual for polynomial expressions, we follow the convention that $0^0 = 1$.

The wedge operation is, by construction, functorial: if $\psi\colon A \rightarrow B$ is a morphism, and $f,g$ are as in \autoref{dfn:Wedge}, then $\psi(g) \in B[X^*]_D$ is splittable, and $\psi(f) \wedge \psi(g) = \psi(f \wedge g) \in B$.
If $g$ splits as $y \prod x_i$ over $A$ (or, by functoriality, over an extension of $A$), then $f \wedge g = y^d \prod f(x_i)$.

\begin{lem}
  \label{lem:Wedge}
  Let $f \in A[X]_d$ and $g \in A[X^*]_D$, with $g$ splittable.
  Then
  \begin{enumerate}
  \item
    \label{item:SplittableFactor}
    If $g = g_1 g_2$ and both $g_i$ are splittable, then so is $g$.
    If $A$ is an integral domain and $g \neq 0$, then the converse holds as well.
  \item
    \label{item:WedgeProduct}
    If $f = f_1 f_2$, or $g = g_1g_2$, and both $g_i$ are splittable, then
    \begin{gather*}
      (f_1 f_2) \wedge g = (f_1 \wedge g)(f_2 \wedge g),
      \qquad
      f \wedge (g_1 g_2) = (f \wedge g_1)(f \wedge g_2).
    \end{gather*}
  \item
    \label{item:WedgeAddition}
    Assume that $A$ is an integral domain and $g \neq 0$.
    Let $h$ be an indeterminate polynomial of degree $d$ (i.e., with indeterminate coefficients, that we may adjoin to $A$).
    Then $f$ vanishes at every linear factor of $g$ (over the algebraic closure of $\Frac(A)$) if and only if
    \begin{gather*}
      (f + h) \wedge g = h \wedge g.
    \end{gather*}
  \end{enumerate}
\end{lem}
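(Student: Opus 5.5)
For \autoref{item:SplittableFactor}, the forward direction is a universal computation. Since $G_{D_1}(X^*)\,G_{D_2}(X^*)$, with $G_{D_1}$ built from $X_{<D_1}$ and $G_{D_2}$ from the next $D_2$ copies, is exactly $G_{D_1+D_2}$, the ring map $\bZ[Y,\sigma_{D_1+D_2}] \to \bZ[Y_1,Y_2,\sigma_{D_1},\sigma_{D_2}]$ is well defined. Here $\sigma_{D_1+D_2}$ is sent to the corresponding coefficients of $G_{D_1}G_{D_2}$, and $Y$ is sent to $Y_1Y_2$. These coefficients are polynomials in $\sigma_{D_1},\sigma_{D_2}$, and any relation satisfied by $\sigma_{D_1+D_2}$ is satisfied by its images, because $\bZ[\sigma_{D_1},\sigma_{D_2}] \subseteq \bZ[X_{<D_1+D_2}]$ and the images are literally the $\sigma_{D_1+D_2}$. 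Composing with the morphisms $\varphi_i$ that witness splittability of $g_i$ yields a morphism sending $YG_{D_1+D_2} \mapsto g_1g_2$. The cases $D_i = 0$ are trivial scalar multiplications.

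For the converse, I would pass to $K$, the algebraic closure of $\Frac(A)$. There $g$ splits as $y\prod_{i<D} x_i$ with $y \neq 0$. Since $K[X^*]$ is a UFD and each $X^*\cdot x_i$ is irreducible, $g_1$ and $g_2$ are each a unit times a subproduct of the $x_i$, hence split over $K$. Split over $K$ means splittable over $K$, via the map $\sigma_D \mapsto$ coefficients of $\prod x_i$. Splittability is a condition on the coefficients, namely the vanishing of polynomial relations, and $A \subseteq K$ is injective, so $g_1$ and $g_2$ are splittable over $A$. The one point to check is that splittability over $K$ gives splittability over $A$. This holds because the definition is in terms of relations among coefficients, not in terms of existence of a morphism over $A$.

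For \autoref{item:WedgeProduct}, the first identity is immediate universally: $(F_1F_2)\wedge G_D = \prod_i F_1(X_i)F_2(X_i)$, and the $Y$-powers $Y^{d_1+d_2} = Y^{d_1}Y^{d_2}$ also match. This must be done with $F_1,F_2$ generic of degrees $d_1,d_2$, and then specialized by functoriality. For the second identity, I would use the same universal ring as in \autoref{item:SplittableFactor}. Namely, $F \wedge G_{D_1+D_2} = \prod_{i<D_1+D_2} F(X_i)$, which is the product of the two partial products, with $Y^d = Y_1^dY_2^d$. Applying the composed morphism, and using that $f\wedge g$ is independent of the chosen $\varphi$ (as remarked after \autoref{dfn:Wedge}), gives the claim.

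For \autoref{item:WedgeAddition}, I would adjoin the coefficients of $h$ and work over $K$, the algebraic closure of $\Frac(A[\text{coeffs of } h])$, where $g = y\prod x_i$ with $y \neq 0$. Then
\begin{gather*}
  (f+h)\wedge g = y^d\prod_i \bigl(f(x_i)+h(x_i)\bigr), \qquad h\wedge g = y^d\prod_i h(x_i).
\end{gather*}
If every $f(x_i)=0$, these are equal. Conversely, suppose the two sides are equal. Cancel $y^d \neq 0$, and view both sides as polynomials in the indeterminate coefficients of $h$ over the algebraic closure of $\Frac(A)$, where the $x_i$ live. Each $h(x_i)$ is a nonzero linear form in these indeterminates, since $x_i \neq 0$. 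By unique factorization the multisets of irreducible factors agree, so the factors $h(x_i)+f(x_i)$ must each be associates of some $h(x_j)$. Comparing the degree-zero parts forces $f(x_i)=0$ for every $i$. I expect this step to be the main obstacle. The easiest route is an induction on $D$: substitute a specialization of $h$ making $h(x_0)=0$, using that $h$ is generic. If $f(x_0) \neq 0$, set instead $h = -f + h'$ at $x_0$ so that the left side vanishes, contradicting the nonvanishing of the right side. A cleaner way is to compare the lowest homogeneous components in the indeterminates: the left side's degree-$0$ part is $\prod f(x_i)$, and in general one peels off one factor at a time.
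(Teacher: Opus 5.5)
Your proposal is correct and follows the paper's (very terse) proof: the forward direction of item~(1) and both identities of item~(2) are universal computations using $G_{D_1}G_{D_2} = G_{D_1+D_2}$ and $\prod_i F_1(X_i)F_2(X_i)$, the converse of item~(1) comes from splitting $g$ over the algebraic closure of the fraction field of $A$, and item~(3) follows from $f\wedge g = y^d\prod_i f(x_i)$ together with your unique-factorisation comparison of the linear factors, which is already a complete argument (so the speculative alternatives at the end can be dropped). The one step you leave implicit is that $\varphi_1$ and $\varphi_2$ combine into a single morphism on $\bZ[Y_1,Y_2,\sigma_{D_1},\sigma_{D_2}]$; this holds because that ring is $\bZ[Y_1,\sigma_{D_1}]\otimes_{\bZ}\bZ[Y_2,\sigma_{D_2}]$, since the two pieces live in disjoint sets of variables and are torsion-free, hence flat, over $\bZ$.
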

\begin{proof}
  For \autoref{item:SplittableFactor}, one direction is clear, and for the converse, split $g$ over $\Frac(A)^a$.
  Items \autoref{item:WedgeProduct} and \autoref{item:WedgeAddition} are immediate from the definitions.
\end{proof}

\subsection{Alternating polynomials}

Let $A$ be a ring, and $\ell \in \bN$.
Let $(X^*_i : i \leq \ell)$ consist of $\ell+1$ copies of $X^*$.
At this stage we \emph{could} work with copies of $X$ rather than of $X^*$, and with $\ell$ many copies rather than $\ell+1$, but this setting fits better with the manner in which alternating polynomials are used later on.

\begin{dfn}
  \label{dfn:Alternating}
  A polynomial $g \in A[X^*,Y^*]$ is \emph{alternating} if, for a new indeterminate $Z$,
  \begin{gather}
    \label{eq:Alternating}
    g(X^*, Y^*) = g(X^* + Z Y^*,Y^*) = g(X^*, Y^* + Z X^*).
  \end{gather}
  A polynomial $g \in A[X^*_i : i \leq \ell]$ is alternating if it is as a polynomial in $X^*_i$ and $X^*_j$ for every $i \neq j$.
  We say that $g$ is \emph{homogeneous} of degree $d$ as an alternating polynomial, if it is so as a polynomial in $X^*_\ell$ over $A[X^*_i : i < \ell]$.

  If we want to make $\ell$ explicit, we say that $g$ is alternating \emph{in dimension $\ell$}.
  We may sometimes allow $\ell = -1$, in which case we consider every $g \in A[X^*_i : i \leq -1] = A$ to be alternating, of any desired degree.
\end{dfn}

If $g$ is alternating, then successive applications of the definition yield
\begin{gather}
  \label{eq:AlternatingExchange}
  g(X^*,Y^*)
  = g(X^* + Y^*,Y^*)
  = g(X^* + Y^*,-X^*)
  = g(Y^*,-X^*).
\end{gather}


\begin{lem}
  \label{lem:AlternatingFactor}
  A product of alternating polynomials in $A[X^*_i : i \leq \ell]$ is again alternating.
  If $A$ is an integral domain, then every factor of an alternating polynomial is also alternating.
\end{lem}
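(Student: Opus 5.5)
The argument reduces to the two-variable case. Being alternating in $A[X^*_i : i \leq \ell]$ means being alternating in $(X^*_i, X^*_j)$ for each pair $i \neq j$, with the other variables treated as coefficients. So it suffices to prove both claims for polynomials $g \in B[X^*,Y^*]$, where $B = A[X^*_k : k \neq i,j]$. This $B$ is an integral domain whenever $A$ is.

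\textbf{Products.} For a fixed $z$-substitution, consider the ring endomorphisms $\theta_1, \theta_2$ of $B[Z][X^*,Y^*]$ over $B[Z]$. They are given by $\theta_1 \colon X^* \mapsto X^* + ZY^*,\ Y^* \mapsto Y^*$ and $\theta_2 \colon X^* \mapsto X^*,\ Y^* \mapsto Y^* + ZX^*$. Then $g$ is alternating exactly when $\theta_1(g) = g = \theta_2(g)$. Since the $\theta_k$ are ring morphisms, the set of fixed points is a subring, so products of alternating polynomials are alternating. This is immediate.

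\textbf{Factors.} Let $g = g_1 g_2$ be alternating and nonzero (if $g = 0$, a factor need not be alternating, so I assume $g \neq 0$, presumably the intended reading). Then $\theta_1(g_1)\theta_1(g_2) = g_1 g_2$ in the domain $B[Z][X^*,Y^*]$. The plan is to use unique factorization together with degree considerations. First reduce to the case where $A$, hence $B$, is a UFD, or at least work over $\Frac(B)$ and its algebraic closure. Note that $\theta_1$ is an automorphism with inverse $X^* \mapsto X^* - ZY^*$, and it preserves total degree in $(X^*,Y^*)$. The cleaner route is to specialize $Z$ to elements $z$ of $B$ (or of an infinite extension): this gives the automorphisms $\theta_{1,z}$ of $K[X^*,Y^*]$, with $K$ the algebraic closure of $\Frac(B)$, forming the additive group $\{\theta_{1,z}\}$. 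This group acts on the finite set of irreducible factors of $g$ up to associates, which it permutes.

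The key step is that a connected group ($\mathbb{G}_a$, a unipotent group) cannot nontrivially permute a finite set. Concretely, I would argue with the generic $Z$ instead. Over $L = \overline{\Frac(B(Z))}$, the prime factors $p$ of $g$ in $K[X^*,Y^*]$ remain products of factors over $L$, and $\theta_1(p)$ is a prime of $L[X^*,Y^*]$ dividing $g$. So $\theta_1(p)$ equals $c\,q$ with $q$ among finitely many factors of $g$ defined over $K$ and $c \in L^\times$. Setting $Z = 0$ shows $q \sim p$. Hence $\theta_1(p) = c(Z)\,p$, and comparing a coefficient of top degree shows $c \in K(Z)$. In fact $c$ is a unit polynomial in $Z$, because $\theta_1$ maps polynomials in $Z$ to polynomials in $Z$ and $\theta_1^{-1}$ likewise. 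So $c$ is a constant in $K^\times$, and then $Z = 0$ gives $c = 1$. Thus each irreducible factor over $K$ is fixed by $\theta_1$, and similarly by $\theta_2$, so it is alternating. Now $g_1$ is, up to a scalar in $K^\times$, a product of such factors, and it is alternating by the product part; the scalar is harmless since the $\theta_k$ are linear. Being alternating is then a polynomial identity with coefficients in $B$, so it descends from $K$ to $B$ and then to $A[X^*_i]$.

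The main obstacle is the step showing $\theta_1(p) = c\,p$ with $c$ constant. That is, one must rule out the factor being moved by the unipotent action, and keep track of the fields $K$ versus $L$ carefully. The alternative I would try first is degree-based: compare the leading forms in $Z$. Writing $\theta_1(g_1) = \sum_k Z^k h_k$ and $\theta_1(g_2)$ similarly, the product has $Z$-degree $0$. Since we are in a domain, the $Z$-leading coefficients multiply to a nonzero element, which forces both $Z$-degrees to be $0$. Hence $\theta_1(g_1) = h_0 = g_1$, since setting $Z = 0$ gives $g_1$. This is much simpler and avoids factorization entirely, so that is the argument I would actually write.
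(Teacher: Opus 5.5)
Your final degree argument is essentially the paper's proof: $\theta_1(g_1)\,\theta_1(g_2)=g$ has $Z$-degree $0$, degrees add over the domain $B[X^*,Y^*]$, so $\theta_1(g_1)$ is free of $Z$ and equals $g_1$ after setting $Z=0$. The same goes for your reduction to a single pair of variable groups, and your caveat that the alternating polynomial must be non-zero is a correct refinement the paper leaves implicit; the factorisation and unipotent-action detour is unnecessary and should simply be dropped.
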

\begin{proof}
  The first assertion is immediate.
  For the second, it is enough to consider the case where $f,g \in A[X^*,Y^*]$ and $h = fg$ is alternating.
  Then $Z$ cannot appear in the development of $f(X^*+Z Y^*,Y^*)$, since the latter is a factor of $h(X^*,Y^*)$.
  Substituting $Z = 0$, we see that $f$ is alternating.
\end{proof}

If $g \in A[X^*,Y^*]$ is homogeneous in each of $X^*$ and $Y^*$, then \autoref{eq:Alternating} is equivalent to the same with $Z = 1$.
Assume that $g \in A[X^*,Y^*]$ is alternating, though not necessarily homogeneous.
Let $g_d$ be its homogeneous component of degree $d$ in $X^*$, and let $g_{d,e}$ be the homogeneous component of $g_d$ of degree $e$ in $Y^*$, so $g = \sum_d g_d = \sum_{d,e} g_{d,e}$.
Stratifying \autoref{eq:Alternating} according to homogeneous degree in $X^*,Z$, we must have $g_d(X^* + Z Y^*,Y^*) = g_d(X^*,Y^*)$ for each $d$.
Stratifying the latter according to homogeneous degree in $X^*,Y^*$, we then obtain $g_{d,e}(X^* + Z Y^*,Y^*) = g_{d,e}(X^*,Y^*)$.
Applying the same logic with $X^*$ and $Y^*$ exchanged, we see that $g_{d,e}$ must satisfy \autoref{eq:Alternating}.
Therefore (by \autoref{eq:AlternatingExchange}) $g_{d,e} = 0$ if $d \neq e$, and $g = \sum g_{d,d}$ is a sum of homogeneous alternating polynomials.
The same applies for an alternating polynomial in arbitrarily many groups of indeterminates: it is the sum, over $d$, of alternating polynomials that are homogeneous of degree $d$ in each group separately.

\begin{dfn}
  \label{dfn:AlternatingSharp}
  Let $m \geq 1$, and let $g \in A[X^*_{<m}]$ be alternating and homogeneous of degree $d$.
  Let $X^*_m$ be an additional copy of $X^*$, and let $Y = (Y_i : i \leq m)$ be additional indeterminates.
  We define
  \begin{gather*}
    g^\sharp(X^*_{\leq m},Y)
    = g\left( \frac{X^*_i}{Y_i} - \frac{X^*_m}{Y_m} : i < m \right) \prod_{i\leq m} Y_i^d.
  \end{gather*}
\end{dfn}

\begin{lem}
  \label{lem:AlternatingSharp}
  Let $g \in A[X^*_{<m}]$ be alternating and homogeneous of degree $d$.
  \begin{enumerate}
  \item The expression $g^\sharp(X^*_{\leq m},Y)$, as defined in \autoref{dfn:AlternatingSharp}, is a polynomial in $A[X^*_{\leq m},Y]$ (and not merely a rational function).
  \item
    \label{item:AlternatingSharpAlternate}
    For every $j \leq m$:
    \begin{gather*}
      g^\sharp(X^*_{\leq m},Y)
      = (-1)^{(m-j)d} g\left( X^*_i - \frac{Y_i}{Y_j} X^*_j : i \leq m, \ i \neq j \right) Y_j^d.
    \end{gather*}
  \item As a polynomial in $Y$ over $A[X^*_{\leq m}]$, $g^\sharp$ is homogeneous of degree $d$, and the coefficient of $Y_i^d$ is $\pm g(X^*_{\neq i})$ (or, to be precise, $(-1)^{(m-i)d} g(X^*_0,\ldots,X^*_{i-1},X^*_{i+1},\ldots,X^*_{})$).
  \item As a polynomial in $X^*_{\leq m}$ over $A[Y]$, $g^\sharp$ is homogeneous of degree $dm$.
  \item Letting $W^*_i = (X^*_i,Y_i)$ for $i \leq m$, the polynomial $g^\sharp \in A[W^*_{\leq m}]$ is alternating and homogeneous of degree $d$ in the family $(W_i : i \leq m)$.
  \item
    \label{item:AlternatingSharpIrreducible}
    If $A$ is a unique factorisation domain and $g$ is irreducible in $A[X^*_{<m}]$, then $g^\sharp$ is irreducible in $A[X^*_{\leq m},Y]$.
  \end{enumerate}
\end{lem}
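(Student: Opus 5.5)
The plan is to prove item~\autoref{item:AlternatingSharpAlternate} first and derive the other items from it. The basic tool is the identity $g(X^*_0,\ldots,X^*_{m-1}) = g(\ldots, X^*_i + c X^*_j,\ldots)$ for $i \neq j$ and $c$ a scalar or rational function. This holds for the homogeneous alternating $g$ by \autoref{eq:Alternating}: substitute $Z \mapsto c$ after working over a localisation. We also use \autoref{eq:AlternatingExchange} for transpositions, which gives $g(\ldots,Y^*,\ldots,X^*,\ldots) = g(\ldots,X^*,\ldots,-Y^*,\ldots)$, so a transposition costs a factor $(-1)^d$ by homogeneity.

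For item~\autoref{item:AlternatingSharpAlternate}, fix $j<m$ and set $V_i = X^*_i/Y_i - X^*_m/Y_m$ for $i<m$. In the argument list $(V_i)_{i<m}$, subtract $V_j$ from every other entry; the entries become $X^*_i/Y_i - X^*_j/Y_j$ for $i \neq j$, while entry $j$ stays $V_j = X^*_j/Y_j - X^*_m/Y_m$. Negate entry $j$, which multiplies by $(-1)^d$, so that it becomes $X^*_m/Y_m - X^*_j/Y_j$. Then move it cyclically to the last position, using $m-1-j$ transpositions and hence a sign $(-1)^{(m-1-j)d}$. The total sign is $(-1)^{(m-j)d}$.

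Next pull out scalars by homogeneity in each slot: $g(\ldots, (X^*_i - (Y_i/Y_j)X^*_j)/Y_i,\ldots)$ equals $\prod_{i\neq j} Y_i^{-d}$ times the same expression without the $1/Y_i$. Multiplying by $\prod_{i\le m} Y_i^d$ leaves exactly $Y_j^d$, which gives the formula. The case $j=m$ is the same computation with no reordering and sign $+1$.

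The remaining items follow from this.
\begin{enumerate}
\item \emph{Polynomiality.} By item~\autoref{item:AlternatingSharpAlternate} with $j$ fixed, $g^\sharp \in A[X^*,Y][Y_j^{-1}]$ for each $j$. Since $Y_0,Y_1$ are non-associate primes in the polynomial ring (if $m\ge1$ there are at least two $Y$'s), we get $A[\ldots][Y_0^{-1}] \cap A[\ldots][Y_1^{-1}] = A[\ldots]$. Some care is needed if $A$ is not a domain, but monomial bookkeeping works: a Laurent polynomial whose $Y_0$-exponents and $Y_1$-exponents are all nonnegative is a polynomial.
\item \emph{Coefficients in $Y$.} Expand item~\autoref{item:AlternatingSharpAlternate} with $j=i$. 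The terms involve $Y_j^{d-k}$ times degree-$k$ expressions in the other $Y$'s, so the total degree is $d$. The only monomial of the form $Y_i^d$ comes from the constant part, namely $(-1)^{(m-i)d} g(X^*_{\neq i})$.
\item \emph{Degree in $X^*$.} Homogeneity of degree $dm$ in $X^*_{\le m}$ is clear from the definition.
\item \emph{Alternation in $W^*$.} The substitution $W_i \mapsto W_i + Z W_k$ changes $X^*_i/Y_i$ awkwardly, so the cleanest route is to verify \autoref{eq:Alternating} using the $j$-form with $j=k$. There the arguments are $X^*_i - (Y_i/Y_k) X^*_k$, which are unchanged under $W_i \mapsto W_i + Z W_k$. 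The substitution $W_k \mapsto W_k + Z W_i$ acts similarly once we use the form with $j = $ a third index, or with $j=i$. I expect this to need a short but fiddly case check when $m=1$, where there is no third index and \autoref{eq:AlternatingExchange} must be used instead. Homogeneity of degree $d$ in $W^*_m$ follows from item~\autoref{item:AlternatingSharpAlternate} with $j\ne m$.
\end{enumerate}

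Irreducibility, item~\autoref{item:AlternatingSharpIrreducible}, is the main obstacle. After inverting $Y_m$ the map $X^*_i \mapsto X^*_i - (Y_i/Y_m)X^*_m$ is an automorphism of $A[X^*_{\le m},Y][Y_m^{-1}]$. Hence $g^\sharp = g(\text{new variables})\,Y_m^d$ is, up to the unit $Y_m^d$, the image of $g$, which is irreducible in the polynomial ring $A[X'^*_{<m}, X^*_m, Y]$ and stays irreducible after inverting $Y_m$. So any factorisation of $g^\sharp$ has one factor that is a unit times a power of $Y_m$. It remains to show $Y_m \nmid g^\sharp$. This follows from the coefficient of $Y_0^d$ being $\pm g(X^*_{\ne 0}) \ne 0$ with no $Y_m$; when $m=0$ is excluded this is fine. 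Finally, units of $A$ that are not constants cannot appear, and any constant factor in $A$ dividing $g^\sharp$ divides its coefficient $\pm g$, which is excluded by irreducibility of $g$ (primitivity).
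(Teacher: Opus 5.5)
Your proof is correct. Items (1)--(5) follow the paper. The same moves (subtract the $j$-th argument from the others, negate it, cycle it to the end, pull each $Y_i$ out of its slot) produce the $j$-form, polynomiality comes from comparing $j=0$ and $j=1$, and the degree and coefficient statements are read off. For alternation, which the paper leaves implicit, the $m=1$ case check you anticipate is not needed. For the substitution $W^*_a \mapsto W^*_a + ZW^*_b$ you can always use the $j$-form with $j=b$, which exists for every pair $a \neq b$. There the arguments $X^*_p - (Y_p/Y_b)X^*_b$ and the factor $Y_b^d$ are visibly unchanged, and since $Y_b$ is fixed, the substitution extends to the localisation at $Y_b$.

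For irreducibility you take a genuinely different route.

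\emph{The paper} writes $fh = g^\sharp$ and normalises leading terms in $Y_m$ using the irreducible coefficient $g(X^*_{<m})$ of $Y_m^d$. It then uses homogeneity to get $h \in A[Y]_{d-k}$. It rules out $k<d$ by taking a projective zero $[z]$ of $h$ over the algebraic closure of $\Frac(A)$ and invoking genericity of the forms $X^*_i - (z_i/z_j)X^*_j$. It finishes with a content argument.

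\emph{You} note instead that in $S[Y_m^{-1}]$, with $S = A[X^*_{\leq m},Y]$, one has $g^\sharp = \phi(g)\,Y_m^d$ for an automorphism $\phi$, so $g^\sharp$ is prime there. The units of $S[Y_m^{-1}]$ are $A^\times Y_m^{\bZ}$, so any factorisation in $S$ splits off at most a unit times a power of $Y_m$. Finally $Y_m \nmid g^\sharp$ by the $Y_0^d$-coefficient.

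Your argument is shorter and purely algebraic; the paper's genericity step is essentially a pointwise version of the invertibility of $\phi$. It also disposes of constant factors automatically, so your closing remark about primitivity is unnecessary. The paper's version mainly has the merit of matching the geometric style of the rest of the note.
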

\begin{proof}
  Let $j < m$.
  Subtracting $X^*_j/Y_j - X^*_m/Y_m$ from all the other arguments in the definition of $g^\sharp$ we do not change the result.
  Replacing $X^*_j/Y_j - X^*_m/Y_m$ with $X^*_m/Y_m - X^*_j/Y_j$ multiplies by $(-1)^d$, and pushing it to the end multiplies by another $(-1)^{(m-j-1)d}$.
  Finally, factoring each $Y_i^d$, for $i \neq j$, into the corresponding argument of $g$, we obtain \autoref{item:AlternatingSharpAlternate}.
  In particular, $g^\sharp \in A[X^*_{\leq m},Y,Y_j^{-1}]$, and considering $j = 0,1$ we conclude that $g^\sharp \in A[X^*_{\leq m},Y]$.
  The homogeneity properties and calculation of the coefficients of $Y_i^d$ follow.

  Assume now that $A$ is a unique factorisation domain and $g \in A[X^*_{<m}]$ is irreducible.
  Let $f,h \in A[X^*_{\leq m},Y]$ be such that $fh = g^\sharp = g(X^*_{<m}) Y_m^d + \cdots$.
  Since $g(X^*_{<m})$ is irreducible, we may assume that $f = g(X^*_{<m}) Y_m^k + \cdots$ and $h = Y_m^{d-k} + \cdots$.
  Then $f$ must be homogeneous of degree $dm$ in $X^*_{\leq m}$, and $h \in A[Y]_{d-k}$.

  Let $K$ denote the algebraic closure of $\Frac(A)$.
  If $k < d$, then $h$ admits a zero $[z] \in \bP^m(K)$, say with $z_j \neq 0$.
  But then the family of $X^*_i - z_i X^*_j/z_j$, for $i \neq j$, is generic over $K$, so $g^\sharp(X^*_{\leq m},z) \neq 0$, a contradiction.
  Therefore $k = d$, so $h \in A$, and it divides all the coefficients of $g^\sharp$.
  Since $g(X^*_{<m})$ is irreducible, $h$ is a unit of $A$.
  This completes the proof.
\end{proof}

Let us consider the indeterminates $X^*_i$ as the rows of an indeterminate $(\ell+1) \times (n+1)$ matrix denoted $\bX^*$.

\begin{lem}
  \label{lem:AlternatingDeterminant}
  A polynomial $g \in A[X^*_i : i \leq \ell] = A[\bX^*]$ is alternating and homogeneous (as such) of degree $d$ if and only if, for any $(\ell+1) \times (\ell+1)$ matrix $M$, possibly over a larger ring, we have
  \begin{gather}
    \label{eq:AlternatingDeterminant}
    g(M \bX^*) = \det M^d g(\bX^*).
  \end{gather}
\end{lem}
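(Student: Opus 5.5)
The converse direction (\autoref{eq:AlternatingDeterminant} implies alternating and homogeneous of degree $d$) I would handle by specialising $M$. Take $M = I + Z E_{ij}$ for $i \neq j$, an elementary matrix over $A[Z]$ with determinant $1$. Then $M\bX^*$ replaces the row $X^*_i$ by $X^*_i + Z X^*_j$ and leaves the other rows fixed, so \autoref{eq:AlternatingDeterminant} is exactly \autoref{eq:Alternating} for the pair $(i,j)$. This holds for all ordered pairs, so $g$ is alternating. Next take $M = \operatorname{diag}(1,\ldots,1,Z)$ over $A[Z]$. Then \autoref{eq:AlternatingDeterminant} reads $g(X^*_{<\ell}, Z X^*_\ell) = Z^d g(\bX^*)$, which is homogeneity of degree $d$ in $X^*_\ell$ over $A[X^*_{<\ell}]$, as required by \autoref{dfn:Alternating}.

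For the forward direction, assume $g$ is alternating and homogeneous of degree $d$. The plan is to reduce to a generic matrix. It suffices to prove \autoref{eq:AlternatingDeterminant} when $M = \bM$ has indeterminate entries $M_{ij}$, working over $A[\bM]$; the general case then follows by specialising $\bM \mapsto M$ in any larger ring. Both sides are polynomials, so it is enough to prove the identity after inverting $\det\bM$, that is, in $B = A[\bM][\det \bM^{-1}]$, where $\bM$ lies in $GL_{\ell+1}(B)$.

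The first ingredient is the degree $d$ homogeneity in every row. Alternation lets us swap rows up to sign, by \autoref{eq:AlternatingExchange}, so homogeneity of degree $d$ in $X^*_\ell$ transfers to each $X^*_i$. Hence for any diagonal $D$ over any ring, $g(D\bX^*) = \det D^d g(\bX^*)$. Transvections $I + cE_{ij}$, with $c$ in any ring, preserve $g$ by \autoref{eq:Alternating} specialised at $Z = c$. The set of matrices $M$ satisfying \autoref{eq:AlternatingDeterminant} (over a fixed ring $B$) is closed under products, because $\det$ is multiplicative: $g(MN\bX^*) = \det M^d g(N\bX^*)$ is obtained by substituting $N\bX^*$ for $\bX^*$.

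The main obstacle is that $GL_{\ell+1}(B)$ need not be generated by elementary and diagonal matrices for a general ring $B$. I would avoid this by localising further to a field. Since $A[\bM]$ need not be a domain, I would instead use the fact that $g$ has coefficients in $A$: the identity is a polynomial identity in $\bM$ over $\bZ[\text{coefficients}]$ applied to $\bX^*$. Concretely, one can write $\bM$ as a product of elementary and diagonal matrices with entries in $A[\bM]$ localised at the leading principal minors $\Delta_1,\ldots,\Delta_{\ell+1}$. This is Gaussian elimination without pivoting, an $LDU$ decomposition whose factors have entries in $A[\bM][\Delta_k^{-1}]$. There the identity holds by the closure property. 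Since the $\Delta_k$ are nonzerodivisors in the polynomial ring $A[\bM]$ (each is a polynomial with a unit coefficient, namely the product of the diagonal variables), the localisation map is injective. Hence the identity holds in $A[\bM][\bX^*]$, and by specialisation for every $M$.
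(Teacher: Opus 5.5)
Your proof is correct. The converse direction coincides with the paper's (elementary matrices over $A[Z]$), but the forward direction takes a genuinely different route.

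The paper argues by induction on $\ell$, for a matrix whose entry $a_{\ell,\ell}$ is taken to be an indeterminate:
\begin{itemize}
\item it multiplies the first $\ell$ rows by $a_{\ell,\ell}$, which costs a factor $a_{\ell,\ell}^{\ell d}$ by homogeneity;
\item it clears the last column by subtracting multiples of the last row, which is free by alternation;
\item it factors the resulting matrix as $\operatorname{diag}(N,1)\,M_3$, with $M_3$ lower triangular with diagonal $(1,\ldots,1,a_{\ell,\ell})$;
\item it applies the induction hypothesis to the $\ell \times \ell$ block $N$, and finally cancels $a_{\ell,\ell}^{\ell d}$.
\end{itemize}
In other words, it performs Gaussian elimination one column at a time without ever dividing, and the only non-zero-divisor it needs is a single indeterminate.

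You instead note that the matrices satisfying \autoref{eq:AlternatingDeterminant} form a multiplicative monoid containing all diagonal matrices and all transvections. You then perform the whole elimination at once, with denominators. The generic matrix $\mathbf{M}$ admits an $LDU$ factorisation over $A[\mathbf{M}][\Delta_1^{-1},\ldots,\Delta_{\ell+1}^{-1}]$. The leading principal minors $\Delta_k$ are non-zero-divisors, since the diagonal monomial of $\Delta_k$ has coefficient $1$, so McCoy's theorem applies (or a leading-term argument for a suitable lexicographic order). Hence the localisation is injective, and the identity descends to $A[\mathbf{M}][\bX^*]$.

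Your version makes the structure more transparent: the identity is a multiplicative condition to be checked on generators, and working generically sidesteps the failure of $\mathrm{GL}_{\ell+1}(B)$ to be generated by elementary and diagonal matrices. The price is quoting the $LDU$ decomposition over a commutative ring whose leading principal minors are units, whereas the paper's induction is entirely self-contained.

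The aside about localising to a field or reducing to universal coefficients plays no role and can be deleted; the concrete argument that follows it is complete.
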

\begin{proof}
  Assume first that \autoref{eq:AlternatingDeterminant} holds for every $M$.
  Considering it, in particular, for the elementary matrices that multiply $X^*_i$ by $Z$, or add $Z X^*_j$ to $X^*_i$, we see that $g$ is homogeneous of degree $d$ in each $X^*_i$, and that it is alternating.

  For the converse, we proceed by induction on $\ell$, where the case $\ell = 0$ is immediate.
  For $\ell > 0$, consider a matrix $M = (a_{i,j})$.
  Let $M_1$ (respectively, $M_2$) be the result of replacing each row $L_i$, for $i < \ell$, with $a_{\ell,\ell} L_i$ (respectively, $a_{\ell,\ell} L_i - a_{i,\ell} L_\ell$).
  Then
  \begin{gather*}
    M_2
    =
    \begin{pmatrix}
      N & 0 \\
      ? & a_{\ell,\ell}
    \end{pmatrix}
    =
    \begin{pmatrix}
      N & 0 \\
      0 & 1
    \end{pmatrix}
    M_3,
    \qquad
    M_3 =
    \begin{pmatrix}
      I_\ell & 0 \\
      ? & a_{\ell,\ell}
    \end{pmatrix},
  \end{gather*}
  and
  \begin{gather*}
    g(M_2 \bX^*) = g(M_1 \bX^*) = a_{\ell,\ell}^{\ell d} g(M \bX^*).
  \end{gather*}
  On the other hand, $g$ is a homogeneous alternating polynomial in just $(X^*_i : i < \ell)$, so by the induction hypothesis:
  \begin{gather*}
    g(M_2 \bX^*) = \det N^d g(M_3 \bX^*) = (a_{\ell,\ell} \det N)^d g(\bX^*).
  \end{gather*}
  Finally, $a_{\ell,\ell} \det N = \det M_2 = \det M_1 = a_{\ell,\ell}^\ell \det M$.
  We may take $a_{\ell,\ell}$ to be a new indeterminate, by which we may cancel, and we obtain \autoref{eq:AlternatingDeterminant}.
\end{proof}

For $\ell > n$, it follows that the only alternating polynomials are the constants.
If $\ell = n$, a homogeneous alternating polynomial must be of the form $a (\det \bX^*)^d$, so an alternating polynomial is just a polynomial in $\det \bX^*$.
For $\ell = 0$, every polynomial in $A[X^*_0]$ is alternating.

Consider the case where $\ell = n-1$.
Let $g \in A[X^*_i : i \leq \ell] = A[X^*_i : i < n]$ be alternating homogeneous of degree $d$ and $g^\sharp \in A[Y^*_i,Z_i : i \leq n]$.
Let $z$ be the (column) vector $(0,\ldots,0,1)$, and let $M$ be the matrix adjugate to the square matrix $\bY^*$.
Then $Mz$ is the vector of maximal rank minors of $(Y^*_i : i < n)$, which we write as $Y^*_0 \wedge \cdots \wedge Y^*_{n-1}$.
Therefore
\begin{gather*}
  g^\sharp(M\bY^*,Mz)
  = g^\sharp\bigl( \det(\bY^*) I, Y^*_0 \wedge \cdots \wedge Y^*_{n-1} \bigr)
  = \det(\bY^*)^{dn} g^\sharp( I, Y^*_0 \wedge \cdots \wedge Y^*_{n-1} ).
\end{gather*}
On the other hand, by alternation of $g^\sharp$,
\begin{gather*}
  g^\sharp(M\bY^*,Mz)
  = \det(M)^d g^\sharp(\bY^*,z)
  = \det(\bY^*)^{dn} g(Y^*_i : i < n).
\end{gather*}
The determinant of an indeterminate matrix is never a zero divisor (more generally, if $f \in A[X]$ is such that $a f \neq 0$ for all $a \in A \{0\}$, then $gf \neq 0$ for all $g \in A[X] \setminus \{0\}$).
Therefore, when $m = n-1$, an alternating homogeneous polynomial $g$ is necessarily a polynomial function of the maximal rank minors, given by
\begin{gather*}
  g(\bX^*) = g^\sharp(I, X^*_0 \wedge \cdots \wedge X^*_m).
\end{gather*}

In the general case, the minors of $\bX^*$ of maximal rank are linear alternating polynomials (by \emph{linear}, we mean in each $X^*_i$ separately).
Call them the \emph{elementary} alternating polynomials in $X^*_0,\ldots,X^*_m$.
Let $\brbinom{n+1}{m+1}$ denote the collection of $(m+1)$-element subsets of $n+1 = \{0,\ldots,n\}$, so the elementary alternating polynomials may be naturally indexed as $\fa = \bigl\{ \fa_s(\bX^*) : s \in \brbinom{n+1}{m+1} \bigr\} \subseteq \bZ[\bX^*]$.
Then one can show that for any $s \in \brbinom{n+1}{m+1}$, every alternating polynomial $g(\bX^*)$ belongs to the ring $A[\fa,\fa_s^{-1}]$, and is moreover integral in $A[\bX^*]$ over $A[\fa]$.
If $A$ is an integrally closed domain, then the set of alternating polynomials is exactly the integral closure of $A[\fa]$.

\section{Resultants}
\label{sec:Resultant}

In this section we work in a Weil-style algebraic geometric setting.
We fix a field $k$, together with an algebraically closed extension $L$ of infinite transcendence degree.
All points, linear forms, polynomials, and so on, are $L$-rational.

\begin{ntn}
  \label{ntn:DimDeg}
  If $X$ is a collection of projective or affine $L$-rational points, let us write $\dim_k X$ for the transcendence degree of $k(X)$ over $k$.
  If $\dim_k X = 0$, then $\deg_k X$ is the degree of the algebraic extension $k(X)/k$.

  If we want to consider the linear dimension of a vector space, we shall write $\lindim$ explicitly.
\end{ntn}

Each $L$-linear space of homogeneous polynomials $L[X]_d$ can be viewed as an affine variety, isomorphic to $L^{\binom{n+d}{n}}$.

\begin{ntn}
  \label{ntn:VStar}
  Let $\xi = [x] \in \bP^n(L)$.
  We denote by $V^*_d(\xi)$ the $L$-linear space of homogeneous polynomials of degree $d$ that vanish at $\xi$, namely
  \begin{gather*}
    V^*_d(\xi) = \{ f \in L[X]_d : f(x) = 0\}.
  \end{gather*}
  When $d = 1$, we may drop it.
\end{ntn}

\begin{conv}
  \label{conv:ResultantSetup}
  Throughout, we let $I$ denote a finite set of indices.
  A family indexed by $I$ will be denoted $a_I = (a_i : i \in I)$.
  We let $n_I = (n_i : i \in I)$ be a family of projective dimensions, and let $\bP^{n_I}(L) = \prod_{i \in I} \bP^{n_i}(L)$.
  We let $X_i$ be indeterminates homogeneous coordinates in $\bP^{n_i}$, and let $X^*_i$ be the corresponding dual indeterminates.

  For a family $\xi_I = (\xi_i : i \in I) \in \bP^{n_I}(L)$, we define
  \begin{gather*}
    V^*(\xi_I) = \prod_{i \in I} V^*(\xi_i) \subseteq \prod_{i \in I} L[X_i]_1.
  \end{gather*}
  This is an irreducible variety of dimension $\sum_{i \in I} n_i$, defined over $k(\xi_I)$.
\end{conv}

\begin{rmk}
  When we do not care to fix the projective dimension, we let
  \begin{gather*}
    \bP(L) = \bigcup_{n \in \bN} \bP^n(L),
    \qquad
    \text{so}
    \qquad
    \bP(L)^I = \bigcup_{n_I \in \bN^I} \bP^{n_I}(L).
  \end{gather*}
\end{rmk}

\begin{dfn}
  \label{dfn:Constrained}
  Let $\xi_I \in \bP(L)^I$ and let $\lambda_I \in V^*(\xi_I)$ be generic over $k(\xi_I)$.
  \begin{enumerate}
  \item We say that $\xi_I$ is \emph{unconstrained} over $k$ if $\lambda_I$ is algebraically free over $k$.
    Otherwise, $\xi_k$ is \emph{constrained} over $k$.
  \item We say that $\xi_I$ is \emph{singly constrained} if $\lambda_I$ satisfies a single irreducible algebraic relation over $k$.
  \item We say that $\xi_I$ is \emph{multiply constrained} if $\lambda_I$ satisfies several algebraic relations over $k$.
  \end{enumerate}
\end{dfn}

\begin{dfn}
  \label{dfn:Resultant}
  Let $\xi_I \in \bP(L)^I$ be singly constrained, and let $\lambda_I \in V^*(\xi_I)$ be generic over $k(\xi_I)$.
  Then we define $\fR^k_{\xi_I} \in k[X^*_I]$, the \emph{resultant} associated to $\xi_I$ over $k$, to be the unique irreducible algebraic relation over $k$ satisfied by $\lambda_I$.
\end{dfn}

\begin{conv}
  \label{conv:ResultantEquivalence}
  When $\xi_I$ is singly constrained over $k$, \autoref{dfn:Resultant} only determines $\fR^k_{\xi_I}$ up to a multiplicative factor in $k^\times$.
  We are going to say that two expressions involving resultants are \emph{equivalent}, denoted by $\equiv$, if it is possible to choose representatives (usually for several resultants, sometimes relative to different fields) so that equality holds.
\end{conv}

\begin{rmk}
  \label{rmk:ResultantReduction}
  Let $J \subseteq I$ and $I_0 = I \setminus J$.
  Assume that $\xi_I \in \bP(L)^I$, and that $\xi_J$ is unconstrained.
  Let $\lambda_I \in V^*(\xi_I)$ be generic, and let $k_1 = k(\lambda_J)$.
  \begin{enumerate}
  \item
    The family $\xi_{I_0}$ is (singly, multiply) constrained over $k_1$ if and only if $\xi_I$ is over $k$.
  \item
    \label{item:ResultantReductionIsomorphism}
    There exists a natural isomorphism $k(X^*_J) \simeq k_1$, identifying $X^*_j$ with the coefficients of $\lambda_j$.
  \end{enumerate}
  Assume moreover that $\xi_I$ is singly constrained over $k$, so $\xi_{I_0}$ is singly constrained over $k_1$.
  Then, under the identification of \autoref{item:ResultantReductionIsomorphism}, we have $k[X^*_I] \subseteq k_1[X^*_{I_0}]$, and $\fR^k_{\xi_I} \equiv \fR^{k_1}_{\xi_{I_0}}$ (that is to say that any instance of $\fR^k_{\xi_I}$ is also an instance of $\fR^{k_1}_{\xi_{I_0}}$, even though the first is well defined up to a factor in $k^\times$, and the second up to $k_1^\times$).
\end{rmk}

\begin{thm}
  \label{thm:ResultantSplitting}
  Let $\xi_I \in \bP(L)^I$, and let $\lambda_I \in V^*(\xi_I)$ be generic over $k(\xi_I)$.
  Let $i \in I$ and $J = I \setminus \{i\}$, and assume that $\xi_I$ is constrained over $k$ but $\xi_J$ is not.

  Then the family $\xi_I$ is singly constrained over $k$, and the projective point $\xi_i$ is algebraic over $k(\lambda_J)$ and rational over $k(\lambda_I)$.

  Moreover, let $\bigl\{ [x_t] : t < D_s \bigr\}$ be the distinct conjugates of $\xi$ over $k(\lambda_J)$, where $D_s = \bigl[ k(\xi_i,\lambda_J) : k(\lambda_J) \bigr]_s$ is the separable degree.
  Let $D_i = \bigl[ k(\xi_i,\lambda_J) : k(\lambda_J) \bigr]_i$ be the inseparable degree.
  Then
  \begin{gather*}
    \deg_{X^*_i} \fR^k_{\xi_I} = \bigl[ k(\xi_i,\lambda_J) : k(\lambda_J) \bigr] = D_s D_i
    \qquad \text{and} \qquad
    \fR^k_{\xi_I} \equiv \prod_{t<D_s} \, (X^*_i \cdot x_t)^{D_i}.
  \end{gather*}
\end{thm}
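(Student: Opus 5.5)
Write $\xi_i = [x]$ and $k_1 = k(\lambda_J)$. By hypothesis $\lambda_J$ is algebraically free over $k$, so by \autoref{rmk:ResultantReduction} the whole question reduces to studying $\lambda_i$ over $k_1$. Since $\xi_I$ is constrained, $\lambda_I$ is not algebraically free over $k$. The plan is to transfer this to a statement about $\xi_i$ over $k_1$, then identify the single relation.

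\emph{Step 1: transcendence degrees.} We have $\dim_k \lambda_J = \sum_{j\in J} n_j$. Also $\lambda_i$ is generic in $V^*(\xi_i)$ over $k(\xi_I,\lambda_J)$, hence over $k_1(\xi_i)$, so
\[
\dim_{k_1(\xi_i)} \lambda_i = n_i .
\]
Therefore $\dim_k \lambda_I = \sum_J n_j + n_i + \dim_{k_1(\lambda_i)}\xi_i - \dim_{k_1}\xi_i$. Here I count $\dim(\xi_i,\lambda_i)$ over $k_1$ in two ways: as $\dim_{k_1}\xi_i + n_i$, and as $\dim_{k_1}\lambda_i + \dim_{k_1(\lambda_i)}\xi_i$.

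Next, $\xi_i$ is rational over $k_1(\lambda_i)$. Indeed, $\xi_i$ is the unique point annihilated by generic hyperplanes through it: it is determined by the conditions $\lambda_i(x)=0$ together with the $k_1$-conjugacy data. More concretely, if $\xi_i$ is algebraic over $k_1$, then a generic $\lambda_i$ through $\xi_i$ vanishes at no other conjugate, which separates the conjugates. Constrainedness forces $\dim_k\lambda_I<\sum n$, so $\dim_{k_1}\lambda_i < n_i+1$. This forces $\dim_{k_1}\xi_i - \dim_{k_1(\lambda_i)}\xi_i \ge 1$.

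I would argue $\dim_{k_1}\xi_i=0$ as follows. If $\xi_i$ were transcendental over $k_1$, then its locus $W$ over $k_1$ has positive dimension. The generic hyperplane section argument gives $\dim_{k_1(\lambda_i)}\xi_i = \dim W - 1$, hence $\dim_{k_1}\lambda_i = n_i+1$ and $\lambda_i$ is unconstrained, a contradiction. So $\xi_i$ is algebraic over $k_1$, and $\dim_{k_1}\lambda_i = n_i$, meaning exactly one relation: $\xi_I$ is singly constrained. Rationality over $k_1(\lambda_i)$ then follows, up to purely inseparable issues, from the separation of conjugates above. I expect this step to be the main obstacle. To handle inseparability rigorously I would take $k_1(\xi_i)$ and argue that any $k_1(\lambda_i)$-automorphism fixes $\xi_i$. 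Then I would check the purely inseparable part by noting that $\lambda_i$ is generic over $k_1(\xi_i)$, so $k_1(\xi_i)$ and $k_1(\lambda_i)$ combine linearly disjointly in a way forcing $k_1(\xi_i)\subseteq k_1(\lambda_i)^{}$, using that $\xi_i$ is cut out by the linear equation together with its minimal data.

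\emph{Step 2: the formula.} Consider
\[
P = \prod_{t<D_s}(X^*_i\cdot x_t)^{D_i},
\]
with $x_t$ normalised so that $P$ has coefficients in $k_1$. This normalisation is possible since the multiset of conjugates is $\operatorname{Gal}$-stable, and raising to the power $D_i$ absorbs the purely inseparable part: the $D_i$-th powers of coordinate ratios lie in the separable closure of $k_1$. Then $P(\lambda_i)=0$ since $\lambda_i\cdot x=0$. Its total degree in $X^*_i$ is $D_sD_i=[k_1(\xi_i):k_1]$.

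On the other hand, $\fR^{k_1}_{\xi_i}$ is the minimal polynomial relation of $\lambda_i$ over $k_1$, and it vanishes on $V^*(\xi_i)$, hence on all $V^*(\xi_i^\sigma)$. So every factor $X^*_i\cdot x_t$ divides it; by inseparability considerations each divides it to power $D_i$. This is because the irreducible factor over $k_1$ containing $X^*_i\cdot x_t$ is, over the algebraic closure, the full product $P$: the factor is $k_1$-rational, Galois-stable, and has its inseparable exponent forced. Irreducibility of $P$ over $k_1$ follows from transitivity of Galois on the conjugates together with minimality of $D_i$. Hence $\fR^{k_1}_{\xi_i}\equiv P$, and \autoref{rmk:ResultantReduction} transfers this to $\fR^k_{\xi_I}$. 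The degree equality then follows immediately.
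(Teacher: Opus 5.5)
There is a genuine gap, and it sits exactly at the inseparability step you flagged. It cannot be closed, because that part of the statement is false. Your Galois argument (a generic hyperplane through $\xi_i$ contains no other $k_1$-conjugate) only shows that $k_1(\lambda_i,\xi_i)/k_1(\lambda_i)$ is purely inseparable. The further claim that linear disjointness forces $k_1(\xi_i)\subseteq k_1(\lambda_i)$ does not hold. Likewise in Step~2, the multiplicity with which $X^*_i\cdot x_t$ divides $\fR^{k_1}_{\xi_i}$ is not $D_i$. It is the least power $q$ of the characteristic such that $(X^*_i\cdot x_t)^q$, with a coordinate of $x_t$ normalised to $1$, has coefficients in $k_1^{\mathrm{sep}}$; this is the exponent of inseparability of $k_1(\xi_i)/k_1$. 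One always has $q\mid D_i$, with equality exactly when the purely inseparable part of $k_1(\xi_i)/k_1$ is simple. When $q<D_i$, your $P$ equals $(\fR^{k_1}_{\xi_i})^{D_i/q}$ up to a scalar and is reducible, so ``minimality of $D_i$'' is precisely what fails.

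Concretely, take $k=\mathbf{F}_p(s,t)$, $I=\{i\}$ and $\xi=[x]$ with $x=(s^{1/p},t^{1/p},1)$, so $D_s=1$ and $D_i=[k(\xi):k]=p^2$. A generic $\lambda=(a,b,c)\in V^*(\xi)$ has $c=-(as^{1/p}+bt^{1/p})$. Hence $\lambda$ is a zero of $(X^*\cdot x)^p$, whose coefficients $s,t,1$ lie in $k$. This polynomial is irreducible over $k$, since for $0<m<p$ the form $(X^*\cdot x)^m$ has a coefficient $m s^{1/p}\notin k$. Thus $\deg \fR^k_\xi=p\neq p^2$. Moreover $[k(\lambda):k(a,b)]=p<p^2=[k(a,b,\xi):k(a,b)]$, so $\xi$ is not $k(\lambda)$-rational either. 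The same happens over the perfect field $\mathbf{F}_p$ with $J\neq\emptyset$: take $\xi_j=[1:u]$, $\xi_{j'}=[1:v]$ for independent transcendentals $u,v$, and $\xi_i=[u^{1/p}:v^{1/p}:1]$.

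The paper's proof breaks at the same point. It asserts $k_1[z]=k_1[y]$ with $k_1=k(Y^*)$, i.e.\ that the generic linear combination $z=-Y^*\cdot y$ generates $k_1(\xi)$ over $k_1$. In the example, $k_1=k(a,b)$, $z=c$, and $[k_1(z):k_1]=p<p^2$.

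What survives is the following: $\xi_I$ is singly constrained, $\xi_i$ is algebraic over $k(\lambda_J)$ and purely inseparable over $k(\lambda_I)$, and $\fR^k_{\xi_I}\equiv\prod_{t<D_s}(X^*_i\cdot x_t)^q$, of degree $D_sq$ in $X^*_i$. In the separable case ($q=D_i=1$, e.g.\ characteristic $0$), your Step~2 is a valid alternative to the paper's route. You obtain $P\mid\fR^{k_1}_{\xi_i}$ from Galois-stability of its zero set and conclude by irreducibility, whereas the paper computes the minimal polynomial of $z$ over $k(Y^*)$ directly.

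Finally, fix the bookkeeping in Step~1. Since $\lambda_J$ is free, $\dim_k\lambda_J=\sum_{j\in J}(n_j+1)$. The two $\dim\xi_i$ terms in your formula have swapped signs. Constrainedness gives $\dim_{k_1}\xi_i=\dim_{k_1(\lambda_i)}\xi_i$, not a difference $\geq 1$. With these corrections, the standard incidence-correspondence argument (every hyperplane meets a positive-dimensional $W$) justifies your generic-hyperplane claim, and hence algebraicity.
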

\begin{proof}
  By \autoref{rmk:ResultantReduction}, we may assume that $I = \{i\}$, so $\xi_I = \xi_i = \xi$ and $\lambda_I = \lambda_i = \lambda$.
  We may also assume that $\xi = [x]$, where $x = (y,1)$.
  Accordingly, say that $X^* = (Y^*,Z^*)$, where $Z^*$ is a singleton.

  Let $k_1 = k(Y^*)$ and $z = - Y^* \cdot y \in k_1(\xi) = k_1(y)$, and $k_1[z] = k_1[y] = k_1(\xi)$.
  Then $(Y^*,z) \cdot x = 0$, and since $\xi$ is constrained over $k$, $(Y^*,z)$ cannot be free over $k$.
  Therefore, $z$ is algebraic over $k_1$, and so is $\xi$.
  Since $Y^*$ is generic over $k(\xi)$, $\xi$ is algebraic over $k$, and the extension $k_1(z)/k_1$ has the same separable and inseparable degrees as $k(\xi)/k$.

  Let $[x_t] = [y_t,1]$, for $t < D_s$, be the distinct $k$-conjugates of $\xi$, so $z_t = - Y^* \cdot y_t$ are the distinct $k_1$-conjugates of $z$.
  Then the irreducible polynomial of $z$ over $k_1$ is
  \begin{gather*}
    \fR
    = \prod_{t<D_s} \, (Z^* - z_t)^{D_i}
    = \prod_{t<D_s} \, (X^* \cdot x_t)^{D_i}.
  \end{gather*}
  It follows that $\fR \in k(X^*) \cap k^a[X^*] = k[X^*]$.
  In addition, being irreducible in $k(Y^*)[Z^*]$, and not having any factor involving $Y^*$ alone, it is irreducible in $k[X^*]$.

  On the one hand, $\fR$ is irreducible and $\fR(\lambda) = \prod \lambda(x_t) = 0$.
  On the other hand,
  \begin{gather*}
    n
    = \dim_{k(\xi)} \lambda
    \leq \dim_k \lambda.
  \end{gather*}
  Therefore, $\fR$ is the unique irreducible algebraic relation over $k$ satisfied by $\lambda$.
  In other words, $\xi$ is singly constrained over $k$ and $\fR^k_\xi \equiv \fR$.
  In addition, we saw that $\xi$ is $k(Y^*,z)$-rational.
  Since $\mu(X) = (Y^*,-z) \cdot X$ is another generic point of $V^*(\xi)$, $\xi$ is also $k(\lambda)$-rational.
\end{proof}

\begin{cor}
  \label{cor:SinglyConstrainedOne}
  Let $\xi_I \in \bP^{n_I}(L)$ be constrained over $k$.
  Then the following are equivalent:
  \begin{enumerate}
  \item The family $\xi_I$ is singly constrained.
  \item The family $\xi_I$ admits a least constrained sub-family.
  \item There exists $i \in I$ such that $\xi_{I \setminus \{i\}}$ is unconstrained.
  \end{enumerate}
\end{cor}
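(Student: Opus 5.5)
The plan is to prove the cycle (3)$\Rightarrow$(1)$\Rightarrow$(2)$\Rightarrow$(3). Here a \emph{least} constrained sub-family means a constrained $\xi_J$, $J \subseteq I$, such that every constrained sub-family $\xi_{J'}$ satisfies $J \subseteq J'$. Throughout, write $N = \sum_{i\in I} (n_i+1)$ for the dimension of the ambient space $\prod_i L[X_i]_1$. Let $\lambda_I \in V^*(\xi_I)$ be generic over $k(\xi_I)$. For any $J \subseteq I$, the sub-tuple $\lambda_J$ is generic in $V^*(\xi_J)$ over $k(\xi_J)$, so it can be used to test whether $\xi_J$ is constrained.

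Two consequences follow at once. First, constrainedness is upward closed: a relation satisfied by $\lambda_J$ is also a relation satisfied by $\lambda_I$. Second, the number of independent relations is governed by $\dim_k \lambda_I$. Being singly constrained means the locus of $\lambda_I$ over $k$ is a hypersurface, that is, $\dim_k \lambda_I = N-1$. Being unconstrained means $\dim_k \lambda_I = N$.

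(3)$\Rightarrow$(1) is exactly the first assertion of \autoref{thm:ResultantSplitting}.

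(2)$\Rightarrow$(3): let $\xi_J$ be the least constrained sub-family. Since $\xi_J$ is constrained, $J$ is nonempty; pick $j \in J$. Then $I\setminus\{j\}$ does not contain $J$, so $\xi_{I\setminus\{j\}}$ is unconstrained.

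(1)$\Rightarrow$(2) is the main step. Since $\xi_I$ is constrained and $I$ is finite, choose a minimal constrained sub-family $\xi_J$. For each $j \in J$, the family $\xi_{J\setminus\{j\}}$ is unconstrained by minimality. \autoref{thm:ResultantSplitting} applied to $\xi_J$ then gives that $\xi_j$ is algebraic over $k(\lambda_{J\setminus\{j\}})$.

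The key claim is that $\xi_{I\setminus\{j\}}$ is unconstrained for every $j \in J$. Suppose instead that it is constrained. Then $\dim_k \lambda_{I\setminus\{j\}} \le N-(n_j+1)-1$. Moreover, $\xi_j$ is algebraic over $k(\lambda_{I\setminus\{j\}})$, and $\lambda_j$ is a point of $V^*(\xi_j)$, a linear space of dimension $n_j$ defined over $k(\xi_j)$. Hence $\lambda_j$ has transcendence degree at most $n_j$ over $k(\lambda_{I\setminus\{j\}})$. Adding these bounds gives $\dim_k \lambda_I \le N-2$. This contradicts single constraint, which requires $\dim_k\lambda_I = N-1$.

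Granting the claim, any constrained sub-family $\xi_{J'}$ must contain every $j\in J$, since otherwise $J' \subseteq I\setminus\{j\}$ would be constrained, contradicting upward closure applied to the unconstrained $\xi_{I\setminus\{j\}}$. Hence $J \subseteq J'$, and $\xi_J$ is least.

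I expect the main obstacle to be this transcendence-degree count. The points to check are the precise meaning of $\dim_k$ for the locus of $\lambda_I$, namely $N$ minus the number of independent relations, so that singly constrained really gives $\dim_k\lambda_I = N-1$. One also has to check that algebraicity of $\xi_j$ over the smaller field $k(\lambda_{J\setminus\{j\}})$ transfers to the larger field $k(\lambda_{I\setminus\{j\}})$; this holds because $J\setminus\{j\} \subseteq I\setminus\{j\}$.
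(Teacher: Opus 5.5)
Your proof is correct. Your (3)$\Rightarrow$(1) and (2)$\Rightarrow$(3) are the same as the paper's; your (1)$\Rightarrow$(2) takes a genuinely different route.

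The paper takes $J$ to be the set of $i \in I$ such that $X^*_i$ occurs in $\fR^k_{\xi_I}$, and calls it clear that $\xi_J$ is least. The reason is divisibility. If $\xi_{J'}$ is constrained, some irreducible relation $Q \in k[X^*_{J'}]$ of $\lambda_{J'}$ is also an irreducible relation of $\lambda_I$. By single constraint, $Q \equiv \fR^k_{\xi_I}$, so the variables of $\fR^k_{\xi_I}$ lie in $X^*_{J'}$, i.e.\ $J \subseteq J'$.

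You instead start from a minimal constrained sub-family. You use \autoref{thm:ResultantSplitting} to make each $\xi_j$ algebraic over $k(\lambda_{I\setminus\{j\}})$. You then run a transcendence-degree count against $\dim_k \lambda_I = N-1$. The count is sound, and the translation you flag is standard: a nonzero principal prime of $k[X^*_I]$ has height one, and, by the converse, a height-$\geq 2$ prime contains two non-associate irreducibles. This translation, together with \autoref{thm:ResultantSplitting}, is extra machinery. The paper's argument uses only the definition of singly constrained and unique factorisation.

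What your route buys is a direct, dimension-theoretic proof of (1)$\Rightarrow$(3): removing any index of a minimal constrained sub-family destroys the constraint. What the paper's route buys is an explicit description of the least constrained sub-family as the variable support of the resultant. This is the form in which the fact is used later, e.g.\ in \autoref{lem:ResultantAbstract} and in the remark after \autoref{ntn:MultiWedge} that $X^*_i$ does not occur in $\fR^k_{\xi_I}$ when $\xi_{I\setminus\{i\}}$ is constrained.
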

\begin{proof}
  \begin{cycprf}
  \item Let $J$ consist of all $i \in I$ such that $X^*_i$ occurs in $\fR^k_{\xi_I}$.
    Then it is clear that $\xi_J$ is the least constrained sub-family of $\xi_I$.
  \item If $\xi_J$ is the least constrained sub-family, then it is, in particular, constrained, so $J \neq \emptyset$, and then $\xi_{I \setminus \{i\}}$ is unconstrained for any $i \in J$.
  \item[\impfirst]
    By \autoref{thm:ResultantSplitting}.
  \end{cycprf}
\end{proof}

\begin{cor}
  \label{cor:SinglyConstrained}
  Let $\xi_I \in \bP(L)^I$, $i \in I$ and $J = I \setminus \{i\}$.
  Assume that $\xi_J$ is unconstrained over $k$, and let $\lambda_I \in V^*(\xi_I)$ be generic over $k(\xi_I)$.
  Then the following are equivalent:
  \begin{enumerate}
  \item The family $\xi_I$ is constrained over $k$.
  \item The family $\xi_I$ is singly constrained over $k$.
  \item The projective point $\xi_i$ is algebraic over $k(\lambda_J)$.
  \end{enumerate}
\end{cor}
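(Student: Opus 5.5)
The plan is to prove the cycle (1)$\Rightarrow$(2)$\Rightarrow$(3)$\Rightarrow$(1), relying almost entirely on \autoref{thm:ResultantSplitting} and \autoref{rmk:ResultantReduction}.

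For (1)$\Rightarrow$(2): since $\xi_I$ is constrained and $\xi_J$ is not, the hypotheses of \autoref{thm:ResultantSplitting} hold. That theorem directly gives that $\xi_I$ is singly constrained. The same theorem also gives (1)$\Rightarrow$(3), since it asserts that $\xi_i$ is algebraic over $k(\lambda_J)$. So it remains to show (2)$\Rightarrow$(1), which is trivial because singly constrained implies constrained by definition, and then (3)$\Rightarrow$(1).

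For (3)$\Rightarrow$(1), I would set $k_1 = k(\lambda_J)$. Since $\xi_J$ is unconstrained, $\lambda_J$ is algebraically free over $k$. Moreover, $\lambda_i$ is generic in $V^*(\xi_i)$ over $k(\xi_I,\lambda_J)$, so in particular over $k_1(\xi_i)$. By \autoref{rmk:ResultantReduction}, $\xi_I$ is constrained over $k$ if and only if $\xi_i$ is constrained over $k_1$. We now use that $\xi_i$ is algebraic over $k_1$, and that $\lambda_i$ ranges over a linear space of dimension $n_i$ defined over $k_1(\xi_i)$. This gives
\begin{gather*}
  \dim_{k_1} \lambda_i = \dim_{k_1(\xi_i)} \lambda_i = n_i < n_i + 1,
\end{gather*}
so $\lambda_i$, which has $n_i+1$ coordinates, is not algebraically free over $k_1$. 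Hence $\xi_i$ is constrained over $k_1$, and therefore $\xi_I$ is constrained over $k$.

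The only step needing care is checking that genericity transfers correctly in the reduction. That is, $\lambda_i$ must be generic in $V^*(\xi_i)$ over $k_1(\xi_i)$, and the equality $\dim_{k_1(\xi_i)}\lambda_i = n_i$ must hold. Both follow from $\lambda_I$ being generic in the irreducible product variety $V^*(\xi_I)$ over $k(\xi_I)$, by additivity of transcendence degree. I expect no real obstacle beyond this bookkeeping.
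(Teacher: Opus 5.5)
Your proposal is correct and follows the paper's proof. The forward implications come straight from \autoref{thm:ResultantSplitting}, and (3)$\Rightarrow$(1) is exactly the paper's one-line computation $\dim_{k(\lambda_J)} \lambda_i = \dim_{k(\lambda_J,\xi_i)} \lambda_i = n_i < n_i+1$. Your detour through \autoref{rmk:ResultantReduction} is harmless but unnecessary: this computation, together with the freeness of $\lambda_J$ over $k$, already shows directly that $\lambda_I$ is not algebraically free over $k$.
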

\begin{proof}
  \begin{cycprf}
  \item[\impnnext]
    By \autoref{thm:ResultantSplitting}.
  \item[\impfirst]
    Since then $\dim_{k(\lambda_J)} \lambda_i = \dim_{k(\lambda_J,\xi_i)} \lambda_i = n_i < n_i+1$.
  \end{cycprf}
\end{proof}

\begin{cor}
  \label{cor:Unconstrained}
  A family of projective points $\xi_I \in \bP^{n_I}(L)$ is unconstrained over $k$ if and only if $\dim_k \xi_J \geq |J|$ for all $J \subseteq I$.
\end{cor}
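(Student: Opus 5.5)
We prove both directions by induction on $|I|$, using \autoref{cor:SinglyConstrained} to add one point at a time.

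\emph{Necessity.} Assume $\xi_I$ is unconstrained and fix $J \subseteq I$. Let $\lambda_I \in V^*(\xi_I)$ be generic over $k(\xi_I)$. Then $\lambda_J$ is generic in $V^*(\xi_J)$ over $k(\xi_J)$, and $\lambda_J$ is algebraically free over $k$, so $\dim_k \lambda_J = \sum_{j\in J}(n_j+1)$. On the other hand,
\begin{gather*}
  \dim_k \lambda_J \leq \dim_k \xi_J + \dim_{k(\xi_J)} \lambda_J = \dim_k \xi_J + \sum_{j \in J} n_j .
\end{gather*}
Comparing the two gives $\dim_k \xi_J \geq |J|$.

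\emph{Sufficiency.} Assume $\dim_k \xi_J \geq |J|$ for all $J \subseteq I$, and argue by induction on $|I|$; the case $I = \emptyset$ is trivial. Suppose for contradiction that $\xi_I$ is constrained. Every proper sub-family satisfies the same hypothesis, so by induction every proper sub-family is unconstrained. Hence for each $i \in I$, \autoref{cor:SinglyConstrained} applies with $J = I\setminus\{i\}$: $\xi_i$ is algebraic over $k(\lambda_J)$, where $\lambda_I$ is generic over $k(\xi_I)$. We will show this forces $\dim_k \xi_I \leq |I|-1$, contradicting the hypothesis with $J = I$.

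To get the dimension bound, fix $i \in I$. Since $\xi_i$ is algebraic over $k(\lambda_J)$, and by \autoref{thm:ResultantSplitting} rational over $k(\lambda_I)$, we have $\dim_k \lambda_I = \dim_k(\lambda_I,\xi_I)$. The main obstacle is to compare this with the dimension of the incidence variety: the transcendence degree of $(\xi_I,\lambda_I)$ over $k$ equals $\dim_k \xi_I + \sum_{i\in I} n_i$. On the other hand, $\xi_I$ constrained means $\dim_k \lambda_I \leq \sum_{i\in I}(n_i+1) - 1$. Hence
\begin{gather*}
  \dim_k \xi_I + \sum_{i\in I} n_i = \dim_k \lambda_I \leq \sum_{i\in I} n_i + |I| - 1,
\end{gather*}
so $\dim_k \xi_I \leq |I|-1$, which is the desired contradiction.

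This last computation in fact does not need the induction, nor \autoref{thm:ResultantSplitting} beyond the equality $\dim_k(\lambda_I,\xi_I) = \dim_k \lambda_I$. What must be checked for an arbitrary constrained family is that $\xi_I$ is algebraic over $k(\lambda_I)$. This is where the minimality coming from the induction enters. Choose a minimal constrained sub-family $\xi_K$; by \autoref{cor:SinglyConstrainedOne} and the minimality, $\xi_{K\setminus\{i\}}$ is unconstrained for every $i \in K$. So by \autoref{thm:ResultantSplitting} each $\xi_i$, $i \in K$, is rational over $k(\lambda_K)$. Running the computation above with $K$ in place of $I$ gives $\dim_k \xi_K \leq |K|-1$, contradicting the hypothesis applied to $J = K$. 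This version avoids any trouble with non-minimal families, and it is the argument we will write.
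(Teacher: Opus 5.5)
Your proposal is correct and is essentially the paper's proof. For necessity, you compare $\dim_k \lambda_J = \sum_{j\in J}(n_j+1)$ with $\dim_k(\xi_J,\lambda_J) = \dim_k \xi_J + \sum_{j \in J} n_j$. For sufficiency, you pass to a minimal constrained sub-family, where \autoref{thm:ResultantSplitting} makes the whole family $k(\lambda)$-rational, so the same count gives $\dim_k \xi_K < |K|$. Your two versions of sufficiency (induction on $|I|$ and choosing a minimal constrained sub-family) are one argument, since the induction hypothesis is exactly what makes $\xi_I$ minimally constrained, so you only need to write one of them.
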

\begin{proof}
  Let $\lambda_I \in V^*(\xi_I)$ be generic over $k(\xi_I)$.
  Then
  \begin{gather*}
    \dim_k \, \xi_I,\lambda_I
    = \dim_k \, \xi_I
    + \dim_{k(\xi_I)} \, \lambda_I
    = \dim_k \, \xi_I
    + \sum_{i \in I} n_i.
  \end{gather*}

  Assume first that $\xi_I$ is unconstrained, so $\lambda_I$ is algebraically free over $k$.
  Then
  \begin{gather*}
    \sum_{i \in I} \, (n_i+1)
    = \dim_k \, \lambda_I
    \leq \dim_k \, \xi_I, \lambda_I
    = \dim_k \, \xi_I + \sum_{i \in I} n_i.
  \end{gather*}
  Therefore, $\dim_k \, \xi_I \geq |I|$.
  The same logic applies to any sub-family $\xi_J$.

  For the converse, we may assume that $\xi_I$ is a minimal constrained family.
  Let $i \in I$, and let $J = I \setminus \{i\}$.
  By minimality, $\xi_J$ is unconstrained, and by \autoref{thm:ResultantSplitting}, $\xi_i$ is $k(\lambda_I)$-rational.
  In other words, the entire family $\xi_I$ is $k(\lambda_I)$-rational.
  Then
  \begin{gather*}
    \dim_k \, \lambda_I
    = \dim_k \, \xi_I,\lambda_I
    = \dim_k \, \xi_I
    + \sum_{i \in I} n_i.
  \end{gather*}
  On the other hand, $\dim_k \, \lambda_I < \sum_{i \in I} \, (n_i+1)$, so $\dim_k \, \xi_I < |I|$.
\end{proof}

\begin{lem}
  \label{lem:ResultantZeros}
  Assume that $\xi_I \in \bP(L)^I$ is singly constrained over $k$, and let $\mu_I$ be a family of linear forms in the corresponding indeterminates.
  Then $\fR^k_{\xi_I}(\mu_I) = 0$ if and only if there exists a specialisation $\zeta_I$ of $\xi_I$ over $k$ such that $\mu_I \in V^*(\zeta_I)$.
\end{lem}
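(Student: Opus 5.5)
The plan is to read both directions off the identification of the locus of $\lambda_I$ over $k$ with the hypersurface $\fR^k_{\xi_I} = 0$, and to use the fact that specialisations of projective points can always be extended (completeness of projective space).

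\emph{Step 1: the locus of $\lambda_I$.} Let $\lambda_I \in V^*(\xi_I)$ be generic over $k(\xi_I)$. Since $\xi_I$ is singly constrained, the prime ideal $\fp \subseteq k[X^*_I]$ of polynomial relations satisfied by $\lambda_I$ over $k$ contains an irreducible element, namely $\fR^k_{\xi_I}$, and $(\fR^k_{\xi_I})$ is a height one prime of the UFD $k[X^*_I]$. I read the hypothesis that $\lambda_I$ satisfies a \emph{single} irreducible relation as saying that $\fp$ contains no further independent relation, i.e.\ $\fp$ has height one. It then follows that $\fp = (\fR^k_{\xi_I})$. Equivalently, $\dim_k \lambda_I = \sum_i (n_i+1) - 1$; in the minimal case this can be checked against \autoref{thm:ResultantSplitting} together with \autoref{rmk:ResultantReduction}. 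Consequently, a family of linear forms $\mu_I$ satisfies $\fR^k_{\xi_I}(\mu_I) = 0$ if and only if $\mu_I$ is a specialisation of $\lambda_I$ over $k$.

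\emph{Step 2: the ``if'' direction.} Let $\zeta_I$ be a specialisation of $\xi_I$ with $\mu_I \in V^*(\zeta_I)$. I first extend $\xi_I \to \zeta_I$ to a specialisation $(\xi_I,\lambda_I) \to (\zeta_I,\mu'_I)$, where $\mu'_I \in V^*(\zeta_I)$ is generic over $k(\zeta_I,\mu_I)$. This is possible because $\lambda_I$ is generic in $V^*(\xi_I)$, the fibres $V^*(\cdot)$ are linear spaces of the same dimension $\sum n_i$, and their defining equations are the incidence relations $X^*_i \cdot x_i = 0$. Then $\mu'_I \to \mu_I$ over $k(\zeta_I)$ is a specialisation, since $\mu'_I$ is generic in the irreducible variety $V^*(\zeta_I)$. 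Composing, $\mu_I$ is a specialisation of $\lambda_I$ over $k$. Hence $\fR^k_{\xi_I}(\mu_I) = 0$, since the relation $\fR^k_{\xi_I}(\lambda_I) = 0$ over $k$ is preserved under specialisation. Step 1 is not even needed in this direction.

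\emph{Step 3: the ``only if'' direction.} Suppose $\fR^k_{\xi_I}(\mu_I) = 0$. By Step 1, $\lambda_I \to \mu_I$ is a specialisation over $k$. Since the $\xi_i$ are projective points, this specialisation extends to $(\lambda_I,\xi_I) \to (\mu_I,\zeta_I)$ for some $\zeta_I \in \bP^{n_I}(L)$; this is the standard extension theorem for projective coordinates, i.e.\ properness of $\bP^{n_I}$. The incidence relations $\lambda_i(\xi_i) = 0$ are bihomogeneous over $k$, so they are preserved, giving $\mu_i(\zeta_i) = 0$ for all $i$. Thus $\mu_I \in V^*(\zeta_I)$, and $\zeta_I$ is a specialisation of $\xi_I$ over $k$.

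The main obstacle is Step 1, namely justifying that the whole ideal of $\lambda_I$ is principal, generated by $\fR^k_{\xi_I}$, rather than merely containing it. If the paper's notion of ``singly constrained'' does not directly give this, I would reduce via \autoref{cor:SinglyConstrainedOne} and \autoref{rmk:ResultantReduction} to the minimal case. In that case the transcendence degree count from the proof of \autoref{cor:Unconstrained} gives $\dim_k \lambda_I = \sum_i (n_i+1) - 1$ exactly, because $\xi_i$ is $k(\lambda_I)$-rational and $\xi_J$ is unconstrained. A prime of height one in a UFD is principal, which settles the minimal case.
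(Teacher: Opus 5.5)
Your proposal is correct and is essentially the paper's proof, which is the same chain of equivalences $\fR^k_{\xi_I}(\mu_I) = 0 \iff \mu_I$ specialises $\lambda_I$ $\iff$ some $(\mu_I,\zeta_I)$ specialises $(\lambda_I,\xi_I)$ $\iff$ some specialisation $\zeta_I$ of $\xi_I$ has $\mu_I \in V^*(\zeta_I)$; your Steps 1--3 spell out each link. The ``main obstacle'' in Step 1 follows directly from the definition and needs no reduction to the minimal case: any element of the prime ideal of $\lambda_I$ over $k$ has an irreducible factor in that ideal, that factor must be associate to $\fR^k_{\xi_I}$, and so the ideal is $(\fR^k_{\xi_I})$.
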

\begin{proof}
  Let $\lambda_I \in V^*(\xi_I)$ be generic over $k(\xi_I)$.
  Then $\fR^k_{\xi_I}(\mu_I) = 0$ if and only if $\mu_I$ specialises $\lambda_I$ over $k$, if and only if there exists $\zeta_I$ such that $(\mu_I,\zeta_I)$ specialises $(\lambda_I,\xi_I)$ over $k$, if and only if there exists $\zeta_I$ specialising $\xi_I$ over $k$, such that $\mu_I \in V^*(\zeta_I)$.
\end{proof}

\begin{prp}
  \label{prp:ResultantMultiplicative}
  Let $\xi = [x],\upsilon = [y] \in \bP(K)$, and $\eta = \xi \otimes \upsilon = [x \otimes y]$.
  Let also $\zeta_I \in \bP(K)^I$ be unconstrained.
  Then the family $\eta,\zeta_I$ is (singly) constrained if and only if both $\xi,\zeta_I$ and $\upsilon,\zeta_I$ are.

  When this is the case, let $\fR^k_{\eta,\zeta_I}(W^*,Z^*_I)$, $\fR^k_{\xi,\zeta_I}(X^*,Z^*_I)$ and $\fR^k_{\upsilon,\zeta_I}(Y^*,Z^*_I)$ be the respective resultants.
  Then there exist multiplicities $m_\xi,m_\upsilon \geq 1$ such that (following \autoref{conv:ResultantEquivalence})
  \begin{gather*}
    \fR^k_{\eta,\zeta_I}(X^* \otimes Y^*,Z^*_I) \equiv \Bigl[ \fR^k_{\xi,\zeta_I}(X^*,Z^*_I) \Bigr]^{m_\xi} \Bigl[ \fR^k_{\upsilon,\zeta_I}(Y^*,Z^*_I) \Bigr]^{m_\upsilon}.
  \end{gather*}
\end{prp}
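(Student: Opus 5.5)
We reduce everything to \autoref{thm:ResultantSplitting}, with $\zeta_I$ in the role of the unconstrained sub-family. Let $\lambda_I \in V^*(\zeta_I)$ be generic over $k(\xi,\upsilon,\zeta_I)$ and set $k_1 = k(\lambda_I)$. By \autoref{rmk:ResultantReduction}, each of the three families ($\eta,\zeta_I$, $\xi,\zeta_I$ and $\upsilon,\zeta_I$) is (singly) constrained over $k$ if and only if the single point $\eta$, $\xi$ or $\upsilon$ is constrained over $k_1$. The corresponding resultants are equivalent to $\fR^{k_1}_\eta$, $\fR^{k_1}_\xi$ and $\fR^{k_1}_\upsilon$. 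By \autoref{cor:SinglyConstrained}, a single point is constrained over $k_1$ if and only if it is algebraic over $k_1$. So it suffices to treat $I = \emptyset$ over the field $k_1$, which we rename $k$.

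For the equivalence, we use that the Segre map is injective on projective points and that $\xi$ and $\upsilon$ can be recovered rationally from $\eta$. Any nonzero coordinate $x_a y_b$ of $x \otimes y$ gives $x \sim (x_c y_b)_c$ and $y \sim (x_a y_c)_c$. Hence $k(\eta) = k(\xi,\upsilon)$, and $\eta$ is algebraic over $k$ exactly when both $\xi$ and $\upsilon$ are. Also, the conjugates of $\eta$ over $k$ are exactly the points $\sigma(\xi)\otimes\sigma(\upsilon)$ for $\sigma \in \operatorname{Aut}(k^a/k)$.

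For the formula, we apply \autoref{thm:ResultantSplitting} three times:
\begin{gather*}
  \fR^k_\eta(W^*) \equiv \prod_t (W^*\cdot w_t)^{E_i},
  \qquad
  \fR^k_\xi \equiv \prod_r (X^*\cdot x_r)^{D_i},
  \qquad
  \fR^k_\upsilon \equiv \prod_s (Y^*\cdot y_s)^{D'_i}.
\end{gather*}
Substituting $W^* = X^*\otimes Y^*$ gives $(X^*\otimes Y^*)\cdot(x\otimes y) = (X^*\cdot x)(Y^*\cdot y)$. Therefore
\begin{gather*}
  \fR^k_\eta(X^*\otimes Y^*) \equiv \prod_t (X^*\cdot x_{r(t)})^{E_i}(Y^*\cdot y_{s(t)})^{E_i},
\end{gather*}
where $t$ runs over the distinct conjugates $w_t = x_{r(t)}\otimes y_{s(t)}$ of $\eta$. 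The projection $t \mapsto r(t)$ is onto the conjugates of $\xi$. Its fibres all have the same size $N_\xi = [k(\xi,\upsilon):k(\xi)]_s$, by transitivity of the Galois action together with the fact that stabilisers are conjugate. So the exponent of each $X^*\cdot x_r$ is $E_i N_\xi$, and similarly each $Y^*\cdot y_s$ appears with exponent $E_i N_\upsilon$.

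The main point to check is that $E_i N_\xi$ is a positive multiple of $D_i$, and likewise for $\upsilon$. This follows from multiplicativity of separable and inseparable degrees in the tower $k \subseteq k(\xi) \subseteq k(\xi,\upsilon)$:
\begin{gather*}
  E_i = [k(\xi,\upsilon):k]_i = D_i \,[k(\xi,\upsilon):k(\xi)]_i.
\end{gather*}
Hence we may take
\begin{gather*}
  m_\xi = [k(\xi,\upsilon):k(\xi)]_i\, N_\xi = [k(\xi,\upsilon):k(\xi)] \geq 1,
\end{gather*}
and symmetrically $m_\upsilon = [k(\xi,\upsilon):k(\upsilon)] \geq 1$. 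As a consistency check, the degrees in $X^*$ then agree: $m_\xi \deg \fR_\xi = [k(\eta):k] = \deg \fR_\eta$.

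Finally, the identities hold up to scalars over $k^a$. Both sides lie in $k[X^*,Y^*]$, so the scalar lies in $k^\times$, which is exactly what $\equiv$ requires. Translating back through \autoref{rmk:ResultantReduction}, the resultants over $k_1$ coincide with those over the original $k$ up to equivalence, which gives the statement.
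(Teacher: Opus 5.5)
Your reduction to a single algebraic point and the Galois-orbit count are correct and genuinely different from the paper's proof. However, the last step, passing from $k_1 = k(\lambda_I)$ back to $k$, is not justified as written.

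\emph{Comparison of routes.} The paper never reduces to $I=\emptyset$. It works over $k$ in all the variables $X^*,Y^*,Z^*_I$ at once. It uses \autoref{lem:ResultantZeros} and the fact that any specialisation of $\xi\otimes\upsilon$ has the form $\xi'\otimes\upsilon'$ to show that $\fR^k_{\eta,\zeta_I}(X^*\otimes Y^*,Z^*_I)$ and $\fR^k_{\xi,\zeta_I}\fR^k_{\upsilon,\zeta_I}$ have the same zeros, then concludes by irreducibility. The multiplicities are only shown to exist, but the scalar lands in $k^\times$ automatically. Your count on the splittings from \autoref{thm:ResultantSplitting} checks out: the fibres have size $[k(\xi,\upsilon):k(\xi)]_s$, and $E_i = D_i\,[k(\xi,\upsilon):k(\xi)]_i$. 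It gives more than the paper: the explicit values $m_\xi = [k_1(\xi,\upsilon):k_1(\xi)]$ and $m_\upsilon = [k_1(\xi,\upsilon):k_1(\upsilon)]$, which the paper only pins down indirectly through degree counts in \autoref{thm:NormalisedResultant}.

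\emph{The gap.} After the reduction, $\equiv$ over $k_1$ allows a factor in $k_1^\times$. Under the identification of \autoref{rmk:ResultantReduction}, that group is $k(Z^*_I)^\times$, not $k^\times$. The remark only says that representatives $R_\eta,R_\xi,R_\upsilon$ over $k$ are also representatives over $k_1$. So what you have actually shown is
\begin{gather*}
  R_\eta(X^*\otimes Y^*,Z^*_I) = c\, R_\xi^{m_\xi} R_\upsilon^{m_\upsilon}, \qquad c \in k(Z^*_I)^\times,
\end{gather*}
whereas the statement needs $c\in k^\times$. This is not a formality, because the substitution $W^*\mapsto X^*\otimes Y^*$ can create content. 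For example, $w_{00}w_{11}-w_{01}w_{10}+t\,w_{00}^2$ is irreducible, yet under $w_{ij}\mapsto a_ib_j$ it becomes $t\,a_0^2b_0^2$.

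One way to repair it:
\begin{itemize}
\item $R_\xi$ and $R_\upsilon$ are irreducible and involve $X^*$, respectively $Y^*$, because $\zeta_I$ is unconstrained. Hence both are primitive over $k[Z^*_I]$.
\item By Gauss's lemma, $c$ then lies in $k[Z^*_I]$ and is, up to $k^\times$, the content of the left-hand side.
\item Suppose an irreducible $p\in k[Z^*_I]$ divides the left-hand side, and let $\mu_I$ be a generic zero of $p$ over $k$.
\item $R_\eta$ is splittable in $W^*$ over $k[Z^*_I]$ (see the start of \autoref{sec:ResultantWedge}). So $R_\eta(W^*,\mu_I)$ is either $0$ or a product of non-zero linear forms, each of which pulls back to a non-zero bilinear form in $X^*,Y^*$.
\item Therefore $R_\eta(X^*\otimes Y^*,\mu_I)=0$ forces $R_\eta(W^*,\mu_I)=0$, that is, $p\mid R_\eta$.
\item This contradicts the irreducibility of $R_\eta$, which involves $W^*$. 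Hence $c\in k^\times$.
\end{itemize}
Alternatively, running the paper's zero-set argument over $k$ gives $c\in k^\times$ directly.
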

\begin{proof}
  Let $\lambda_I \in V^*(\zeta_I)$ be generic.
  By \autoref{cor:SinglyConstrained}, $\eta,\zeta_I$ is singly constrained over $k$ if and only if $\eta$ is algebraic over $k(\lambda_I)$, and similarly for $\xi$ et $\upsilon$.
  The first assertion follows.

  For the second assertion, let $\mu \in L[X]_1$, $\nu \in L[Y]_1$, and $\lambda_I \in \prod_{i \in I} L[Z_i]_1$ be arbitrary.

  Then $\fR^k_{\xi,\zeta_I}(\mu,\lambda_I) = 0$ if and only if there exist $\xi',\zeta'_I$ specialising $\xi,\zeta_I$ over $k$, such that $\mu \in V^*(\xi')$ and $\lambda_I \in V^*(\zeta_I)$.
  This can be extended to a specialisation over $k$ of $\xi,\upsilon,\zeta_I$ to $\xi',\upsilon',\zeta'_I$, i.e., of $\eta,\zeta_I$ to $\eta',\zeta_I$, where $\eta' = \xi' \otimes \upsilon'$.
  Consequently, $\fR^k_{\eta,\zeta_I}(\mu \otimes \nu,\lambda_I) = 0$.
  Similarly, if $\fR^k_{\upsilon,\zeta_I}(\nu,\lambda_I) = 0$.

  Conversely, if $\fR^k_{\eta,\zeta_I}(\mu \otimes \nu,\lambda_I) = 0$, then there exist $\eta',\zeta'_I$ specialising $\eta,\zeta_I$ over $k$, such that $\mu \otimes \nu \in V^*(\eta')$ and $\lambda_I \in V^*(\zeta_I)$.
  Such $\eta'$ is necessarily of the form $\xi' \otimes \upsilon'$, and $\xi',\upsilon',\zeta'_I$ specialises $\xi,\upsilon,\zeta_I$ over $k$.
  Now, either $\mu \in V^*(\xi')$ or $\nu \in V^*(\upsilon')$, so either $\fR^k_{\xi,\zeta_I}(\mu,\lambda_I) = 0$ or $\fR^k_{\upsilon,\zeta_I}(\nu,\lambda_I) = 0$.

  We have shown that $\fR^k_{\eta,\zeta_I}(X^* \otimes Y^*,Z^*_I)$ vanishes if and only if either $\fR^k_{\xi,\zeta_I}(X^*,Z^*_I)$ or $\fR^k_{\upsilon,\zeta_I}(Y^*,Z^*_I)$ does.
  Since the latter are irreducible, our assertion follows.
\end{proof}

\section{Normalised resultants}

\begin{conv}
  \label{conv:NormalisedResultantSetup}
  In this section we consider an intermediary field $k \subseteq K \subseteq L$, finitely generated over $k$.
  We say that $\xi \in \bP(K)$ is \emph{ample} (for the extension $K/k$) if $K = k(\xi)$.
  A family $\xi_I \in \bP(K)^I$ is ample if each $\xi_i$ is.
\end{conv}

\begin{lem}
  \label{lem:NormalisedResultantSetup}
  Let $K/k$ be a finitely generated extension.
  Let $\xi_I \in \bP(K)^I$, and let $\lambda_I \in V^*(\xi_I)$ be generic.
  \begin{enumerate}
  \item
    \label{item:NormalisedResultantSetupUnconstrained}
    If $\xi_I$ is unconstrained, then $\dim_{k(\lambda_I)} K(\lambda_I) + |I| = \dim_k K$.
    In particular, $|I| \leq \dim_k K$, and if $|I| = \dim_k K$, then $K(\lambda_I) / k(\lambda_I)$ is a finite algebraic extension.
  \item
    \label{item:NormalisedResultantSetupConstrained}
    If $|I| = \dim_k K + 1$, then $\xi_I$ is necessarily constrained, and if $|I| > \dim_k K + 1$, then $\xi_I$ is necessarily multiply constrained.
  \item
    \label{item:NormalisedResultantSetupMultiplicity}
    If $|I| = \dim_k K + 1$ and $\xi_I$ is singly constrained, then $K(\lambda_I) / k(\lambda_I)$ is an algebraic extension.
  \item
    \label{item:NormalisedResultantSetupMultiplicityOne}
    If $|I| = \dim_k K + 1$, and $\xi_I$ is ample, then $K(\lambda_I) = k(\lambda_I)$.
    In fact, it is enough to assume that there exists $i \in I$ such that $\xi_i$ is ample, and $\xi_{I \setminus\{i\}}$ is unconstrained.
  \end{enumerate}
\end{lem}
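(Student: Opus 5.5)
The whole lemma comes down to transcendence-degree bookkeeping. The basic identity is the one used in the proof of \autoref{cor:Unconstrained}. Since $\xi_I$ is $K$-rational and $\lambda_I$ is generic in $V^*(\xi_I)$ over $K$ (which contains $k(\xi_I)$; we may take $\lambda_I$ generic over $K$), we have
\begin{gather*}
  \dim_k K(\lambda_I) = \dim_k K + \sum_{i\in I} n_i .
\end{gather*}
On the other hand,
\begin{gather*}
  \dim_k K(\lambda_I) = \dim_k \lambda_I + \dim_{k(\lambda_I)} K(\lambda_I),
\end{gather*}
and $\dim_k \lambda_I = \sum_i (n_i+1) - c$. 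Here $c$ is the "codimension" of the locus of $\lambda_I$: $c=0$ means unconstrained, $c=1$ means singly constrained (a single irreducible relation defines a hypersurface), and $c\ge 2$ means multiply constrained. Combining the two,
\begin{gather*}
  \dim_{k(\lambda_I)} K(\lambda_I) = \dim_k K - |I| + c .
\end{gather*}
I would state and prove this identity first; every item then follows from it.

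For \autoref{item:NormalisedResultantSetupUnconstrained}, set $c=0$. The identity gives the equality directly, and $|I|\le\dim_k K$ follows since the left side is $\ge 0$. If $|I| = \dim_k K$, the extension $K(\lambda_I)/k(\lambda_I)$ is algebraic and finitely generated (because $K/k$ is), hence finite.

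For \autoref{item:NormalisedResultantSetupConstrained}, nonnegativity of the left side forces $c \ge |I| - \dim_k K$, so $c\ge1$ when $|I|=\dim_k K+1$ and $c\ge2$ when $|I|>\dim_k K+1$. One point needs care: "multiply constrained" should mean the locus has codimension $\ge 2$, i.e.\ the ideal of relations is not principal. The link I will use is that an irreducible subvariety of affine space satisfying a single irreducible relation $R$, in the sense that its prime ideal is $(R)$, is a hypersurface of codimension exactly $1$. So codimension $\ge2$ is equivalent to being neither unconstrained nor singly constrained.

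For \autoref{item:NormalisedResultantSetupMultiplicity}, set $c=1$ and $|I| = \dim_k K + 1$; the identity gives $\dim_{k(\lambda_I)} K(\lambda_I) = 0$.

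For \autoref{item:NormalisedResultantSetupMultiplicityOne}, first reduce the hypothesis: if $\xi_I$ is ample, pick any $i$. I then need $\xi_J$ (with $J=I\setminus\{i\}$) unconstrained, and I expect this step to be the main obstacle. My approach is to use \autoref{cor:Unconstrained}: for $J'\subseteq J$ nonempty, ampleness gives $\dim_k \xi_{J'} = \dim_k K \ge |J| \ge |J'|$. This argument works when $\dim_k K \ge 1$. If $\dim_k K = 0$, then $J=\emptyset$, which is trivially unconstrained. Now assume $\xi_i$ is ample and $\xi_J$ is unconstrained. By \autoref{thm:ResultantSplitting}, $\xi_i$ is rational over $k(\lambda_I)$. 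Ampleness then gives $K = k(\xi_i)\subseteq k(\lambda_I)$, so $K(\lambda_I)=k(\lambda_I)$.
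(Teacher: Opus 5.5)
Your proof is correct. For items (1) and (4) it is the paper's own argument: the same count $\dim_k K(\lambda_I) = \dim_k K + \sum_i n_i$ against $\dim_k \lambda_I = \sum_i (n_i+1)$, and the same use of \autoref{thm:ResultantSplitting} to get $K = k(\xi_i) \subseteq k(\lambda_I)$.

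You differ from the paper in items (2) and (3). The paper reduces both to (1) through \autoref{cor:SinglyConstrainedOne}: a singly constrained family has some $i$ with $\xi_{I \setminus \{i\}}$ unconstrained. Then (1) gives $|I| - 1 \leq \dim_k K$, which yields (2), and applying (1) to $\xi_{I \setminus \{i\}}$ yields (3). You instead prove one identity, $\dim_{k(\lambda_I)} K(\lambda_I) = \dim_k K - |I| + c$, where $c$ is the codimension of the locus of $\lambda_I$. This gives (1)--(3) at once, and it also computes the transcendence degree of $K(\lambda_I)/k(\lambda_I)$ in every case, which the paper does not. The price is the extra fact that a singly constrained family has $c = 1$. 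Only this direction is used, and it is elementary: if a variable occurs in the irreducible relation $R$, the remaining variables stay algebraically independent modulo $R$. The paper's route avoids codimension altogether by staying inside its own corollaries.

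In (4), your check via \autoref{cor:Unconstrained} that an ample family of size $\dim_k K$ is unconstrained fills a step the paper leaves implicit. When you invoke \autoref{thm:ResultantSplitting} there, also say that $\xi_I$ is constrained; this is item (2).
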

\begin{proof}
  For \autoref{item:NormalisedResultantSetupUnconstrained}, recall that the family $\lambda_I$ is generic over $k$, and each $\lambda_i$ satisfies an algebraic (linear, even) relation with $K$.
  Item \autoref{item:NormalisedResultantSetupConstrained} follows.

  For \autoref{item:NormalisedResultantSetupMultiplicity}, there exists $i \in I$ such that $\xi_{I \setminus \{i\}}$ is unconstrained, and we may apply \autoref{item:NormalisedResultantSetupUnconstrained}.
  Finally, let us assume that the hypotheses of \autoref{item:NormalisedResultantSetupMultiplicityOne} hold.
  Then, $K = k(\xi_i)$ by hypothesis, and by \autoref{thm:ResultantSplitting}, $\xi_i$ is $k(\lambda_I)$-rational.
\end{proof}

\begin{dfn}
  \label{dfn:NormalisedDegree}
  Let $|I| = \dim_k K$ and let $\xi_I \in \bP(K)^I$.
  We define $\deg^{K/k} \xi_I \in \bN$, the \emph{degree} of $\xi_I$ relative to the extension $K/k$, as follows:
  \begin{itemize}
  \item If $\xi_I$ is unconstrained, then we let $\lambda_I \in V^*(\xi_I)$ be generic, and define
    \begin{gather*}
      \deg^{K/k} \xi_I = \bigl[ K(\lambda_I) : k(\lambda_I) \bigr].
    \end{gather*}
  \item If $\xi_I$ is constrained, then $\deg^{K/k} \xi_I = 0$.
  \end{itemize}
\end{dfn}

\begin{dfn}
  \label{dfn:NormalisedResultant}
  Let $|I| = \dim_k K + 1$ and let $\xi_I \in \bP^{n_I}(K)$.
  We define $\fR^{K/k}_{\xi_I} \in k[X^*_I]$, the \emph{normalised resultant} associated to $\xi_I$ relative to the extension $K/k$, as follows:
  \begin{itemize}
  \item If $\xi_I$ is singly constrained, then we let $\lambda_I \in V^*(\xi_I)$ be generic, and define
    \begin{gather*}
      \fR^{K/k}_{\xi_I} = \bigl( \fR^k_{\xi_I} \bigr)^{\bigl[ K(\lambda_I) : k(\lambda_I) \bigr]}.
    \end{gather*}
  \item If $\xi_I$ is multiply constrained, then $\fR^{K/k}_{\xi_I} = 1$ (or any other non-zero constant).
  \end{itemize}
\end{dfn}

\begin{rmk}
  \label{rmk:NormalisedResultantReduction}
  As in \autoref{rmk:ResultantReduction}, let $J \subseteq I$ and $I_0 = I \setminus J$.
  Assume that $\xi_I \in \bP^{n_I}(K)$, and that $\xi_J$ is unconstrained.
  Let $\lambda_I \in V^*(\xi_I)$ be generic, and let $k_1 = k(\lambda_J)$.
  Let also $K_1 = K(\lambda_J)$.

  Then $\dim_{k_1} K_1 + |J| = \dim_k K$, by \autoref{lem:NormalisedResultantSetup}\autoref{item:NormalisedResultantSetupUnconstrained}, and
  \begin{gather}
    \label{eq:NormalisedResultantReduction}
    \bigl[ K(\lambda_I) : k(\lambda_I) \bigr] = \bigl[ K_1(\lambda_{I_0}) : k_1(\lambda_{I_0}) \bigr].
  \end{gather}

  \begin{enumerate}
  \item If $|I| = \dim_k K$ and $\xi_I$ is unconstrained over $k$, then $|I_0| = \dim_{k_1} K_1$, $\xi_{I_0}$ is unconstrained over $k_1$, and by \autoref{eq:NormalisedResultantReduction},
    \begin{gather*}
      \deg^{K/k} \xi_I = \deg^{K_1/k_1} \xi_{I_0}.
    \end{gather*}
  \item Assume now that $|I| = \dim_k K + 1$ and $\xi_I$ is singly constrained over $k$.
    Then $|I_0| = \dim_{k_1} K_1 + 1$, $\xi_{I_0}$ is singly constrained over $k_1$, and $\fR^k_{\xi_I} \equiv \fR^{k_1}_{\xi_{I_0}}$, under the identification of \autoref{rmk:ResultantReduction}\autoref{item:ResultantReductionIsomorphism}.
    Again, using \autoref{eq:NormalisedResultantReduction},
    \begin{gather*}
      \fR^{K/k}_{\xi_I} \equiv \fR^{K_1/k_1}_{\xi_{I_0}}.
    \end{gather*}
  \end{enumerate}
\end{rmk}

\begin{lem}
  \label{lem:NormalisedDegree}
  Let $|J| = \dim_k K$.
  Then for every $\xi \in \bP(K)$ and $\zeta_J \in \bP(K)^J$:
  \begin{gather*}
    \deg^{K/k} \zeta_J = \deg_{X^*} \fR^{K/k}_{\xi,\zeta_J}.
  \end{gather*}
\end{lem}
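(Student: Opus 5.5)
Two cases, according to whether $\zeta_J$ is constrained; $I = \{\ast\} \cup J$ with $\xi$ in position $\ast$, so $|I| = \dim_k K + 1$ and $\fR^{K/k}_{\xi,\zeta_J}$ is defined.

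\emph{Constrained case.} Here $\deg^{K/k}\zeta_J = 0$. If $\xi,\zeta_J$ is multiply constrained, then $\fR^{K/k}_{\xi,\zeta_J}$ is a non-zero constant and has degree $0$ in $X^*$. If it is singly constrained, then $\fR^k_{\xi,\zeta_J}$ is the unique irreducible relation of $(\lambda,\lambda_J)$, where $\lambda \in V^*(\xi)$, $\lambda_J \in V^*(\zeta_J)$ are generic. A constrained subfamily $\zeta_{J'} \subseteq \zeta_J$ gives a non-trivial relation in $X^*_{J'}$ alone. By uniqueness that relation is $\fR^k_{\xi,\zeta_J}$ up to a factor, so $X^*$ does not occur and the degree is again $0$. (This is the "least constrained subfamily" argument of \autoref{cor:SinglyConstrainedOne}.)

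\emph{Unconstrained case.} Let $\lambda_J$ be generic, $k_1 = k(\lambda_J)$, $K_1 = K(\lambda_J)$. By \autoref{lem:NormalisedResultantSetup}\autoref{item:NormalisedResultantSetupUnconstrained}, $\dim_{k_1} K_1 = 0$, so $K_1/k_1$ is finite of degree $\deg^{K/k}\zeta_J$. By \autoref{rmk:NormalisedResultantReduction}, $\fR^{K/k}_{\xi,\zeta_J} \equiv \fR^{K_1/k_1}_{\xi}$, now over $k_1$, with $|I_0| = 1$ and $\dim_{k_1}K_1 = 0$. This identification preserves degree in $X^*$, since the coefficients in $X^*_J$ become scalars of $k_1$. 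By \autoref{lem:NormalisedResultantSetup}\autoref{item:NormalisedResultantSetupConstrained} the single point $\xi$ is constrained over $k_1$, hence singly constrained by \autoref{thm:ResultantSplitting} with empty $J$. That theorem gives
\[
\deg_{X^*}\fR^{k_1}_\xi = [k_1(\xi):k_1].
\]
Let $\lambda \in V^*(\xi)$ be generic over $K_1$. Then
\[
\deg_{X^*}\fR^{K_1/k_1}_\xi = [k_1(\xi):k_1]\,[K_1(\lambda):k_1(\lambda)],
\]
and it remains to show this equals $[K_1:k_1]$.

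\emph{Main step: the degree identity.} Since $\lambda$ is generic over $K_1$ in $V^*(\xi)$ and $\xi$ is $k_1(\lambda)$-rational (\autoref{thm:ResultantSplitting}), we have $k_1(\xi) \subseteq k_1(\lambda)$, and I would use the tower
\[
[K_1(\lambda):k_1] = [K_1(\lambda):k_1(\lambda)]\,[k_1(\lambda):k_1(\xi)]\,[k_1(\xi):k_1].
\]
Here $K_1(\lambda)$ and $k_1(\lambda)$ are not finite over $k_1$, so I would compare transcendence-degree-$n$ extensions via linear disjointness instead. Over $k_1(\xi)$, $\lambda$ is a generic point of the $k_1(\xi)$-rational linear space $V^*(\xi)$. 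Its function field is purely transcendental, hence regular, so $k_1(\xi)(\lambda)$ is linearly disjoint from $K_1$ over $k_1(\xi)$. Therefore
\[
[K_1(\lambda):k_1(\xi,\lambda)] = [K_1:k_1(\xi)],
\]
and $k_1(\xi,\lambda) = k_1(\lambda)$. Multiplying by $[k_1(\xi):k_1]$ gives
\[
[K_1(\lambda):k_1(\lambda)]\,[k_1(\xi):k_1] = [K_1:k_1] = \deg^{K/k}\zeta_J,
\]
as required. I expect the justification of this linear disjointness (regularity of the generic point of a rational linear variety) to be the step needing the most care.
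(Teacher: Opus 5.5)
Your proposal is correct and follows the paper's proof. The constrained case is handled identically. In the unconstrained case you likewise reduce via \autoref{rmk:NormalisedResultantReduction} to a finite extension and a single point, use \autoref{thm:ResultantSplitting} for $\deg_{X^*}\fR^{k_1}_\xi = [k_1(\xi):k_1]$ and $k_1(\xi) \subseteq k_1(\lambda)$, and conclude from $[K_1(\lambda):k_1(\lambda)] = [K_1:k_1(\xi)]$. The step you flagged as delicate is elementary: the paper takes $\lambda'$ to be all coefficients of $\lambda$ except the one at a non-zero coordinate of $\xi$. These are free over $K$ and satisfy $k(\lambda) = k(\lambda',\xi)$, which is exactly your linear-disjointness argument made explicit.
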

\begin{proof}
  Assume first that $\zeta_J$ is unconstrained.
  By \autoref{rmk:NormalisedResultantReduction} we may assume that $J = \emptyset$.
  Then $K/k$ is a finite algebraic extension, and $\deg^{K/k} \emptyset = [K:k]$.
  By \autoref{thm:ResultantSplitting}, $\bigl[ k(\xi) : k \bigr] = \deg_{X^*} \fR^k_\xi$.
  Let $\lambda \in V^*(\xi)$ be generic, and let $\lambda'$ consist of all the coefficients of $\lambda$ bar one, corresponding to a non-zero homogeneous coordinate of $\xi$.
  Then $\lambda'$ is generic over $K$, and $\xi$ is $k(\lambda)$-rational by \autoref{thm:ResultantSplitting}, so $k(\lambda) = k(\lambda,\xi) = k(\lambda',\xi)$.
  Therefore
  \begin{gather*}
    \bigl[ K : k(\xi) \bigr]
    = \bigl[ K(\lambda') : k(\lambda',\xi) \bigr]
    = \bigl[ K(\lambda) : k(\lambda) \bigr],
  \end{gather*}
  and
  \begin{gather*}
    [K : k]
    = \bigl[ K : k(\xi) \bigr]  \bigl[ k(\xi) : k \bigr]
    = \bigl[ K(\lambda) : k(\lambda) \bigr] \deg_{X^*} \fR^k_\xi = \deg_{X^*} \fR^{K/k}_\xi.
  \end{gather*}

  In the opposite case, $\zeta_J$ is constrained.
  If $\xi,\zeta_J$ is multiply constrained, then $\fR^{K/k}_{\xi,\zeta_J}$ is constant, and if $\xi,\zeta_J$ is singly constrained, then so must be $\zeta_J$, and $\fR^k_{\xi,\zeta_J} \equiv \fR^k_{\zeta_J}$.
  Either way, $\deg_{X^*} \fR^{K/k}_{\xi,\zeta_J} = 0$.
\end{proof}

\begin{thm}
  \label{thm:NormalisedResultant}
  Assume that $K/k$ is a finitely generated extension, and let $|I| = \dim_k K + 1$.
  \begin{enumerate}
  \item
    \label{item:NormalisedResultantAmple}
    If $\xi_I \in \bP(K)^I$ is ample, then $\fR^{K/k}_{\xi_I} \equiv \fR^k_{\xi_I}$.
  \item
    \label{item:NormalisedResultantMultiplicative}
    Let $i \in I$ and $J = I \setminus \{i\}$.
    Then for every $\xi,\upsilon \in \bP(K)$, $\eta = \xi \otimes \upsilon$ and $\zeta_J \in \bP(K)^J$:
    \begin{gather}
      \label{eq:NormalisedResultantMultiplicative}
      \fR^{K/k}_{\eta,\zeta_J}(X^* \otimes Y^*,Z^*_J)
      \equiv
      \fR^{K/k}_{\xi,\zeta_J}(X^*,Z^*_J)
      \cdot \fR^{K/k}_{\upsilon,\zeta_J}(Y^*,Z^*_J).
    \end{gather}
  \end{enumerate}
  Moreover, these properties determine the map $\xi_I \mapsto \fR^{K/k}_{\xi_I}$ (up to equivalence).
\end{thm}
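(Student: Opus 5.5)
Item \autoref{item:NormalisedResultantAmple} comes first. If $\xi_I$ is ample, then $\dim_k K \geq 0$ forces $I \neq \emptyset$, so $\xi_I$ is constrained by \autoref{lem:NormalisedResultantSetup}\autoref{item:NormalisedResultantSetupConstrained}. The main question is whether it is singly constrained. I would pick a minimal constrained sub-family $\xi_{J'}$ and an index $i \in J'$, and look at $J = I \setminus \{i\}$.

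If $\xi_J$ is unconstrained, then $\xi_I$ is singly constrained by \autoref{cor:SinglyConstrainedOne}. Then \autoref{lem:NormalisedResultantSetup}\autoref{item:NormalisedResultantSetupMultiplicityOne}, using that $\xi_i$ is ample, gives $K(\lambda_I) = k(\lambda_I)$. The exponent in \autoref{dfn:NormalisedResultant} is therefore $1$, and $\fR^{K/k}_{\xi_I} \equiv \fR^k_{\xi_I}$.

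If instead $\xi_I$ is multiply constrained, then $\fR^{K/k}_{\xi_I}$ is a constant. I would then need to read the statement with the convention that $\fR^k_{\xi_I}$ is also taken to be a constant in the multiply constrained case. This is the delicate point. Alternatively I would try to show that ample families of size $\dim_k K + 1$ cannot be multiply constrained, though I doubt this holds; for instance, repeating the same point twice should give a multiply constrained family. So I expect item \autoref{item:NormalisedResultantAmple} to be intended for the singly constrained case, with the constant convention covering the rest.

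For item \autoref{item:NormalisedResultantMultiplicative}, first suppose $\zeta_J$ is constrained. If $\eta,\zeta_J$ is singly constrained, then so is $\zeta_J$, and $\fR^k_{\eta,\zeta_J} \equiv \fR^k_{\zeta_J}$, as in the proof of \autoref{lem:NormalisedDegree}; the same holds for $\xi,\zeta_J$ and $\upsilon,\zeta_J$. The exponents $[K(\lambda) : k(\lambda)]$ depend only on $\lambda_J$ once the relation does not involve the first variable. So the left side of \autoref{eq:NormalisedResultantMultiplicative} is a power $e$ of $\fR^k_{\zeta_J}$, and the right side is the power $2e$. This does not match, so I need to recheck the case analysis. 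Most likely, whenever $\zeta_J$ is constrained, all three families are multiply constrained or all reduce to constants. Indeed, a constrained $\zeta_J$ with $|J| = \dim_k K$ is $\dim_{k(\lambda)}$-deficient. Adding any $\xi$ then typically gives a further relation, because $\xi$ is algebraic over $k(\lambda_J)$ after reduction. I would verify this with \autoref{rmk:NormalisedResultantReduction}.

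The main case is $\zeta_J$ unconstrained. By \autoref{rmk:NormalisedResultantReduction} I would reduce to $J = \emptyset$ with $K/k$ finite, replacing $k$ by $k_1 = k(\lambda_J)$ and $K$ by $K_1$; the reduction preserves equivalence, and the exponents agree by \autoref{eq:NormalisedResultantReduction}. Now \autoref{thm:ResultantSplitting} gives $\fR^k_\xi \equiv \prod_t (X^* \cdot x_t)^{D_i}$ over the conjugates of $\xi$. The multiplier $[K(\lambda) : k(\lambda)] = [K : k(\xi)]$ then turns $\fR^{K/k}_\xi$ into the norm form $\prod_{\sigma} \sigma(X^* \cdot x)$, with the product taken over all $[K:k]$ embeddings $\sigma$ of $K$ counted with inseparable multiplicity. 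This norm form is visibly multiplicative under $x \mapsto x \otimes y$, since $(X^* \otimes Y^*) \cdot (x \otimes y) = (X^* \cdot x)(Y^* \cdot y)$. That proves \autoref{eq:NormalisedResultantMultiplicative}. The key step is identifying $\fR^{K/k}_\xi$ with this norm, and I expect the bookkeeping of inseparable degrees there to be the main technical obstacle.

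For uniqueness, I would take a second assignment $\fR'$ satisfying both properties. Given $\xi_I$, I would choose an ample point $\omega \in \bP(K)$; one exists since $K/k$ is finitely generated, by taking coordinates $1$ together with generators. Next I would apply multiplicativity coordinatewise, one index at a time, to $\xi_i \otimes \omega$, each of which is ample. This gives
\[
\fR(\xi \otimes \omega, \ldots) \equiv \fR(\xi, \ldots)\, \fR(\omega, \ldots),
\]
so $\fR(\xi_I)$ is determined as a quotient of values on families with more ample entries. Iterating until the family is fully ample, where item \autoref{item:NormalisedResultantAmple} pins the value to $\fR^k$, shows that $\fR'$ and $\fR^{K/k}$ agree up to equivalence.
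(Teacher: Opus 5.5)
Your proposal has a genuine gap in item \autoref{item:NormalisedResultantMultiplicative} when $\zeta_J$ is singly constrained. You claim that the exponent $[K(\lambda_I):k(\lambda_I)]$ depends only on $\lambda_J$, and this is false. So is your conjecture that all three families then collapse to constants; that is true only when $\zeta_J$ is multiply constrained. Here is a counterexample. Take $K = k(t)$, $\zeta = [1:0]$ (so $\fR^k_\zeta \equiv Z^*\cdot z$ with $z = (1,0)$) and $\xi = \upsilon = [1:t]$. Then $t$ is rational over $k(\lambda_\xi)$, but it has degree $2$ over $k(\lambda_\eta,\lambda_\zeta)$ for $\lambda_\eta$ generic in $V^*(\xi\otimes\xi)$. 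Hence $\fR^{K/k}_{\xi,\zeta} \equiv Z^*\cdot z$ while $\fR^{K/k}_{\xi\otimes\xi,\zeta} \equiv (Z^*\cdot z)^2$, and \autoref{eq:NormalisedResultantMultiplicative} holds non-trivially. The missing idea runs as follows. All factors in the identity are powers of the irreducible $\fR^k_{\zeta_J}$, so it suffices to compare degrees in some $Z^*_j$ occurring in it. After reducing to $I = \{i,j\}$ via \autoref{rmk:NormalisedResultantReduction}, \autoref{lem:NormalisedDegree} identifies $\deg_{Z^*_j} \fR^{K/k}_{\eta,\zeta_j}$ with $\deg^{K/k}\eta$. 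What is needed is therefore the additivity $\deg^{K/k}(\xi\otimes\upsilon) = \deg^{K/k}\xi + \deg^{K/k}\upsilon$. This no longer involves $\zeta$. It follows by comparing $T^*$-degrees in the unconstrained case, applied with an auxiliary ample (hence unconstrained) point $\tau$ in place of $\zeta$.

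Item \autoref{item:NormalisedResultantAmple} is also left incomplete, but for a spurious reason. Ample families of size $\dim_k K + 1$ are never multiply constrained, so no convention is needed; none is available anyway, since $\fR^k_{\xi_I}$ is undefined in that case. If $\xi_I$ is ample, then every nonempty $J' \subseteq I$ has $\dim_k \xi_{J'} = \dim_k K$. By \autoref{cor:Unconstrained}, $\xi_{I\setminus\{i\}}$ is then unconstrained for every $i$. So $\xi_I$ is singly constrained by \autoref{cor:SinglyConstrainedOne}, and \autoref{lem:NormalisedResultantSetup}\autoref{item:NormalisedResultantSetupMultiplicityOne} gives exponent one. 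Your repeated-point example is in fact singly constrained: for $\xi_0 = \xi_1 = [1:t]$ over $k(t)$ the only relation is $\det(\lambda_0,\lambda_1) = 0$.

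The rest is sound. Your uniqueness argument is the paper's. Your norm-form treatment of the unconstrained case is a valid alternative to the paper's route through \autoref{prp:ResultantMultiplicative} plus degree counting. It has the advantage of exhibiting $\fR^{K/k}$ explicitly as a norm, and the inseparable bookkeeping is just $N_{K/k} = N_{k(\xi)/k}\circ N_{K/k(\xi)}$. Note, however, that your reduction to $J = \emptyset$ only yields the identity up to a factor in $k(\lambda_J)^\times$. To get it up to $k^\times$ you need a small extra argument, e.g.\ that neither side has a non-constant factor in $k[Z^*_J]$. The paper's route avoids this by working over $k$ throughout.
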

\begin{proof}
  Let us start with the moreover part.
  If $\xi_I$ is ample, then $\fR^{K/k}_{\xi_I}$ is determined by \autoref{item:NormalisedResultantAmple}.
  If $\xi_I$ is not ample, then there exists $i \in I$ such that $\xi_i$ is not ample.
  Let $J = I \setminus \{i\}$ and let $\tau$ be ample.
  Then
  \begin{gather*}
    \fR^{K/k}_{\xi_i \otimes \tau,\xi_J}(X^*_i \otimes T^*,X^*_J)
    \equiv
    \fR^{K/k}_{\xi_I}(X^*_I)
    \cdot \fR^{K/k}_{\tau,\xi_J}(T^*,X^*_J).
  \end{gather*}
  Both $\tau$ and $\xi_i \otimes \tau$ are ample, so by induction on the number of non-ample members, $\fR^{K/k}_{\xi_I}$ is determined for every $\xi_I$.

  Let us now prove the main assertion.
  For \autoref{item:NormalisedResultantAmple}, just apply \autoref{lem:NormalisedResultantSetup}\autoref{item:NormalisedResultantSetupMultiplicityOne}.
  For \autoref{item:NormalisedResultantMultiplicative}, we consider several cases.

  Assume first that $\zeta_J$ is unconstrained.
  The resultant $\fR^{K/k}_{\eta,\zeta_J}(W^*,Z^*_J)$ is homogeneous in $W^*$, of degree $\deg^{K/k} \zeta_J$ by \autoref{lem:NormalisedDegree}.
  By \autoref{prp:ResultantMultiplicative}, there exist multiplicities $m,m'$ such that
  \begin{gather*}
    \fR^{K/k}_{\eta,\zeta_J}(X^* \otimes Y^*,Z^*_J)
    \equiv
    \Bigl[ \fR^k_{\xi,\zeta_J}(X^*,Z^*_J) \Bigr]^m
    \Bigl[ \fR^k_{\upsilon,\zeta_J}(Y^*,Z^*_J) \Bigr]^{m'}.
  \end{gather*}
  The left hand side is homogeneous in $X^*$ and in $Y^*$, of degree $\deg^{K/k} \zeta_J$.
  Using \autoref{lem:NormalisedDegree} again on the right hand side and comparing degrees, we obtain \autoref{eq:NormalisedResultantMultiplicative}.

  Assume next that $\zeta_J$ is singly constrained.
  In this case, $\fR^k_{\zeta_J}(Z^*_J)$ is an irreducible polynomial, and all the factors occurring in \autoref{eq:NormalisedResultantMultiplicative} are its powers (possibly constants).
  Choose $j \in J$ such that $Z^*_j$ occurs in $\fR^k_{\zeta_J}(Z^*_J)$, i.e., such that $\zeta_{J \setminus \{j\}}$ is unconstrained.
  Then it will suffice to show that
  \begin{gather*}
    \deg_{Z^*_j} \fR^{K/k}_{\eta,\zeta_J}
    =
    \deg_{Z^*_j} \fR^{K/k}_{\xi,\zeta_J}
    +
    \deg_{Z^*_j} \fR^{K/k}_{\upsilon,\zeta_J}.
  \end{gather*}
  By \autoref{rmk:NormalisedResultantReduction} we may assume that $I = \{i,j\}$ and $\zeta_J = \zeta$.
  We are therefore left with proving
  \begin{gather}
    \label{eq:NormalisedResultantMultiplicativeSinglyConstrained}
    \deg_{T^*} \fR^{K/k}_{\eta,\tau}
    =
    \deg_{T^*} \fR^{K/k}_{\xi,\tau}
    +
    \deg_{T^*} \fR^{K/k}_{\upsilon,\tau}
  \end{gather}
  with $\tau = \zeta$.
  By \autoref{lem:NormalisedDegree}, \autoref{eq:NormalisedResultantMultiplicativeSinglyConstrained} is equivalent to
  \begin{gather*}
    \deg^{K/k} \eta
    =
    \deg^{K/k} \xi
    +
    \deg^{K/k} \upsilon.
  \end{gather*}
  Therefore, it does not depend on $\tau$.
  When $\tau$ is ample, \autoref{eq:NormalisedResultantMultiplicativeSinglyConstrained} holds by the case already proved.

  The last case to consider is when $\zeta_J$ is multiply constrained.
  Then every family extending it is multiply constrained, and \autoref{eq:NormalisedResultantMultiplicative} holds as $1 \equiv 1 \cdot 1$.
\end{proof}

\begin{cor}
  \label{cor:NormalisedDegree}
  Assume that $K/k$ is a finitely generated extension, and let $|I| = \dim_k K$.
  \begin{enumerate}
  \item
    \label{item:NormalisedDegreeAmple}
    If $\xi_I \in \bP(K)^I$ is ample and $\upsilon \in \bP(K)$ arbitrary, then
    \begin{gather*}
      \deg^{K/k} \xi_I = \deg_{Y^*} \fR^k_{\upsilon,\xi_I}(Y^*,X^*_I).
    \end{gather*}
  \item
    \label{item:NormalisedDegreeAdditive}
    If $J = I \setminus \{i\}$, $\xi,\upsilon \in \bP(K)$, $\zeta_J \in \bP(K)^J$, then
    \begin{gather*}
      \deg^{K/k}(\xi \otimes \upsilon,\zeta_J) = \deg^{K/k}(\xi,\zeta_J) + \deg^{K/k}(\upsilon,\zeta_J).
    \end{gather*}
  \end{enumerate}
  Moreover, $\deg^{K/k}$ is determined by the restriction of \autoref{item:NormalisedDegreeAmple} to the case where $\upsilon$ is also ample together with \autoref{item:NormalisedDegreeAdditive}.
\end{cor}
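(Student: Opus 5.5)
The plan is to reduce everything to \autoref{lem:NormalisedDegree} and \autoref{thm:NormalisedResultant}, using an auxiliary point $\upsilon$ (or $\tau$) to turn degrees into degrees of normalised resultants.

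For \autoref{item:NormalisedDegreeAmple}, apply \autoref{lem:NormalisedDegree} with $\zeta_J = \xi_I$ and the extra point $\upsilon$. This gives $\deg^{K/k} \xi_I = \deg_{Y^*} \fR^{K/k}_{\upsilon,\xi_I}$. It then remains to replace the normalised resultant by $\fR^k_{\upsilon,\xi_I}$ when $\xi_I$ is ample but $\upsilon$ is arbitrary. I would treat two cases.
\begin{itemize}
\item If $\xi_I$ is constrained, both sides vanish. The left side is zero by definition. On the right, the resultant is either constant or equivalent to $\fR^k_{\xi_I}$, which does not involve $Y^*$; this is the argument at the end of the proof of \autoref{lem:NormalisedDegree}.
\item If $\xi_I$ is unconstrained, \autoref{lem:NormalisedResultantSetup}\autoref{item:NormalisedResultantSetupMultiplicityOne} shows that $[K(\lambda) : k(\lambda)] = 1$ for the family $\upsilon,\xi_I$. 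We apply it with some $\xi_i$ ample as the distinguished member and $(\upsilon,\xi_{I\setminus\{i\}})$ as the rest. For this we need $\upsilon,\xi_{I\setminus\{i\}}$ to be unconstrained.
\end{itemize}

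That unconstrainedness requirement is the main obstacle, since it can fail. I would try to avoid it by working directly with the definition: the exponent is $[K(\lambda_\upsilon,\lambda_I):k(\lambda_\upsilon,\lambda_I)]$. Since $\xi_I$ is unconstrained, $\upsilon,\xi_I$ is singly constrained by \autoref{cor:SinglyConstrained}. By \autoref{thm:ResultantSplitting} applied with the omitted index in $I$, every $\xi_i$ with $\xi_{I\setminus\{i\}},\upsilon$ unconstrained is $k(\lambda)$-rational. By \autoref{cor:SinglyConstrainedOne}, at least one such $i$ exists, namely any $i$ for which $X^*_i$ occurs in the resultant. Since every $\xi_i$ is ample, any single such $i$ already gives $K \subseteq k(\lambda)$, so the exponent is $1$. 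If instead no $X^*_i$ occurs, then the constrained subfamily is $\upsilon$ alone. In that case $\fR^k$ is $\fR^k_\upsilon$, and I would handle it using \autoref{rmk:NormalisedResultantReduction} to pass to $J=\emptyset$.

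For \autoref{item:NormalisedDegreeAdditive}, pick an auxiliary point $\tau \in \bP(K)$ and apply \autoref{lem:NormalisedDegree} with $\tau$ as the distinguished point and $(\xi \otimes \upsilon,\zeta_J)$, $(\xi,\zeta_J)$, $(\upsilon,\zeta_J)$ as the $\zeta$-families. Each degree then becomes $\deg_{T^*}$ of a normalised resultant. Apply \autoref{thm:NormalisedResultant}\autoref{item:NormalisedResultantMultiplicative} with index set $J\cup\{i,\tau\}$ and $i$ as the multiplicative slot, and compare $T^*$-degrees on both sides. This gives the sum formula.

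For the moreover part, I would copy the uniqueness argument of \autoref{thm:NormalisedResultant}. Item \autoref{item:NormalisedDegreeAmple} with $\upsilon$ ample fixes $\deg^{K/k}$ on ample families. If some $\xi_i$ is not ample, choose an ample $\tau$. Then \autoref{item:NormalisedDegreeAdditive} gives $\deg(\xi_i\otimes\tau,\xi_J) = \deg(\xi_I)+\deg(\tau,\xi_J)$, and both $\tau$ and $\xi_i\otimes\tau$ are ample, since $k(\xi_i\otimes\tau)\supseteq k(\tau)$ up to the tensor-coordinate ratios. So induction on the number of non-ample members determines $\deg(\xi_I)$.
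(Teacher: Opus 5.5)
\textbf{The gap is the last case of item (1), and it cannot be closed because the identity fails there.} That case is: $\xi_I$ unconstrained and $\upsilon$ algebraic over $k$, so that no $X^*_i$ with $i \in I$ occurs and $\fR^k_{\upsilon,\xi_I} \equiv \fR^k_\upsilon$.

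Your proposed reduction via \autoref{rmk:NormalisedResultantReduction} sets $k_1 = k(\lambda_I)$ and $K_1 = K(\lambda_I)$. It turns the claim into $[K_1 : k_1] = [k_1(\upsilon) : k_1]$, i.e.\ $K \subseteq k(\upsilon,\lambda_I)$, and nothing forces this.

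Already for $I = \emptyset$ it fails. If $[K:k] = 2$ and $\upsilon$ is $k$-rational, then $\deg^{K/k}\emptyset = [K:k] = 2$. But $\deg_{Y^*} \fR^k_\upsilon = [k(\upsilon):k] = 1$ by \autoref{thm:ResultantSplitting}.

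It also fails in positive dimension. Take $K = k(\alpha,t)$ with $[k(\alpha):k] = 2$ and $t$ transcendental, $\xi = [1:t:\alpha]$ (ample and unconstrained), and $\upsilon$ $k$-rational. For $\lambda \in V^*(\xi)$ generic, $k(\lambda)$ is purely transcendental over $k$ and $K(\lambda) = k(\lambda)(\alpha)$, so $\deg^{K/k}\xi = 2$. Meanwhile $\fR^k_{\upsilon,\xi} \equiv \fR^k_\upsilon$ is linear in $Y^*$.

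In general, \autoref{lem:NormalisedDegree} only gives $\deg^{K/k}\xi_I = \bigl[K(\lambda):k(\lambda)\bigr] \deg_{Y^*}\fR^k_{\upsilon,\xi_I}$, with $\lambda$ generic for the whole family $\upsilon,\xi_I$. In these examples that exponent is $2$, not $1$.

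So item (1) needs an extra hypothesis. That is also all the paper's one-line proof (apply \autoref{lem:NormalisedDegree} to \autoref{thm:NormalisedResultant}) actually delivers. Part (1) of \autoref{thm:NormalisedResultant} requires the whole family $\upsilon,\xi_I$ to be ample, i.e.\ $\upsilon$ ample.

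Your remaining cases are correct and prove slightly more than that. Suppose $\xi_I$ is constrained, or some $X^*_i$ with $i \in I$ occurs in $\fR^k_{\upsilon,\xi_I}$ (for instance whenever $\upsilon$ is not algebraic over $k$). Then \autoref{thm:ResultantSplitting} makes the ample $\xi_i$ rational over $k(\lambda)$, and the exponent is $1$. You should therefore state (1) under that hypothesis, or simply for ample $\upsilon$, rather than try to close the last case.

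Your proof of item (2) and of the moreover clause matches the paper's intended argument and is unaffected, since the moreover clause only uses (1) with $\upsilon$ ample. Item (2) goes through an auxiliary $\tau$, then \autoref{lem:NormalisedDegree}, then a comparison of $T^*$-degrees in the multiplicativity of \autoref{thm:NormalisedResultant}. The moreover clause uses induction on the number of non-ample members via $\xi_i \otimes \tau$.
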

\begin{proof}
  Apply \autoref{lem:NormalisedDegree} to \autoref{thm:NormalisedResultant}.
\end{proof}

\begin{rmk}
  \label{rmk:NormalisedDegreeIntersection}
  In the setting of \autoref{cor:NormalisedDegree}, let $\xi_I \in \bP(K)^I$ and let $\upsilon \in \bP(K)$ be ample.
  Define $\hat{\upsilon} = \upsilon \otimes \bigotimes_i \xi_i$.
  Finally, assume that $k$ is relatively algebraically closed in $K$, and let $X$ be the projective locus of $\hat{\upsilon}$ over $k$.

  We may then naturally identify each $\xi_i$ with an effective divisor on $X$, and $\deg^{K/k} \xi_I$ is their intersection number.
\end{rmk}

\section{The wedge operation on resultants}
\label{sec:ResultantWedge}

Recall the notions of a splittable polynomial and the wedge operation from \autoref{dfn:Splittable} and \autoref{dfn:Wedge}.

\begin{ntn}
  \label{ntn:MultiWedge}
  Let $A$ be a ring and $R \in A[X^*_I]$ a polynomial that is splittable in each $X^*_i$.
  If $f \in A[X_i]$ is homogeneous, we shall denote the wedge of $f$ and $R$ relative to the duality between $X_i$ and $X^*_i$ by
  \begin{gather*}
    f \wedge_i R \in A[X^*_{I \setminus \{i\}}].
  \end{gather*}
  When there is no risk of ambiguity, i.e., when the polynomial $f$ clearly determines $i$, we may allow ourselves to drop it and write $f \wedge R$.
\end{ntn}

Let $L/k$ be as in \autoref{sec:Resultant}.
By virtue of \autoref{thm:ResultantSplitting}, if $\xi_I \in \bP(L)^I$ is singly constrained, $i \in I$, and $f \in A[X_i]$ is homogeneous for some $k$-algebra $A$, then $\fR^k_{\xi_I}$ is splittable as a polynomial in $X^*_i$, and the wedge product $f \wedge \fR^k_{\xi_I} \in A[X^*_{I \setminus \{i\}}]$ is defined.
Notice that this includes the case (not covered by \autoref{thm:ResultantSplitting}) where $\xi_{I \setminus \{i\}}$ is (singly) constrained, so $X^*_i$ does not occur in $\fR^k_{\xi_I}$ and $f \wedge \fR^k_{\xi_I} = \bigl(\fR^k_{\xi_I}\bigr)^{\deg f}$.

For a point $\xi = [x] \in \bP^n(L)$ and $d \in \bN$, let $x^{\otimes d} = (x^\alpha : |\alpha| = d)$, namely the sequence of all monomials of degree $d$ evaluated at $x$, and $\xi^{\otimes d} = [x^{\otimes d}]$.

\begin{lem}
  \label{lem:ResultantWedgeVeronese}
  Let $L/k$ be a field extension, and let $\xi_I \in \bP^{n_I}(L)$ be singly constrained.
  Let $i \in I$ and $J = I \setminus \{i\}$, and assume that $\xi_J$ is unconstrained.
  Let $d_i \geq 1$, let $F_i = \sum_{|\alpha| = d_i} T^*_\alpha X_i^\alpha$ be an indeterminate polynomial of degree $d_i$, and let $\zeta_i = \xi_i^{\otimes d_i}$.

  Then $\zeta_i,\xi_J$ is a singly constrained family, and under the natural identification between polynomials of degree $d_i$ in $X$ and linear forms on $X^{\otimes d}$ we have
  \begin{gather*}
    F_i \wedge \fR^k_{\xi_I} \equiv \fR^k_{\zeta_i,\xi_J} \in k[T^*_i,X^*_J].
  \end{gather*}
\end{lem}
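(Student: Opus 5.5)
By \autoref{rmk:ResultantReduction}, I will first reduce to the case $I = \{i\}$. Since $\xi_J$ is unconstrained, I replace $k$ by $k_1 = k(\lambda_J)$, where $\lambda_J \in V^*(\xi_J)$ is generic over $k(\xi_I)$. Then $\xi_i$ is singly constrained over $k_1$, and $\fR^k_{\xi_I} \equiv \fR^{k_1}_{\xi_i}$. The wedge with $F_i$ only involves the variable $X^*_i$, so it commutes with this identification. The same reduction applies to $\zeta_i,\xi_J$: it is constrained over $k$ if and only if $\zeta_i$ is constrained over $k_1$. So it suffices to treat a single point $\xi = [x]$, singly constrained over $k$, and to show that $\zeta = \xi^{\otimes d}$ is singly constrained with $F \wedge \fR^k_\xi \equiv \fR^k_\zeta$.

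In the single-point case, \autoref{thm:ResultantSplitting} (with $J=\emptyset$, which is unconstrained) shows that $\xi$ is algebraic over $k$. The point $\zeta$ is rational over $k(\xi)$, hence also algebraic. Applying \autoref{cor:SinglyConstrained} with $J = \emptyset$, the point $\zeta$ is singly constrained. Conversely, $\xi$ is rational over $k(\zeta)$: normalising so that $x_s = 1$ for some coordinate $s$, we get $x_r = x^{\alpha}$ with $X^\alpha = X_r X_s^{d-1}$. Hence $k(\xi) = k(\zeta)$, and the two points have the same separable and inseparable degrees $D_s, D_i$ over $k$. Moreover the $k$-conjugates of $\zeta$ are exactly the points $[x_t^{\otimes d}]$, where $[x_t]$ runs over the conjugates of $\xi$, and these are distinct.

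Now I compute both sides using \autoref{thm:ResultantSplitting}. It gives $\fR^k_\xi \equiv \prod_{t<D_s} (X^* \cdot x_t)^{D_i}$, so by the splitting formula for the wedge,
\begin{gather*}
F \wedge \fR^k_\xi \equiv \prod_{t<D_s} F(x_t)^{D_i}.
\end{gather*}
Here the scalar $y$ from the splitting is absorbed into the equivalence. It also gives $\fR^k_\zeta \equiv \prod_{t<D_s} (T^* \cdot x_t^{\otimes d})^{D_i}$. Under the natural identification, $T^* \cdot x_t^{\otimes d} = F(x_t)$, so the two products coincide. The only point needing care is the scalar: $\fR^k_\xi$ is defined up to $k^\times$, and the splitting over $k^a$ introduces a factor $y \in (k^a)^\times$. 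Taking a representative of $\fR^k_\xi$ that is exactly $\prod_t (X^*\cdot x_t)^{D_i}$ (this is possible after rescaling the $x_t$ appropriately, by the proof of \autoref{thm:ResultantSplitting} with $x = (y,1)$) makes the wedge exactly $\prod_t F(x_t)^{D_i}$, which lies in $k[T^*]$ by functoriality.

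I expect the main obstacle to be the reduction step rather than the computation. One has to check that the wedge over $k$ agrees with the wedge over $k_1$ under the identification of \autoref{rmk:ResultantReduction}: a representative of $\fR^k_{\xi_I}$ in $k[X^*_I]$, seen in $k_1[X^*_i]$, must still have splittable coefficients with respect to the same morphism. This follows from the functoriality of the wedge, applied to the inclusion $k[X^*_J] \to k_1$ given by $X^*_J \mapsto \lambda_J$. This inclusion is injective, so an equivalence over $k_1$ between elements of the image of $k[T^*_i,X^*_J]$ pulls back to an equivalence up to $k_1^\times$. The remaining care is that the irreducibility of the pulled-back resultant over $k$ is exactly what \autoref{rmk:ResultantReduction} guarantees, so the factor in $k_1^\times$ can be taken in $k^\times$.
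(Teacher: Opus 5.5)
Your proposal is correct and is essentially the paper's proof. You reduce to $I=\{i\}$ via \autoref{rmk:ResultantReduction}, then compare the splittings of $\fR^k_\xi$ and $\fR^k_\zeta$ given by \autoref{thm:ResultantSplitting}, using $k(\xi)=k(\zeta)$ so that the conjugates correspond bijectively with the same multiplicity $D_i$.

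Your extra care with the scalar in the reduction step, which the paper skips, needs one more observation. Irreducibility of $\fR^k_{\zeta_i,\xi_J}$ only shows that the ratio lies in $k[X^*_J]$. To see that the ratio is a constant, specialise $F_i$ to $(X^*_i \cdot X_i)^{d_i}$: this turns $F_i \wedge \fR^k_{\xi_I}$ into $(\fR^k_{\xi_I})^{d_i}$, a power of an irreducible polynomial in which $X^*_i$ occurs, so it has no non-constant factor in $k[X^*_J]$.
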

\begin{proof}
  By \autoref{rmk:ResultantReduction}, we may assume that $I = \{i\}$ and drop the index $i$.
  Since $\xi$ is algebraic (equivalently, a constrained singleton family) over $k$, so is $\zeta = \xi^{\otimes d}$.
  The resultant $\fR^k_\xi$ factors as $\prod_{t<D} \, (X^* \cdot x_t)$, where $\xi_t = [x_t]$ are the $k$-conjugates of $\xi$, and $F \wedge \fR^k_\xi = \prod_{t<D} F(x_t)$.
  On the other hand, $\xi_t^{\otimes d}$ are the conjugates of $\zeta$, and each distinct conjugate is repeated with multiplicity $D_i$, where $D_i$ is the inseparable degree of $k(\xi) = k(\zeta)$ over $k$.
  Therefore $\fR^k_\zeta = \prod_{t<D} \, (F \cdot x_t^{\otimes d}) = \prod_{t<D} F(x_t)$ up to a factor in $k^\times$.
\end{proof}

\begin{lem}
  \label{lem:ResultantWedgeCommute}
  Let $L/k$ be a field extension, let $\xi_I \in \bP^{n_I}(L)$ be singly constrained, and let $i,j \in I$ be distinct.

  Then for any two homogeneous polynomials $f_i \in L[X_i]_{d_i}$ and $f_j \in L[X_j]_{d_j}$, we have
  \begin{gather*}
    f_j \wedge f_i \wedge \fR^k_{\xi_I} = f_i \wedge f_j \wedge \fR^k_{\xi_I}.
  \end{gather*}
  Equivalently, the same holds for any two indeterminate homogeneous polynomials in $X_i$ and in $X_j$.
\end{lem}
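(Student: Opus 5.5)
First, note what needs to be well defined. By \autoref{thm:ResultantSplitting}, $\fR^k_{\xi_I}$ is splittable in $X^*_i$, so $f_i \wedge \fR^k_{\xi_I} \in L[X^*_{I\setminus\{i\}}]$. For the outer wedge to make sense, this polynomial must still be splittable in $X^*_j$. The plan is to reduce everything to the case of indeterminate polynomials $F_i, F_j$. The equivalence with that case is just functoriality of the wedge: specialise the coefficient indeterminates $T^*_i, T^*_j$ to the coefficients of $f_i, f_j$. Splittability is a polynomial condition on coefficients, so it is preserved under specialisation. For indeterminate $F_i$, \autoref{lem:ResultantWedgeVeronese} identifies $F_i \wedge \fR^k_{\xi_I}$, up to a factor in $k^\times$, with the resultant $\fR^k_{\zeta_i,\xi_J}$, where $\zeta_i = \xi_i^{\otimes d_i}$. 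That resultant is again splittable in each variable, hence in $X^*_j$.

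Next I split into cases according to which subfamilies are constrained.

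\emph{Main case: $\xi_{I\setminus\{i\}}$ and $\xi_{I\setminus\{j\}}$ are both unconstrained.} Apply \autoref{lem:ResultantWedgeVeronese} at $i$ to get $F_i \wedge \fR^k_{\xi_I} \equiv \fR^k_{\zeta_i,\xi_J}$. The family $(\zeta_i,\xi_J)$ is singly constrained. The family obtained by removing $\xi_j$ is $(\zeta_i, \xi_{I\setminus\{i,j\}})$, and it is unconstrained: $\zeta_i$ and $\xi_i$ generate the same field, and by \autoref{cor:Unconstrained} unconstrainedness depends only on the dimensions $\dim_k$ of subfamilies. So the lemma applies again at $j$, and
\[
F_j \wedge F_i \wedge \fR^k_{\xi_I} \equiv \fR^k_{\zeta_i,\zeta_j,\xi_{I\setminus\{i,j\}}}.
\]
The right-hand side is symmetric in $i$ and $j$, so both orders of wedging agree up to a factor in $k^\times$.

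\emph{Degenerate cases.} If $\xi_{I\setminus\{i\}}$ is constrained, then $X^*_i$ does not occur in $\fR^k_{\xi_I}$, and $F_i\wedge\fR^k_{\xi_I} = (\fR^k_{\xi_I})^{d_i}$. Wedging with $F_j$ is multiplicative by \autoref{lem:Wedge}, so $F_j\wedge (\fR^k_{\xi_I})^{d_i} = (F_j\wedge\fR^k_{\xi_I})^{d_i}$, which equals $F_i \wedge (F_j \wedge \fR^k_{\xi_I})$ provided $X^*_i$ does not occur in $F_j\wedge\fR^k_{\xi_I}$. This holds in both sub-cases:
\begin{itemize}
\item If $\xi_{I\setminus\{j\}}$ is also constrained, then $F_j \wedge \fR^k_{\xi_I} = (\fR^k_{\xi_I})^{d_j}$, which does not involve $X^*_i$.
\item Otherwise, $F_j \wedge \fR^k_{\xi_I} \equiv \fR^k_{\zeta_j,\xi_{I\setminus\{j\}}}$ by \autoref{lem:ResultantWedgeVeronese}. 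Its set of occurring variables again excludes $X^*_i$, because $(\zeta_j,\xi_{I\setminus\{i,j\}})$ is constrained exactly when $(\xi_j,\xi_{I\setminus\{i,j\}})$ is.
\end{itemize}

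\emph{Main obstacle: upgrading $\equiv$ to $=$.} The statement asks for equality, and the wedge is defined exactly, not up to scalars. The resultant $\fR^k_{\xi_I}$ is fixed by a choice of representative; writing $\fR^k_{\xi_I} = c\,\prod_t (X^*_i\cdot x_t)$ over an extension, the scalar $c$ is carried through exactly by the wedge construction, so each side is an exact polynomial. The leftover scalar ambiguity comes only from the $\equiv$ in \autoref{lem:ResultantWedgeVeronese}. To fix it, I would compute both sides directly over an algebraically closed field containing $k(\lambda_{I\setminus\{i,j\}})$, after reducing by \autoref{rmk:ResultantReduction} to $I=\{i,j\}$ with the remaining variables generic. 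Write $\fR$ as $c\prod_{t}(X^*_i\cdot x_t)$ in $X^*_i$; each linear factor still depends on $X^*_j$, via \autoref{thm:ResultantSplitting} applied to the symmetric splitting. Then $F_i\wedge\fR = c\prod_t F_i(x_t(X^*_j))$. The comparison with the other order amounts to showing that both sides equal the same symmetric product $c'\prod_{s,t} F_i(\cdot)F_j(\cdot)$ over the finitely many joint points of the correspondence. The constants are then pinned down by evaluating at a specialisation of $F_i,F_j$ to $d_i$-th and $d_j$-th powers of linear forms. There the wedge becomes a power of a plain evaluation, and the scalars can be compared directly using \autoref{lem:Wedge}\autoref{item:WedgeProduct}. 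I expect this scalar bookkeeping to be the delicate point.
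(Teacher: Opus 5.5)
Your route is essentially the paper's. You dispose of the cases where $X^*_i$ or $X^*_j$ does not occur in $\fR=\fR^k_{\xi_I}$, then apply \autoref{lem:ResultantWedgeVeronese} twice so that both orders are instances of $\fR^k_{\zeta_i,\zeta_j,\xi_{I\setminus\{i,j\}}}$ and differ by some $\alpha\in k^\times$. Finally you specialise $F_i,F_j$ to powers of linear forms to force $\alpha=1$. Two points need fixing.

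\textbf{The case $d_i=0$ (or $d_j=0$) is missing.} Your main case invokes \autoref{lem:ResultantWedgeVeronese}, which assumes $d_i\geq 1$. Its conclusion really fails for $d_i=0$: then $\zeta_i\in\bP^0$ and $\fR^k_{\zeta_i,\xi_J}\equiv T^*$ is linear. On the other hand, $F_i\wedge\fR=(T^*)^{D_i}$ with $D_i=\deg_{X^*_i}\fR$, which is not equivalent to $T^*$ once $D_i\geq 2$. The paper treats this case directly. If $f_i=c$ is constant, then $f_j\wedge f_i\wedge\fR=(c^{D_i})^{d_j}$. Since $f_j\wedge\fR$ is homogeneous of degree $D_id_j$ in $X^*_i$, also $f_i\wedge f_j\wedge\fR=c^{D_id_j}$.

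\textbf{The scalar step.} Drop the proposed direct computation over the ``joint points of the correspondence''. As stated it is not justified: the $x_t$ are algebraic, not polynomial, functions of $X^*_j$, and expressing the second wedge as such a double product is essentially the whole content of the lemma. It is also not needed. You already know the two sides differ by $\alpha\in k^\times$, and the choice of representative of $\fR$ needs no bookkeeping, since both sides are computed from the same $\fR$.

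Specialise $F_i\mapsto(Y^*_i\cdot X_i)^{d_i}$ and $F_j\mapsto(Y^*_j\cdot X_j)^{d_j}$ with fresh indeterminates $Y^*_i,Y^*_j$. Use multiplicativity (\autoref{lem:Wedge}) and $(m\cdot X)\wedge g=g(m)$. Then both orders become $\fR(Y^*_i,Y^*_j,X^*_{I\setminus\{i,j\}})^{d_id_j}$. This is non-zero, and you should say so explicitly, since that is exactly what forces $\alpha=1$.

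A minor point: your sub-case analysis in the degenerate case is superfluous. If $\fR\in k[X^*_{I\setminus\{i\}}]$, then wedging in $X^*_j$ happens inside that ring and cannot produce $X^*_i$.
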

\begin{proof}
  Let $\fR = \fR^k_{\xi_I}$, $D_i = \deg_{X^*_i} \fR$ and $D_j = \deg_{X^*_j} \fR$.
  If $D_i = 0$, then
  \begin{gather*}
    f_j \wedge f_i \wedge \fR
    = \bigl( f_j \wedge \fR \bigr)^{d_i}
    = f_i \wedge f_j \wedge \fR.
  \end{gather*}
  If $d_i = 0$, then $\deg_{X^*_i} f_j \wedge \fR = D_id_j$, and
  \begin{gather*}
    f_j \wedge f_i \wedge \fR
    = f_i^{D_id_j}
    = f_i \wedge f_j \wedge \fR.
  \end{gather*}
  Similarly, if $D_j = 0$ or $d_j = 0$.

  We are left with the case where none of the degrees vanishes.
  In particular, $\xi_{I \setminus \{i,j\}}$ is unconstrained, and by \autoref{rmk:ResultantReduction} we may assume that $I = \{i,j\}$.
  Let $F_i$ be an indeterminate homogeneous polynomial in $X_i$ of degree $d_i$ and $\zeta_i = \xi_i^{\otimes d_i}$, and similarly for $F_j$ and $\zeta_j$.
  Then, by \autoref{lem:ResultantWedgeVeronese}, both $F_j \wedge F_i \wedge \fR$ and $F_i \wedge F_j \wedge \fR$ are instances of $\fR^k_{\zeta_i,\zeta_j}$, and can only differ by a factor $\alpha \in k^\times$.
  Substituting $(X^*_i \cdot X)^{d_i}$ for $F_i$ and $(X^*_j \cdot X_j)^{d_j}$ for $F_j$, either expression specialises to $\fR^{d_id_j}$, so $\alpha = 1$.
\end{proof}

\section{A characterisation of resultants}
\label{sec:ResultantAbstract}

For what follows, we consider a finite set $I$ and $n_I \in \bN^I$.
For each $i \in I$, we let $X_i$ be a tuple of $n_i+1$ indeterminates, and let $X^*_i$ be the dual indeterminates.

\begin{dfn}
  \label{dfn:ResultantAbstract}
  Let $A$ be a ring and $\fR \in A[X^*_I]$.
  It is a \emph{resultant} in $X^*_I$ over $A$ if for every $i \in I$:
  \begin{enumerate}
  \item
    \label{item:ResultantAbstractSplittable}
    As a polynomial in $X^*_i$, $\fR$ is splittable.
  \item
    \label{item:ResultantAbstractInduction}
    For every $A$-algebra $B$, and every homogeneous $f \in B[X_i]$, the polynomial $f \wedge \fR$ is a resultant in $X^*_{I \setminus \{i\}}$ over $B$.
  \item
    \label{item:ResultantAbstractCommute}
    In addition, if $j \in I \setminus \{i\}$ and $g \in B[X_j]$ is homogeneous, then
    \begin{gather*}
      g \wedge f \wedge \fR = f \wedge g \wedge \fR.
    \end{gather*}
  \end{enumerate}
\end{dfn}

\begin{dfn}
  \label{dfn:WeakResultantAbstract}
  Let $A$ be a ring and $\fR \in A[X^*_I]$.
  It is a \emph{weak resultant} over $A$ in $X^*_I$ if for every $i \in I$:
  \begin{enumerate}
  \item
    As a polynomial in $X^*_i$, $\fR$ is splittable.
  \item
    For every $j \in I \setminus \{i\}$, the polynomial $F \wedge \fR \in B[X^*_J]$ is splittable as a polynomial in $X^*_j$, where $F = \sum_{|\alpha|=n_j} T^*_\alpha X_i^\alpha$ is an indeterminate homogeneous polynomial in $X_i$, of degree $n_j$, and $B = A[T^*]$.
  \end{enumerate}
\end{dfn}

\begin{rmk}
  \label{rmk:ResultantAbstract}
  The following are immediate:
  \begin{enumerate}
  \item Every resultant is, in particular, a weak resultant.
  \item For fixed $I$ and $n_I$, being a (weak) resultant is defined by a family of homogeneous constraints (over $\bZ$) on the coefficients.
    In particular, it is preserved under ring morphisms.
  \item The zero polynomial is always a resultant; if $I = \emptyset$, then every $a \in A$ is a resultant over $A$; and if $I$ is a singleton, then every splittable polynomial in $A[X^*]$ is a resultant.
  \item If $J \subseteq I$ and $\fR$ is a resultant in $X^*_I$ over $A$, then it is also a resultant in $X^*_{I \setminus J}$ over $A[X^*_J]$.
  \item If $J \subseteq I$ and $\fR \in A[X^*_J] \subseteq A[X^*_I]$, then $\fR$ is a resultant in $X^*_I$ if and only if it is one in $X^*_J$.
  \item The product of any two (weak) resultants over $A$ in $X^*_I$ (or, by the previous item, in sub-families thereof) is again a (weak) resultant.
  \end{enumerate}
\end{rmk}

\begin{exm}
  \label{exm:ResultantAbstractWeak}
  Let us consider the elementary symmetric polynomials in projective dimension $n = 2$, in $D = 2$ indeterminates.
  A transcendence degree calculation reveals that the coefficients of $G_2(X^*)$ must satisfy a single homogeneous relation over $\bZ$.
  The precise relation is not very important, but let us say it is of degree $d$.
  Let $A = \bZ[\varepsilon] = \bZ[S]/(S^{2d+1})$.

  Let $m$ denote a projective dimension that may vary, so let us introduce indeterminates $X = (X_i : i \in \bN)$ and similarly $Y$ and dual indeterminates $X^*$ and $Y^*$.
  Let $R_m = \sum_{i\leq m} X^*_i Y^*_i \in \bZ[X^*,Y^*]$, and let $f_{k,m}(X) = \sum_{\alpha \in \bN^{m+1}, \ |\alpha|=k} T_\alpha X^\alpha$ be the indeterminate polynomial of degree $k$ in projective dimension $m$.
  Then $R_m$ is splittable in either $X^*$ or $Y^*$, being already linear, and $f_{k,m}(X) \wedge R_m = f_{k,m}(Y^*)$.
  Let us denote $f_{m,m}$ by $f_m$, for short.

  On the one hand, $f_2(Y^*)$ is not splittable, and, since $\varepsilon^{2d} \neq 0$, neither is $\varepsilon^2 f_2(Y^*) = f_2 \wedge (\varepsilon R_2)$.
  On the other hand, $f_{2d+1} \wedge (\varepsilon R_{2d+1}) = \varepsilon^{2d+1} f_{2d+1} \wedge R_{2d+1} = 0$ is splittable.
  Therefore, there exists a least $m \geq 3$ such that $f_m \wedge (\varepsilon R_m)$ is splittable.
  Then $f_{m-1} \wedge (\varepsilon R_{m-1})$ is not splittable, and \textit{a fortiori}, $f_{m-1,m} \wedge (\varepsilon R_m)$ is not.

  We conclude that $\varepsilon R_m$ is a weak resultant but not a resultant.
\end{exm}

On the face of it, \autoref{exm:ResultantAbstractWeak} makes a non-trivial use of the nilpotent element $\varepsilon$, and indeed, we are going to show that over a reduced ring, weak resultants and resultants agree.
Since both notions are defined by polynomial equations, it will suffice to show this over a field.
More precisely, we will show that a polynomial over a field $k$ is a (weak) resultant if and only if its irreducible factors are of the form $\fR^k_{\xi_I}$, as defined in \autoref{dfn:Resultant}.

\begin{lem}
  \label{lem:ResultantAbstractSplittable}
  Let $k$ be a field and $L/k$ an algebraically closed extension of infinite transcendence degree.
  Let $R_0, R \in k[X^*_I]$ be non-zero polynomials.
  Assume that $R_0$ is an irreducible factor of $R$, and let $\lambda_I$ be a generic root of $R_0$ with coefficients in $L$.
  Finally, let $i \in I$, assume that $R$ is splittable in $X^*_i$, and let $J = I \setminus \{i\}$.
  \begin{itemize}
  \item If $X^*_i$ does not occur in $R_0$, then $R(X^*_i,\lambda_J) = R_0(X^*_i,\lambda_J) = 0$.
  \item If $X^*_i$ occurs in $R_0$, then $\lambda_J$ is free over $k$, and there exists a unique $\xi_i = [x_i] \in \bP^{n_i}(L)$ such that $X^*_i \cdot x_i \mid R(X^*_i,\lambda_J)$ and $\lambda_i(x_i) = 0$.
    Moreover, $X^*_i \cdot x_i \mid R_0(X^*_i,\lambda_J)$.
  \end{itemize}
\end{lem}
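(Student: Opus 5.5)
The plan is to treat the two cases separately. Throughout, $\lambda_I$ is a generic point of the hypersurface $R_0 = 0$, so $\dim_k \lambda_I = \sum_{i\in I}(n_i+1) - 1$, and $R(\lambda_I) = 0$ because $R_0 \mid R$.

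\textbf{First case.} Suppose $X^*_i$ does not occur in $R_0$. Then $R_0 \in k[X^*_J]$, so $R_0(X^*_i,\lambda_J) = R_0(\lambda_J) = 0$. Since $R_0 \mid R$ in $k[X^*_I]$, write $R = R_0 Q$ and substitute $\lambda_J$; this gives $R(X^*_i,\lambda_J) = 0$ identically in $X^*_i$.

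\textbf{Second case: freeness of $\lambda_J$.} Suppose $X^*_i$ occurs in $R_0$. The ideal of $\lambda_I$ over $k$ is the prime ideal $(R_0)$. If $\lambda_J$ satisfied a non-trivial relation $P \in k[X^*_J]$, then $R_0 \mid P$. This is impossible, since $P$ does not involve $X^*_i$ while $R_0$ does (compare degrees in $X^*_i$). So $\lambda_J$ is free over $k$, of transcendence degree $\sum_{j\in J}(n_j+1)$. Consequently $\lambda_i$ has transcendence degree exactly $n_i$ over $k(\lambda_J)$, and its ideal over $k(\lambda_J)$ is generated by $R_0(X^*_i,\lambda_J)$. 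This polynomial is non-zero (no relation on $\lambda_J$) and irreducible over $k(\lambda_J)$, because $R_0$ is irreducible and primitive with respect to $k[X^*_J]$, using Gauss's lemma.

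\textbf{Second case: existence and uniqueness of $\xi_i$.} $R(X^*_i,\lambda_J)$ is non-zero: otherwise every coefficient (a polynomial in $X^*_J$) vanishes at the free point $\lambda_J$, so $R = 0$. By \autoref{dfn:Splittable} splittability is preserved under the specialisation $X^*_J \mapsto \lambda_J$. So over the algebraically closed $L$ we get
\[
R(X^*_i,\lambda_J) = y \prod_{t<D} (X^*_i \cdot x_t), \qquad y \neq 0.
\]
Since $R(\lambda_i,\lambda_J) = 0$, some factor satisfies $\lambda_i(x_t) = 0$; this gives existence. The same argument applied to $R_0(X^*_i,\lambda_J)$, which is a factor of a non-zero splittable polynomial over the field $k(\lambda_J)^a$, shows it is itself splittable by \autoref{lem:Wedge}\autoref{item:SplittableFactor}. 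It therefore has a linear factor $X^*_i\cdot x$ with $\lambda_i(x) = 0$; this will give the ``moreover'' clause once uniqueness is known.

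Uniqueness is the main obstacle. Each $[x_t]$ is algebraic over $k(\lambda_J)$, being a root factor of a polynomial over $k(\lambda_J)$. Suppose $\lambda_i(x_t) = 0$. Then $\lambda_i \in V^*(\xi_t)$, which has dimension $n_i$ over $k(\lambda_J,\xi_t)$. But $\lambda_i$ also has transcendence degree $n_i$ over $k(\lambda_J)$, so $\lambda_i$ is generic in $V^*(\xi_t)$ over $k(\lambda_J)^a$.

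Now suppose two distinct points $[x_s] \neq [x_t]$ both satisfy this. Then $\lambda_i$ lies in $V^*(\xi_s) \cap V^*(\xi_t)$, a subspace of dimension $n_i - 1$ defined over $k(\lambda_J)^a$. This contradicts $\dim_{k(\lambda_J)}\lambda_i = n_i$.

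Hence exactly one point $\xi_i$ of the multiset $[R(X^*_i,\lambda_J)]$ is annihilated by $\lambda_i$, and the factor found in $R_0(X^*_i,\lambda_J)$ must be that same point. So $X^*_i\cdot x_i \mid R_0(X^*_i,\lambda_J)$, up to a scalar.
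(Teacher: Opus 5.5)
Your proof is correct and takes essentially the same approach as the paper. You get freeness of $\lambda_J$ because $X^*_i$ occurs in the unique relation $R_0$, then non-vanishing and splitting of $R(X^*_i,\lambda_J)$, existence from $R(\lambda_I)=0$, uniqueness from genericity of $\lambda_i$, and the moreover clause from $R_0(\lambda_I)=0$. The paper only asserts uniqueness in one line; you justify it with a dimension count, namely that $\lambda_i$ has transcendence degree $n_i$ over $k(\lambda_J)$ and so cannot lie in the $(n_i-1)$-dimensional space $V^*(\xi_s)\cap V^*(\xi_t)$ defined over $k(\lambda_J)^a$.
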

\begin{proof}
  The first case is clear, so we consider the second.
  Since $R_0$ is the unique algebraic relation over $k$ satisfied by $\lambda_I$, and $X^*_i$ occurs there, the remaining $\lambda_J$ satisfy no algebraic relation over $k$.
  In particular, $R(X^*_i,\lambda_J) \neq 0$, and by hypothesis, it splits as
  \begin{gather*}
    R(X^*_i,\lambda_J) = \prod_{t<d} X^*_i \cdot x_{i,t},
  \end{gather*}
  where $\xi_{i,t} = [x_{i,t}] \in \bP^{n_i}(L)$.
  This factorisation is unique up to permutation of the family $\Xi = (\xi_{i,t} : t < d)$.

  Since $R(\lambda_i,\lambda_J) = 0$, there exists $t < d$ such that $\lambda_i(x_{i,t}) = 0$.
  Since $\lambda_i$ is a generic root of $R(X^*_i,\lambda_J)$, this determines $\xi = \xi_{i,t}$ uniquely (although it may appear more than once in $\Xi$).
  The moreover part holds since $R_0(\lambda_i,\lambda_J) = 0$.
\end{proof}

\begin{lem}
  \label{lem:ResultantAbstractAlgebraic}
  Let $k$ be a field and $L/k$ an algebraically closed extension of infinite transcendence degree.
  Let $X$ and $Y$ represent homogeneous coordinates in projective dimensions $n$ and $m$, respectively.
  Let $\fR \in k[X^*,Y^*]$ be a non-zero weak resultant, and let $\fR_0$ be an irreducible factor of $\fR$ in which both $X^*$ and $Y^*$ occur.
  Let $\lambda,\mu$ be a generic root of $\fR_0(X^*,Y^*)$, with coefficients in $L$.

  Finally, let $\xi = [x] \in \bP^n(L)$ be the unique point, as per \autoref{lem:ResultantAbstractSplittable}, such that $\lambda(x) = 0$ and $X^* \cdot x \mid \fR_0(X^*,\mu)$.
  Then $\xi$ is algebraic over $k(\lambda)$.
\end{lem}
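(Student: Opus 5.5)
The plan is to argue by contradiction, using the weak resultant condition in the direction $X \to Y$. Concretely, let $F = \sum_{|\alpha|=m} T^*_\alpha X^\alpha$ be the indeterminate polynomial of degree $m$ in $X$. Then $G(Y^*) = F \wedge_X \fR \in k[T^*][Y^*]$ is splittable in $Y^*$, so over $k(T^*)^a$ it factors as $\prod_s (Y^* \cdot y_s)$. This step may need care if $G$ is not what one expects, but $G \neq 0$ since $\fR \neq 0$.

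First I record the genericity facts. By \autoref{lem:ResultantAbstractSplittable} applied on both sides, $\lambda$ and $\mu$ are each free over $k$. Since $(\lambda,\mu)$ is a generic root of the single irreducible relation $\fR_0$, $\mu$ is a generic point over $k(\lambda)$ of the $m$-dimensional hypersurface $H_\lambda = \{\fR_0(\lambda,Y^*) = 0\}$. I write $\xi = \xi(\mu)$ for the point of \autoref{lem:ResultantAbstractSplittable}; it is algebraic over $k(\lambda,\mu)$, being a root of the split polynomial $\fR(X^*,\mu)$. So it suffices to show $e = \dim_{k(\lambda)} \xi = 0$. Suppose instead $e \geq 1$. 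Let $W$ be the locus of $\xi$ over $k(\lambda)^a$, an $e$-dimensional subvariety of the hyperplane $\{\lambda = 0\} \subseteq \bP^n$. The fibres of $\mu \mapsto \xi(\mu)$ on $H_\lambda$ then have dimension $m - e \le m-1$.

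The main step is to compare zero sets. Write $\fR(X^*,\mu') = \prod_t (X^* \cdot x_t(\mu'))$ for $\mu'$ generic on $H_\lambda$; one of the $x_t$ is $\xi(\mu')$. Then
\begin{gather*}
  G(\mu') = \prod_t F(x_t(\mu')),
\end{gather*}
so $G(\mu') = 0$ whenever $F(\xi(\mu')) = 0$. I now use the freedom in $F$ given by degree exactly $m$: the space of degree $m$ forms separates at least $m$ general points of $W$. Choose $m$ generic points $\mu_1,\ldots,\mu_m$ of $H_\lambda$ over $k(\lambda)$, with images $\xi_r = \xi(\mu_r)$. Then specialise $T^*$ to a point $t^*$ generic subject to $F(\xi_r) = 0$ for all $r$. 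Splittability is a homogeneous polynomial condition and is preserved under specialisation, so $G_{t^*}(Y^*)$ is still a product of linear forms. It vanishes on the whole fibres over $\xi_1,\ldots,\xi_m$, and more generally on the preimage in $H_\lambda$ of $W \cap \{F_{t^*} = 0\}$. This preimage is a codimension-one subset of $H_\lambda$, hence must be contained in a finite union of hyperplanes $y_s^\perp$. Each irreducible component of it is therefore linear-section-like inside $H_\lambda$.

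To reach a contradiction I would let the $\xi_r$, and hence $t^*$, vary. The hyperplanes $y_s(t^*)^\perp$ are algebraic over $k(\lambda, t^*)$. They must contain the fibres over the $\xi_r$, which for $e \geq 1$ move independently with the $\mu_r$. One then counts dimensions: there are $m$ independent generic points $\mu_r$ of $H_\lambda$, each lying on some $y_s^\perp$, while the family of possible $y_s$ is controlled by $t^*$. This should force $H_\lambda$ itself, or a component of the preimage, to be a hyperplane independent of $F$. That would make $\xi(\mu)$ locally constant in $\mu$, contradicting $e \geq 1$.

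I expect this last dimension count to be the main obstacle. In particular, I must explain why degree exactly $m$ is needed, since \autoref{exm:ResultantAbstractWeak} shows degree is delicate, and I must handle the multiplicities among the $x_t$. My first attempt would be to fix $\mu_1,\ldots,\mu_{m-1}$, let $\mu_m$ vary, and show that the hyperplane through the relevant fibres cannot accommodate the extra freedom when $e \geq 1$.
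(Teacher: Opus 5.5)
Your proposal has a genuine gap: it stops exactly where the contradiction has to be produced. Suppose we grant that the preimage in $H_\lambda$ of $W \cap \{F_{t^*}=0\}$ lies in finitely many hyperplanes $y_s^\perp$. That containment is not contradictory by itself. The ``dimension count'' that should force $H_\lambda$, or a component of the preimage, to be a hyperplane independent of $F$ is never set up, and you flag it yourself as the main obstacle.

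Nothing in your sketch relates the number of parameters of the $y_s$ to the number of conditions $W$ imposes on forms of degree $m$, and that comparison is the whole point. The fact you invoke (degree-$m$ forms separate $m$ general points of $W$) is not the relevant one. What is needed is that an irreducible positive-dimensional variety imposes at least $m+1$ independent conditions on forms of degree $m$, set against the at most $m$ parameters of a point $y \in \bP^m$. Your setup also discards the genericity that drives such a count. Specialising $T^*$ to forms through $\xi_1,\ldots,\xi_m$, which are defined over $k(\lambda,\mu_1,\ldots,\mu_m)$, destroys the genericity of $F$ over $k$. Freezing $\lambda$ and working inside $H_\lambda$ throws away the final step of the argument, as explained below.

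The missing idea, as the paper does it, is to take a single $f$ generic in $V^*_m(\xi)$.
\begin{enumerate}
\item Since $\lambda$ is free over $k$ and generic in $V^*(\xi)$, $\xi$ is not algebraic over $k$. So $f$ is generic over $k$, and $\dim_k f = N = \binom{n+m}{m}$.
\item $f \wedge \fR_0$ divides $f \wedge \fR$, so it is splittable in $Y^*$. It vanishes at $\mu$, since $X^*\cdot x \mid \fR_0(X^*,\mu)$ and $f(x)=0$. This yields a factor $Y^*\cdot y$ with $\mu(y)=0$ and $y$ algebraic over $k(f)$.
\item $f$ has exactly one relation with $\xi$, and $\xi$ is algebraic over $k(\mu)$, so $\dim_{k(f)}\mu = m$. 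Hence $m = \dim_{k(f,y)}\mu \le \dim_{k(y)}\mu \le m$. Thus $\mu$ and $f$, and therefore $\xi$ and $f$, are independent over $k(y)$.
\item Consequently every form in the locus of $f$ over $k(y)^a$ vanishes on the locus $W'$ of $\xi$ over $k(y)^a$. This gives $N-m \le \dim_{k(y)} f \le N - \lindim_L L[W']_m$. That forces $W'$ to be a point, i.e.\ $\xi$ is algebraic over $k(y)$.
\item Then $m = \dim_{k(y)}\mu \le \dim_{k(\xi)}\mu$ and $m+1 = \dim_k\xi + \dim_{k(\xi)}\mu$ give only $\dim_k \xi \le 1$.
\item Only at this point does one use that $\lambda$ is generic over $k$ with $\lambda(\xi)=0$, to conclude $\dim_{k(\lambda)}\xi = 0$. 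Fixing $\lambda$ from the start works against this step.
\end{enumerate}
Step 4 is also where the degree being exactly $m$ enters, matching the $m$ parameters of $y$. Your sketch leaves this unexplained.
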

\begin{proof}
  Since $(\lambda,\mu)$ is a generic root of $\fR_0$, each of $\lambda$ and $\mu$, separately, is entirely generic over $k$.
  In addition, $\lambda$ is a generic root of $\fR_0(X^*,\mu)$, of which $X^* \cdot x$ is a factor, so $\lambda$ is generic in $V^*(\xi)$, and $\xi$ cannot be algebraic over $k$.
  Fix a generic $f \in V^*_m(\xi)$, i.e., $f \in L[X]_m$ that vanishes at $\xi$ and is generic such.
  Since $\xi$ is not algebraic over $k$, $f$ is a generic polynomial over $k$, i.e., $\dim_k f = N = \binom{n+m}{m}$.

  Since $f$ is generic over $k$, $\fS(Y^*) = f \wedge \fR(X^*,Y^*)$ is non-zero.
  Since $\fR_0$ is a factor of $\fR$, it is also splittable in $X^*$, and $\fS_0 = f \wedge \fR_0$ is a factor of $\fS$.
  On the other hand $X^* \cdot x \mid \fR_0(X^*,\mu)$ and $f(x) = 0$, so $\fS_0(\mu) = f \wedge \fR_0(X^*,\mu) = 0$.

  Since $\fR$ is a weak resultant, $\fS$ is splittable in $Y^*$, and therefore so is $\fS_0$.
  Since $\fS_0(\mu) = 0$, $\fS_0$ admits a factor $Y^* \cdot y$ such that $\mu(y) = 0$.
  We may assume that at least one coordinate of $y$ equals one.
  The point $\xi$ is algebraic over $k(\mu)$, since $X^* \cdot x$ is one of finitely many factors of $\fR_0(X^*,\mu)$, and similarly, $y$ is algebraic over $k(f)$.

  Since $f$ is generic vanishing at $\xi$ it satisfies a unique algebraic relation over $k$ with $\xi$.
  Since $\xi$ is algebraic over $k(\mu)$, $f$ satisfies with $\mu$ a unique algebraic relation over $k$.

  Since $\mu(y) = 0$, we have
  \begin{gather*}
    m
    = \dim_{k(f)} \mu
    = \dim_{k(f,y)} \mu
    \leq \dim_{k(y)} \mu
    \leq m.
  \end{gather*}
  Consequently, $\mu$ is generic vanishing at $y$.
  Similarly, $f$ and $\mu$ are algebraically independent over $k(y)$, and \textit{a fortiori} so are $f$ and $\xi$.

  Let $W \subseteq \bP^n(L)$ be the projective locus of $\xi$ over $k(y)^a$.
  Let $E \subseteq L^N = L[X]_m$ be the affine locus of $f$ over $k(y)^a$.
  Since $f$ and $\xi$ are algebraically independent over $k(y)$, the pair $(f,\xi)$ is generic in $E \times W$, so every member of $E$ vanishes on $W$.
  Therefore,
  \begin{gather*}
    \dim_{k(y)} f \leq \lindim_L E \leq \lindim_L \bigl\{ g \in L[X]_m : g \ \text{vanishes on} \ W \bigr\} = N - \lindim_L L[W]_m.
  \end{gather*}
  If $W$ is not the singleton $\{\xi\}$, then $\lindim_L L[W]_m \geq m+1$, which is impossible, since
  \begin{gather*}
    \dim_{k(y)} f \geq \dim_k f - \dim_k y \geq N - m.
  \end{gather*}
  Therefore $W = \{\xi\}$, so $\xi$ is algebraic over $k(y)$.

  We have already observed that $\xi$ is algebraic over $k(\mu)$, so
  \begin{gather*}
    m + 1
    = \dim_k \mu
    = \dim_k \mu,\xi
    = \dim_k \xi + \dim_{k(\xi)} \mu.
  \end{gather*}
  On the other hand, since $\xi$ is also algebraic over $k(y)$, we have
  \begin{gather*}
    m
    = \dim_{k(y)} \mu
    = \dim_{k(y,\xi)} \mu
    \leq \dim_{k(\xi)} \mu.
  \end{gather*}
  Therefore, $\dim_k \xi \leq 1$.

  Finally, $\lambda$ is generic over $k$, and vanishes at $\xi$.
  Therefore $\dim_k \xi = 1$ and $\dim_{k(\lambda)} \xi = 0$.
\end{proof}

\begin{lem}
  \label{lem:ResultantAbstract}
  Let $k$ be a field and $L/k$ an algebraically closed extension of infinite transcendence degree.
  Let $\fR \in k[X^*_I]$ be a non-zero weak resultant.
  Then every irreducible factor of $\fR$ is of the form $\fR^k_{\xi_{I_0}}$ where $I_0 \subseteq I$ and $\xi_{I_0}$ is a minimally constrained family over $k$.
\end{lem}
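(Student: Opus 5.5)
Let $\fR_0$ be an irreducible factor of $\fR$, let $I_0 \subseteq I$ be the set of indices $i$ such that $X^*_i$ occurs in $\fR_0$, and let $\lambda_I$ be a generic root of $\fR_0$ in $L$. The plan is to build a point $\xi_i$ for each $i \in I_0$ and show $\fR_0 \equiv \fR^k_{\xi_{I_0}}$. If $I_0 = \emptyset$, then $\fR_0$ is a non-zero constant, which is impossible for an irreducible factor. So we may assume $I_0 \neq \emptyset$, and we may restrict attention to $X^*_{I_0}$. Indeed, $\lambda_{I \setminus I_0}$ is free over $k(\lambda_{I_0})$, so by specialising or adjoining these variables to the base field (in the spirit of \autoref{rmk:ResultantAbstract}) we may assume $I = I_0$. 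Since $\fR_0$ divides $\fR$ and $\fR$ is splittable in each $X^*_i$, \autoref{lem:Wedge}\autoref{item:SplittableFactor} shows that $\fR_0$ is itself splittable in each $X^*_i$.

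For each $i \in I_0$, apply \autoref{lem:ResultantAbstractSplittable} with $J = I_0 \setminus \{i\}$. Since $X^*_i$ occurs in $\fR_0$, it gives that $\lambda_J$ is free over $k$, and it produces a unique $\xi_i = [x_i]$ with $\lambda_i(x_i) = 0$ and $X^*_i \cdot x_i \mid \fR_0(X^*_i,\lambda_J)$. As in the proof of \autoref{lem:ResultantAbstractAlgebraic}, $\lambda_i$ is then a generic root of $X^*_i\cdot x_i$ over $k(\lambda_J,\xi_i)$, and $\xi_i$ is algebraic over $k(\lambda_J)$. The aim is to show that $\lambda_{I_0}$ is generic in $V^*(\xi_{I_0})$ over $k(\xi_{I_0})$. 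Once this holds, $\xi_{I_0}$ is constrained, since $\lambda_{I_0}$ satisfies $\fR_0$. Each $\xi_{I_0 \setminus\{i\}}$ is unconstrained, since the corresponding $\lambda$'s are free. Hence $\xi_{I_0}$ is minimally constrained, it is singly constrained by \autoref{cor:SinglyConstrainedOne}, and $\fR_0$, being an irreducible relation of $\lambda_{I_0}$, equals $\fR^k_{\xi_{I_0}}$ up to a constant.

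Genericity reduces to a transcendence-degree count. We have $\dim_k \lambda_{I_0} = \sum_{i\in I_0}(n_i+1) - 1$. We need
\begin{gather*}
  \dim_{k(\xi_{I_0})} \lambda_{I_0} = \sum_{i \in I_0} n_i,
\end{gather*}
and for this it suffices to show that $\xi_{I_0}$ is algebraic over $k(\lambda_{I_0})$ and that $\dim_k \xi_{I_0} = |I_0|-1$. Combined with $\lambda_{I_0} \in V^*(\xi_{I_0})$, this gives the count. The key step, and the main obstacle, is to show that each $\xi_i$ is algebraic over $k(\lambda_i)$, or at least over $k(\lambda_{I_0})$ in a way that lets us run the counting argument of \autoref{cor:Unconstrained}. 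For $|I_0| = 2$ this is exactly \autoref{lem:ResultantAbstractAlgebraic}.

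For general $I_0$, fix two indices $i \neq j$ and adjoin the remaining $\lambda_{I_0 \setminus\{i,j\}}$, which are free, to the base field: set $k_1 = k(\lambda_{I_0\setminus\{i,j\}})$. Then $\fR_0(X^*_i,X^*_j,\lambda_{I_0\setminus\{i,j\}})$ has an irreducible factor over $k_1$ with generic root $(\lambda_i,\lambda_j)$, and we want to apply \autoref{lem:ResultantAbstractAlgebraic} over $k_1$. This needs $\fR(X^*_i,X^*_j,\lambda_{\text{rest}})$ to be a weak resultant in two variables over $k_1$. Splittability in $X^*_i$ and $X^*_j$ passes to the specialisation, but the wedge condition involves $F \wedge \fR$ with $F$ of degree $n_j$, and this has to commute with specialising the other variables. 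Checking this is where I expect the real work. The first thing to try is to use functoriality of the wedge (the substitution $X^*_l \mapsto \lambda_l$ is a ring morphism out of $k[X^*_{\text{rest}}]$) together with the preservation of splittability under morphisms. With this in hand, \autoref{lem:ResultantAbstractAlgebraic} gives that $\xi_i$ is algebraic over $k_1(\lambda_i)$, hence over $k(\lambda_{I_0\setminus\{j\}})$, and in particular over $k(\lambda_{I_0})$. It also gives $\dim_{k_1}\xi_i = 1$ within that pair.

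With all $\xi_i$ algebraic over $k(\lambda_{I_0})$, count as follows:
\begin{gather*}
  \sum_{i\in I_0}(n_i+1) - 1 = \dim_k \lambda_{I_0} = \dim_k \xi_{I_0} + \dim_{k(\xi_{I_0})}\lambda_{I_0} \le \dim_k \xi_{I_0} + \sum_{i\in I_0} n_i .
\end{gather*}
This gives $\dim_k \xi_{I_0} \ge |I_0|-1$. On the other hand, $\fR_0(\lambda_{I_0}) = 0$ forces $\xi_{I_0}$ to be constrained: otherwise $V^*(\xi_{I_0})$ would contain $\lambda_{I_0}$ as a specialisation of a free point, which contradicts $\dim_k\lambda_{I_0}$ being deficient only when genericity holds. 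Equality must then hold throughout, using \autoref{cor:Unconstrained} for the reverse bound $\dim_k \xi_{I_0} \le |I_0|-1$. This gives the required genericity and completes the proof.
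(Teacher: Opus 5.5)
Your architecture matches the paper's. You define $\xi_i$ by \autoref{lem:ResultantAbstractSplittable}, apply \autoref{lem:ResultantAbstractAlgebraic} over $k_1 = k(\lambda_{I_0 \setminus \{i,j\}})$, and finish by counting transcendence degrees. Transferring the weak-resultant hypothesis to $k_1$ is indeed routine by functoriality of the wedge and of splittability, and the paper leaves it implicit.

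\textbf{The gap is in the final count.} The property you call the key step, that each $\xi_i$ is algebraic over $k(\lambda_{I_0})$, was already available from your second paragraph, since $\xi_i$ is algebraic over $k(\lambda_{I_0 \setminus \{i\}})$. It only gives the lower bound $\dim_k \xi_{I_0} \geq |I_0|-1$. The upper bound is the whole content of the lemma, and your argument for it is circular. Knowing $\lambda_{I_0} \in V^*(\xi_{I_0})$ and $\fR_0(\lambda_{I_0}) = 0$ tells you nothing about whether $\xi_{I_0}$ is constrained unless $\lambda_{I_0}$ is known to be \emph{generic} in $V^*(\xi_{I_0})$, because a specialisation of a free point may satisfy relations. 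That genericity is exactly what you are trying to prove.

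The rest of that step is fine. The proper subfamilies $\xi_{I_0 \setminus \{i\}}$ are unconstrained, because $\lambda_{I_0 \setminus \{i\}}$ is free over $k$ and is a specialisation of a generic point of $V^*(\xi_{I_0 \setminus \{i\}})$. So once $\xi_{I_0}$ is known to be constrained, \autoref{cor:Unconstrained} does give $\dim_k \xi_{I_0} \leq |I_0|-1$. Constrainedness is precisely what is missing.

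\textbf{The fix.} Use the stronger conclusion you extracted from \autoref{lem:ResultantAbstractAlgebraic} and then weakened. Since $i \neq j$ were arbitrary, $\xi_j$ is algebraic over $k(\lambda_{I_0 \setminus \{i\}})$ for every $j \neq i$. Hence the \emph{whole} family $\xi_{I_0}$ is algebraic over $k(\lambda_{I_0 \setminus \{i\}})$. As $\fR_0$ is the only relation of $\lambda_i$ over $k(\lambda_{I_0 \setminus \{i\}})$, this gives
\begin{gather*}
  n_i
  = \dim_{k(\lambda_{I_0 \setminus \{i\}})} \lambda_i
  = \dim_{k(\lambda_{I_0 \setminus \{i\}},\xi_{I_0})} \lambda_i
  \leq \dim_{k(\xi_i)} \lambda_i
  \leq n_i .
\end{gather*}
Now take any enumeration $i_1,\ldots,i_r$ of $I_0$. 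Each term satisfies $\dim_{k(\xi_{I_0},\lambda_{i_1},\ldots,\lambda_{i_{t-1}})} \lambda_{i_t} \geq n_{i_t}$, and summing these yields $\dim_{k(\xi_{I_0})} \lambda_{I_0} = \sum_{i \in I_0} n_i$. That is, $\lambda_{I_0}$ is generic in $V^*(\xi_{I_0})$. This is exactly how the paper concludes, and everything after that point in your proposal then goes through.
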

\begin{proof}
  Let $\fR_0$ be an irreducible factor of $\fR$, and let $I_0$ be the set of $i \in I$ such that $X^*_i$ occurs in $\fR_0$.
  Let $\lambda_I$ be a generic root of $\fR_0$.
  For $i \in I_0$, let $\xi_i = [x_i]$ be as per \autoref{lem:ResultantAbstractSplittable}.

  Let $i \in I_0$ and $J = I_0 \setminus \{i\}$.
  We have $\fR_0 \in k[X^*_i,X^*_J]$, and $X^*_i \cdot x_i \mid \fR_0(X^*_i,\lambda_J)$, so $\xi_i$ is algebraic over $k(\lambda_J)$.
  If $j \in J$, then, applying \autoref{lem:ResultantAbstractAlgebraic} to $\fR_0$ and $\fR$ over $k(\lambda_{J \setminus \{j\}})$, we see that $\xi_j$ is also algebraic over $k(\lambda_J)$.
  Therefore, the entire family $\xi_{I_0}$ is algebraic over $\lambda_J$.

  The identity $\fR_0(\lambda_{I_0}) = 0$ is, by hypothesis, the unique relation satisfied by $\lambda_i$ over $k(\lambda_J)$.
  Therefore
  \begin{gather*}
    n_i
    = \dim_{k(\lambda_J)} \lambda_i
    = \dim_{k(\lambda_J,\xi_{I_0})} \lambda_i
    \leq \dim_{k(\xi_i)} \lambda_i
    \leq n_i.
  \end{gather*}
  In other words, $\lambda_i \in V^*(\xi_i)$ is generic over $k(\lambda_J,\xi_{I_0})$.

  Since $i \in I_0$ was arbitrary, $\lambda_{I_0} \in V^*(\xi_{I_0})$ is generic.
  On the other hand, it satisfies a unique algebraic relation over $k$, namely, $\fR_0$.
  Therefore the family $\xi_{I_0}$ is singly constrained, and $\fR_0 = \fR^k_{\xi_{I_0}}$.
  Since $X^*_i$ occurs in $\fR_0$ for every $i \in I_0$, the family is minimally constrained.
\end{proof}

\begin{thm}
  \label{thm:ResultantAbstract}
  Let $k$ be a field, and $\fR \in k[X^*_I]$.
  Let also $L \supseteq k$ be an algebraically closed extension of infinite transcendence degree.
  Then the following are equivalent:
  \begin{enumerate}
  \item The polynomial $\fR$ is a resultant in $X^*_I$ over $k$.
  \item The polynomial $\fR$ is a weak resultant in $X^*_I$ over $k$.
  \item Every irreducible factor of $\fR$ in $k[X^*_I]$ is of the form $\fR^k_{\xi_J}$, where $J \subseteq I$ and $\xi_J \in \bP^{n_J}(L)$ is minimally constrained.
  \end{enumerate}
  Moreover, if $n_i \neq 0$ for all $i$, we may extend each $\xi_J$ to a singly constrained family $\xi_I$, with the same resultant over $k$.
\end{thm}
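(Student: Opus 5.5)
The plan is to prove the cycle (1)$\Rightarrow$(2)$\Rightarrow$(3)$\Rightarrow$(1), and then treat the ``moreover'' part separately.

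The implication (1)$\Rightarrow$(2) is \autoref{rmk:ResultantAbstract}. For (2)$\Rightarrow$(3), if $\fR = 0$ there is nothing to prove, since the zero polynomial has no irreducible factors. Otherwise \autoref{lem:ResultantAbstract} applies directly and gives exactly (3).

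For (3)$\Rightarrow$(1), write $\fR = c \prod_s \fR_s$ with $c \in k$ and each $\fR_s \equiv \fR^k_{\xi_{J_s}}$. By \autoref{rmk:ResultantAbstract}, constants are resultants, products of resultants are resultants, and a resultant in $X^*_{J}$ is one in $X^*_I$. So it suffices to show that $\fR^k_{\xi_J}$ is a resultant in $X^*_J$ whenever $\xi_J$ is singly constrained. I would prove this by induction on $|J|$, establishing the slightly stronger claim that the conclusion holds over every field $k$ and is preserved by base change.

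\begin{itemize}
\item \emph{Splittability.} Each $X^*_i$-splittability is given by \autoref{thm:ResultantSplitting} (applied over $k(\lambda_{J\setminus\{i\}})$ via \autoref{rmk:ResultantReduction}), or is trivial when $X^*_i$ does not occur.
\item \emph{Induction and commutation conditions.} Since the resultant conditions are polynomial identities over $\bZ$ in the coefficients of $f$ (\autoref{rmk:ResultantAbstract}), it suffices to verify them for the universal $f$, namely an indeterminate polynomial $F_i$ of degree $d_i$ over $B = k[T^*]$.
\item \emph{Applying the Veronese lemma.} By \autoref{lem:ResultantWedgeVeronese}, $F_i \wedge \fR^k_{\xi_J} \equiv \fR^k_{\zeta_i,\xi_{J\setminus\{i\}}}$, viewed as a resultant in $(T^*_i, X^*_{J\setminus\{i\}})$ over $k$. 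Reinterpreting this over $k(T^*_i)$ via \autoref{rmk:ResultantReduction} (the family $\zeta_i$ is unconstrained if $\xi_{J\setminus\{i\}}$ is unconstrained), it becomes a single resultant $\fR^{k(T^*)}_{\xi_{J\setminus\{i\}}}$-type object in fewer variables, or a power of $\fR^k_{\xi_{J\setminus\{i\}}}$ in the degenerate case. In either case induction applies.
\item \emph{Commutation.} This is \autoref{lem:ResultantWedgeCommute}.
\end{itemize}

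The main obstacle is making this induction honest. One must check that $F_i\wedge \fR$ over $k[T^*]$, hence over its fraction field, is a resultant over $B = k[T^*]$ and not just over $k(T^*)$; this follows because being a resultant is a family of polynomial identities and $k[T^*]\subseteq k(T^*)$ is injective. Then every $B$-algebra receives a map from the universal case.

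For the ``moreover'' part, given a minimally constrained $\xi_J$ with $J \subsetneq I$, choose for $i \in I\setminus J$ points $\xi_i$ generic over $k(\xi_J)$ and over each other. If each $n_i \geq 1$, then by \autoref{cor:Unconstrained} adding generic points of positive-dimensional spaces creates no new constraint. The generic linear forms $\lambda_i$ for $i \notin J$ are then independent of $\lambda_J$, so the full family is still singly constrained with the same unique relation $\fR^k_{\xi_J}$. The hypothesis $n_i \neq 0$ is needed because in $\bP^0$ the space $V^*(\xi_i)$ is $0$, which would impose the extra relation $X^*_i = 0$.
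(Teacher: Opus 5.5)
Your proposal is correct and follows the paper's own route. It uses the same cycle, and your (3)$\Rightarrow$(1) rests on \autoref{lem:ResultantWedgeVeronese}, \autoref{lem:ResultantWedgeCommute} and \autoref{rmk:ResultantReduction}, unfolded into an explicit induction on $|J|$; the paper does not spell out the ``moreover'' part, and your genericity argument (including why $n_i \neq 0$ is needed) supplies it correctly.

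One small slip needs correcting. $\zeta_i$ is unconstrained exactly when $\xi_i$ is not algebraic over $k$, not whenever $\xi_{J \setminus \{i\}}$ is unconstrained. In the remaining case $\fR^k_{\xi_J} \equiv \fR^k_{\xi_i}$, so $F_i \wedge \fR^k_{\xi_J} \in k[T^*_i]$ is constant in $X^*_{J \setminus \{i\}}$ and is trivially a resultant.
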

\begin{proof}
  \begin{cycprf}
  \item Observed in \autoref{rmk:ResultantAbstract}.
  \item By \autoref{lem:ResultantAbstract}.
  \item[\impfirst]
    By \autoref{lem:ResultantWedgeVeronese}, \autoref{lem:ResultantWedgeCommute} and \autoref{rmk:ResultantReduction}.
  \end{cycprf}
\end{proof}

\begin{cor}
  \label{cor:ResultantAbstract}
  Let $A$ be a reduced ring.
  Then every weak resultant over $A$ is a resultant.
\end{cor}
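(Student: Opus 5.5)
The plan is to reduce to the case of a field, where \autoref{thm:ResultantAbstract} applies, by using that being a (weak) resultant is defined by polynomial identities over $\bZ$ (\autoref{rmk:ResultantAbstract}). Let $A$ be reduced and $\fR \in A[X^*_I]$ a weak resultant. Since $A$ is reduced, the natural map $A \rightarrow \prod_{\fp} \Frac(A/\fp)$, over the prime ideals $\fp$ of $A$, is injective; call the target $B$. By functoriality, the image of $\fR$ in each $k_\fp[X^*_I]$, where $k_\fp = \Frac(A/\fp)$, is a weak resultant. By \autoref{thm:ResultantAbstract} it is therefore a resultant over $k_\fp$.

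The key step is to recover from this that $\fR$ itself is a resultant over $A$. I would first unwind \autoref{dfn:ResultantAbstract} as a family of polynomial identities in the coefficients of $\fR$. For a fixed $i$, splittability in $X^*_i$ is such a family by definition. The inductive clause quantifies over all $A$-algebras $B'$ and homogeneous $f \in B'[X_i]$. By functoriality of the wedge, however, it suffices to take $f = F$ the indeterminate polynomial of each degree $d$ over $B' = A[T^*]$, since every other case is a specialisation of this one. So the condition says that $F \wedge \fR \in A[T^*][X^*_{I\setminus\{i\}}]$ is a resultant, and recursively, by induction on $|I|$, this is again a family of polynomial identities in the coefficients. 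Likewise, the commutation clause reduces to the single identity $G \wedge F \wedge \fR = F \wedge G \wedge \fR$ for indeterminate $F,G$. Every coefficient of these wedge expressions is a polynomial over $\bZ$ in the coefficients of $\fR$. One subtlety: the wedge was defined through a choice of splitting morphism, but \autoref{dfn:Wedge} notes it is well defined, and the universal formula $F\wedge G_D \in \bZ[T^*,\sigma_D]$ evaluated at the coefficients gives an explicit polynomial expression once splittability is known.

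With this in hand, the argument runs as follows. Each required identity $P(\text{coefficients of }\fR) = 0$ holds after mapping to every $k_\fp$, hence in $B$, hence in $A$ by injectivity. The identities are nested, since the later ones only make sense once the earlier splittability identities hold. I would therefore verify them in the order given by the recursion: splittability first (which holds already, as $\fR$ is a weak resultant), then the wedge identities one level at a time.

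The main obstacle I expect is making precise that the infinitely many quantified conditions (all degrees $d$, all algebras $B'$) reduce to identities over $\bZ$ that are compatible with the injection $A \hookrightarrow B$. In particular, one must check that ``resultant over $A[T^*]$'' can be tested after the injective base change $A[T^*] \hookrightarrow B[T^*]$. Here I would use that $A[T^*]$ embeds into $\prod_\fp k_\fp[T^*]$ and that $k_\fp[T^*] \subseteq k_\fp(T^*)$ is a field, again covered by \autoref{thm:ResultantAbstract}. Running the induction on $|I|$ with the statement ``a weak resultant over a reduced ring is a resultant'' makes this step self-contained, since $A[T^*]$ is again reduced.
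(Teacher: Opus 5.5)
Your proposal is correct and follows the paper's own argument. The paper obtains this corollary from \autoref{thm:ResultantAbstract} together with the observation in \autoref{rmk:ResultantAbstract} that both notions are cut out by polynomial constraints over $\bZ$ on the coefficients, so it suffices to work over fields; your embedding of $A$ into the product of the fields $\Frac(A/P)$, $P$ prime, just makes this reduction explicit. Your final paragraph is unnecessary, since the polynomial-identity description in your second paragraph already covers the base change to $A[T^*]$; moreover, its induction would also need $F \wedge \fR$ to be a weak resultant over $A[T^*]$, which you would have to obtain by the same pull-back argument.
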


\begin{cor}
  \label{cor:ResultantWedgeIrreducible}
  Let $i \in I$ and $J = I \setminus \{i\}$.
  Let $A$ be a unique factorisation domain and $\fR \in A[X^*_I] \setminus A[X^*_J]$ an irreducible resultant (over $A$).
  Let $d \geq 1$ and let $F_i = \sum_{|\alpha| = d} T^*_{i,\alpha} X_i^\alpha$ be an indeterminate polynomial of degree $d$.
  Then $F_i \wedge_i \fR \in A[T^*_i,X^*_J]$ is irreducible.
\end{cor}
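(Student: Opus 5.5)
Let $K = \Frac(A)$ and $L \supseteq K$ be algebraically closed of infinite transcendence degree. The plan is to reduce to the field case, identify the irreducible factor with a resultant $\fR^K_{\xi_{I_0}}$, and apply \autoref{lem:ResultantWedgeVeronese}.

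Since $A$ is a unique factorisation domain, so is $A[T^*_i,X^*_J]$. By Gauss's lemma, an element of it is irreducible if and only if it is primitive (no non-unit of $A$ divides all its coefficients) and irreducible over $K$. So there are two things to check.

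\textbf{Irreducibility over $K$.} Since $\fR$ is irreducible and not in $A[X^*_J]$, it is primitive, hence irreducible in $K[X^*_I]$. It is also a resultant over $K$. By \autoref{thm:ResultantAbstract}, $\fR \equiv \fR^K_{\xi_{I_0}}$ for some minimally constrained $\xi_{I_0}$, where $I_0$ is the set of indices whose variables occur in $\fR$; by hypothesis $i \in I_0$.

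By minimality, $\xi_{I_0 \setminus \{i\}}$ is unconstrained. Then \autoref{lem:ResultantWedgeVeronese}, applied over $K$ with $\zeta_i = \xi_i^{\otimes d}$, shows that $\zeta_i,\xi_{I_0\setminus\{i\}}$ is singly constrained and
\[
F_i \wedge_i \fR \equiv \fR^K_{\zeta_i,\xi_{I_0\setminus\{i\}}}.
\]
The right-hand side is irreducible in $K[T^*_i,X^*_J]$ by definition. The wedge is taken over $A$ with $\varphi(Y)=1$, and is compatible with extending to $K$ by functoriality. Hence $F_i\wedge_i\fR$ equals an irreducible polynomial over $K$ up to a scalar in $K^\times$.

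\textbf{Primitivity (the main obstacle).} Write $D = \deg_{X^*_i}\fR$, and suppose a prime $p \in A$ divides every coefficient of $F_i \wedge_i \fR$. Pass to the domain $\bar A = A/(p)$, using that the wedge is functorial. There $\bar F_i \wedge \bar\fR = 0$ with $\bar F_i$ an indeterminate polynomial over $\bar A$. Over the algebraic closure of $\Frac(\bar A)(X^*_J)$, split $\bar\fR$ in $X^*_i$ as $y\prod_t (X^*_i\cdot x_t)$. The wedge becomes $y^d\prod_t F_i(x_t)$, and each $F_i(x_t)$ is a nonzero linear form in $T^*_i$, since $x_t \neq 0$. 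So the product is nonzero unless $y = 0$.

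But $y = 0$ means the splitting is degenerate, i.e. $\bar\fR = 0$. This contradicts primitivity of $\fR$, which holds because it is irreducible and non-constant.

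The delicate point is the case where the coefficient of the top-degree part of $\bar\fR$ in $X^*_i$ drops. This is handled by working with the splitting morphism $\varphi$ of \autoref{dfn:Wedge}, which is homogeneous of degree exactly $D$ and needs no leading coefficient. Concretely, I would use this identity: substituting $F_i = (X^*_i\cdot X_i)^d$ turns $F_i\wedge_i\fR$ into $\fR^d$. If $p$ divided all coefficients of $F_i\wedge_i\fR$, then after this specialisation $p$ would divide $\fR^d$, hence $\fR$, contradicting primitivity. This specialisation argument is cleaner, and it is the one I would use; it gives that $F_i\wedge_i\fR$ is primitive, hence irreducible in $A[T^*_i,X^*_J]$.
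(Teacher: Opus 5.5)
Your proposal is correct and follows the paper's own argument. You get irreducibility over $\Frac(A)$ by identifying $\fR$ with a resultant $\fR^{k}_{\xi_{I_0}}$ via \autoref{thm:ResultantAbstract} and applying \autoref{lem:ResultantWedgeVeronese}; you then get primitivity by specialising $F_i$ to $(X^*_i \cdot X_i)^d$, so that any constant divisor would have to divide $\fR^d$.

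The reduction-mod-$p$ detour is valid but unnecessary, and the worry about a dropping leading coefficient does not arise, since $\fR$ is homogeneous in $X^*_i$.
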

\begin{proof}
  Let $k = \Frac(A)$.
  Then $\fR$ is irreducible over $k$.
  By \autoref{thm:ResultantAbstract}, there exists a field extension $L \supseteq k$, a subset $I' \subseteq I$ and a minimally constrained family $\xi_{I'} \in \bP(L)^I$ such that $\fR = \fR^k_{\xi_{I'}}$.
  Since $\fR \notin A[X^*_J]$, we have $i \in I'$, and we may assume that $I' = I$.
  Let $\zeta = \xi_i^{\otimes d}$.
  Then $F_i \wedge_i \fR = \fR^k_{\zeta,\xi_J}$, by \autoref{lem:ResultantWedgeVeronese}.
  In particular, $F_i \wedge_i \fR$ is irreducible in $k[T^*,X^*_J]$.
  Assume that $a \in A$ and $a \mid F_i \wedge_i \fR$ in $A[T^*,X^*_J]$.
  Since $F_i$ is indeterminate, we may substitute $(X_i^* \cdot X_i)^d$ for it.
  Then $a \mid (X_i^* \cdot X_i)^d \wedge_i \fR = \fR^d$ in $A[X^*_I]$.
  Therefore, $a$ is a unit, completing the proof.
\end{proof}

\section{Chow forms}
\label{sec:ChowForm}

In this section we fix an ambient projective dimension $n$, and let $X$ denote indeterminate homogeneous coordinates in $\bP^n$.
We let $X^*$ be the dual indeterminates, and for $i \in \bN$ we let $X^*_i$ be a copy of $X^*$.
For $\ell \in \bN$, we let $X^*_{\leq \ell}$ denote the family $(X^*_0,\ldots,X^*_\ell)$.
In the notation of \autoref{sec:Resultant}, this would be $X^*_I$, with $I = \{0,\ldots,\ell\}$.
Sometimes we would also consider the case where $\ell = -1$, in which case $X^*_{\leq \ell}$ is empty.

\begin{dfn}
  \label{dfn:ChowFormAlgebraicSet}
  Let $K$ be an algebraically closed field, and $\ell \geq -1$.
  Let $W \subseteq \bP^n(K)$ an algebraic set (Zariski closed, not necessarily irreducible) defined over $K$ and $\fC \in K[X^*_{\leq \ell}]$.
  We say that $\fC$ is a \emph{Chow form for $W$} in dimension $\ell$ if for every family $\lambda_{\leq \ell}$ of linear forms with coefficients in $K$:
  \begin{gather}
    \label{eq:ChowForm}
    \fC(\lambda_{\leq \ell}) = 0 \qquad \Longleftrightarrow \qquad W \cap V(\lambda_{\leq \ell}) \neq \emptyset.
  \end{gather}
  If $\fC$ is irreducible, then it is determined by \autoref{eq:ChowForm} up to a factor in $K^\times$.
  We then say that it is the \emph{Chow form of $W$}, and denote it by $\fC_W$.
\end{dfn}

There are several borderline cases that deserve special notice:
\begin{enumerate}
\item If $\dim W > \ell$, then $\fC$ is a Chow form in dimension $\ell$ for $W$ if and only if $\fC = 0$.
\item If $W = \emptyset$, then $\fC$ is a Chow form in dimension $\ell$ for $W$ if and only if $\fC \in K^\times$.
\item In dimension $\ell = -1$, this means that $\fC$ is a Chow form for $W \neq \emptyset$ if and only if $\fC = 0$, and for $\emptyset$ if and only if $\fC \in K^\times$.
\end{enumerate}

\begin{prp}
  \label{prp:ChowFormAlgebraicSet}
  Let $W \subseteq \bP^n(K)$ be an algebraic set, and $\fC \in K[X^*_{\leq \ell}]$ a Chow form for $W$ in dimension $\ell \geq 0$.
  \begin{enumerate}
  \item
    \label{item:ChowFormAlgebraicSetSubstitution}
    For any family of linear forms $\lambda_{<\ell}$,
    \begin{itemize}
    \item either $W \cap V(\lambda_{<\ell})$ is infinite and $\fC(\lambda_{<\ell},X^*) = 0$,
    \item or $W \cap V(\lambda_{<\ell})$ can be enumerated, possibly with repetitions, as $\bigl\{ [x_t] : t < d \bigr\}$, and $\fC(\lambda_{<\ell},X^*) = a \prod_{t<d} \, (X^* \cdot x_t)$ for some $a \in K^\times$.
      In particular, $d = \deg_{X^*_\ell} \fC$.
    \end{itemize}
    The second case holds whenever $\fC \neq 0$ and $\lambda_{<\ell}$ is free over the coefficients of $\fC$.
  \item
    \label{item:ChowFormAlgebraicSetWedge}
    As a polynomial in $X^*_\ell$, $\fC$ is splittable, and for any homogeneous polynomial $f \in K[X]_D$, the polynomial $f \wedge \fC \in K[X^*_{<\ell}]$ is a Chow form in dimension $\ell-1$ for $W \cap V(f)$.
  \end{enumerate}
\end{prp}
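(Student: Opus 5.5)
Fix $\lambda_{<\ell}$ and put $g(X^*) = \fC(\lambda_{<\ell},X^*) \in K[X^*]$ and $W' = W \cap V(\lambda_{<\ell})$, an algebraic set. By \autoref{eq:ChowForm}, for every linear form $\mu$ we have $g(\mu) = 0$ if and only if $W' \cap V(\mu) \neq \emptyset$, so $g$ is a Chow form for $W'$ in dimension $0$. The first item thus reduces to the case $\ell = 0$.

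If $W'$ is infinite, then $\dim W' \geq 1$, so every hyperplane meets it, $g$ vanishes at every $\mu \in K^{n+1}$, and $g = 0$ because $K$ is infinite. If $W' = \emptyset$, then $g$ has no zeros, so it is a nonzero constant, which is the case $d = 0$. If $W' = \{[x_t]\}$ is finite and nonempty, the zero set of $g$ in $K^{n+1}$ is the finite union of the hyperplanes $X^* \cdot x_t = 0$. Each $X^*\cdot x_t$ is irreducible, so by the Nullstellensatz $g$ is nonzero, is divisible by each $X^*\cdot x_t$, and has no other irreducible factors. Hence $g = a\prod_t (X^*\cdot x_t)^{e_t}$ with $e_t \geq 1$; listing each point $e_t$ times gives the claimed enumeration with repetitions. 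Here $d$ is the degree of $g$ in $X^*$. It equals $\deg_{X^*_\ell}\fC$ whenever the leading part of $\fC$ in $X^*_\ell$ does not vanish at $\lambda_{<\ell}$.

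The main obstacle is the last claim: if $\fC \neq 0$ and $\lambda_{<\ell}$ is free over the field $k_0$ generated by the coefficients of $\fC$, then $g \neq 0$ and $\deg g = \deg_{X^*_\ell}\fC$. I would argue as follows. The $X^*_\ell$-homogeneous components of $\fC$ are polynomials over $k_0$ in $X^*_{<\ell}$, and they are not all zero. Since $\lambda_{<\ell}$ is algebraically independent over $k_0$, none of the nonzero components vanishes at $\lambda_{<\ell}$. So $g \neq 0$, and no degree drops. The second case then applies, because $g \neq 0$ excludes the first. One point needs care: $\fC$ need not be homogeneous in $X^*_\ell$. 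I would handle this by noting that the components of $g$ still vanish exactly on the union of the hyperplanes above, which forces $g$ to be homogeneous. Alternatively, one can observe that the zero-set condition depends only on radicals and pass to the top-degree component.

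For the second item, splittability is a polynomial identity on the coefficients in $X^*_\ell$ over $K[X^*_{<\ell}]$. By the first item it holds after specialising at any generic $\lambda_{<\ell}$, hence over $K(X^*_{<\ell})$, hence in $K[X^*_{<\ell}]$. For instance, compare $\fC$ with $a(X^*_{<\ell})\prod(X^*_\ell\cdot x_t)$ over the algebraic closure of $K(X^*_{<\ell})$, and use the morphism characterisation after \autoref{dfn:Splittable}. If $\fC = 0$, then $f\wedge\fC = 0$ and $\dim W > \ell$, so $\dim (W\cap V(f)) > \ell-1$ and $0$ is a valid Chow form. Otherwise, the wedge commutes with specialisation of $X^*_{<\ell}$ to $\lambda_{<\ell}$ by functoriality, since $\fC(\lambda_{<\ell},X^*)$ is splittable there. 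Thus $(f\wedge\fC)(\lambda_{<\ell}) = f\wedge g$. In the finite case this equals $a^{D}\prod_t f(x_t)$, which vanishes if and only if $f$ vanishes at some point of $W'$, that is, if and only if $W \cap V(f) \cap V(\lambda_{<\ell}) \neq \emptyset$. In the infinite case $g = 0$, so $f\wedge g = 0$, and indeed $V(f)$ meets the positive-dimensional $W'$. This establishes \autoref{eq:ChowForm} for $W \cap V(f)$ in dimension $\ell-1$. The edge cases $D = 0$ and $\ell = 0$ are handled by the conventions $f\wedge g = f^{\deg g}$ and $0^0 = 1$.
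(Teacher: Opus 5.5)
Your route is the paper's: $g = \fC(\lambda_{<\ell},X^*)$ is a Chow form in dimension $0$ for $W' = W\cap V(\lambda_{<\ell})$. Its irreducible factors are the $X^*\cdot x$ for $[x]\in W'$, and item 2 follows by specialising $f\wedge\fC$ at $\lambda_{<\ell}$ via functoriality.

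There is, however, a gap at exactly the point you flagged. You claim that $\fC$ ``need not be homogeneous in $X^*_\ell$'' and never establish the contrary, yet homogeneity is what the statement needs. Without it you only obtain $d = \deg_{X^*_\ell}\fC$ at those $\lambda_{<\ell}$ where the top $X^*_\ell$-component of $\fC$ does not vanish. The statement asserts this equality for every $\lambda_{<\ell}$ in the second case. Item 2 is worse off: it is not even meaningful as written. Splittability in \autoref{dfn:Splittable}, and hence $f\wedge\fC$, is only defined for an element of $K[X^*_{<\ell}][X^*_\ell]_D$. Your evaluation $(f\wedge\fC)(\lambda_{<\ell}) = a^{\deg f}\prod_{t<d} f(x_t)$ also silently uses $d = D$. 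Passing to the top-degree component does not help, since the statement concerns $\fC$ itself.

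The repair uses only what you have already proved.
\begin{itemize}
\item For every $\lambda_{<\ell}$ with coefficients in $K$, $\fC(\lambda_{<\ell},X^*)$ is either $0$ or $a\prod_t(X^*\cdot x_t)$, hence homogeneous in $X^*$.
\item Suppose $\fC$ had nonzero $X^*_\ell$-components of two distinct degrees $e\neq e'$. Choose in each a monomial in $X^*_\ell$ with nonzero coefficient $c_e$, resp.\ $c_{e'}$, in $K[X^*_{<\ell}]$. Then $c_e(\lambda_{<\ell})\,c_{e'}(\lambda_{<\ell}) = 0$ for all $\lambda_{<\ell}$, so $c_e c_{e'} = 0$ because $K$ is infinite. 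This is a contradiction.
\item Hence $\fC$ is homogeneous of some degree $D$ in $X^*_\ell$, and $g\neq 0$ in the second case forces $d = D$.
\end{itemize}

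A related point concerns your transfer of splittability. It goes through a $\lambda_{<\ell}$ that is free over the coefficients of $\fC$, but $K$ is only assumed algebraically closed and need not contain such a point. Argue instead, as the paper's ``splittable by item 1'' does: each $\bZ$-relation of $\sigma_D$, evaluated at the $X^*_\ell$-coefficients of $\fC$, is an element of $K[X^*_{<\ell}]$ vanishing at every $K$-point, hence zero.

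With these two adjustments the rest of your argument is correct and more detailed than the paper's sketch. This covers the Nullstellensatz factorisation, the genericity argument for the last sentence of item 1, and the case analysis for $f\wedge\fC$.
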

\begin{proof}
  For \autoref{item:ChowFormAlgebraicSetSubstitution}, observe first that by definition, $\fC(\lambda_{<\ell},X^*) \in K[X^*]$ is a Chow form in dimension zero for $W_0 = W \cap V(\lambda_{<\ell})$.
  If $W_0$ is infinite, then $\dim W_0 > 0$ and $\fC(\lambda_{<\ell},X^*) = 0$.
  If $W_0$ is finite, then $\fC(\lambda_{\leq \ell}) = 0$ if and only if $\lambda_\ell$ vanishes at some point of $W_0$.
  It follows that the irreducible factors of $\fC(\lambda_{<\ell},X^*)$ are exactly $X^* \cdot x$ for $[x] \in W_0$.

  For \autoref{item:ChowFormAlgebraicSetWedge}, consider two cases.
  If $\fC = 0$, then it is splittable, $\dim W > \ell$, and $\dim W \cap V(f) > \ell-1$.
  Therefore $f \wedge \fC = 0$ is indeed a Chow form for $W \cap V(f)$.
  If $\fC \neq 0$, then it is splittable in $X^*_\ell$ by \autoref{item:ChowFormAlgebraicSetSubstitution}.
  Let $\fD = f \wedge \fC \in K[X^*_{<\ell}]$.
  Let $\lambda_{<\ell}$ be any linear forms, and let $W_0$ and $\fC(\lambda_{<\ell},X^*) = a \prod_{t<d} \, (X^* \cdot x_t)$ be as in \autoref{item:ChowFormAlgebraicSetSubstitution}.
  Then
  \begin{gather*}
    \fD(\lambda_{<\ell}) = f \wedge \fC(\lambda_{<\ell},X^*) = a^{\deg f} \prod_{t<d} f(x_t).
  \end{gather*}
  In particular, $W \cap V(f) \cap V(\lambda_{<\ell}) \neq \emptyset$ if and only if $f$ vanishes at some point of $W_0$, if and only if $\fD(\lambda_{<\ell}) = 0$.
\end{proof}

\begin{lem}
  \label{lem:ChowFormAlternating}
  Let $\xi_I \in \bP^n(K)^I$ be singly constrained over $k$ and $\fC = \fR^k_{\xi_I}$.
  Then the following are equivalent:
  \begin{enumerate}
  \item There exists a point $\xi \in \bP^n(K)$ such that $\xi_i = \xi$ for all $i$ and $\fC$ is a Chow form for $W = \loc_k \xi$.
  \item The polynomial $\fC$ is a Chow form for some algebraic set $W \subseteq \bP^n(K)$.
  \item The polynomial $\fC$ is alternating.
  \end{enumerate}

  In particular, every irreducible variety admits a Chow form.
\end{lem}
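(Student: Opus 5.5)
The plan is to prove the cycle (1)$\Rightarrow$(2)$\Rightarrow$(3)$\Rightarrow$(1), and then deduce the final assertion. Throughout, $\lambda_I \in V^*(\xi_I)$ is generic over $k(\xi_I)$, so $\lambda_I$ is a generic root of the irreducible polynomial $\fC$ over $k$.

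\textbf{(1)$\Rightarrow$(2)} is trivial.

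\textbf{(2)$\Rightarrow$(3).} Fix $i \neq j$ and a new indeterminate $Z$. Write $\lambda'_I$ for $\lambda_I$ with $\lambda_i$ replaced by $\lambda_i + z\lambda_j$, where $z$ is transcendental over $k(\xi_I,\lambda_I)$.
\begin{itemize}
\item Since $V(\lambda'_I) = V(\lambda_I)$, the Chow form property \autoref{eq:ChowForm} gives $\fC(\lambda'_I) = 0$.
\item Hence the polynomial $\fC(X^*_i + ZX^*_j, X^*_{I\setminus\{i\}})$ vanishes at a generic root of $\fC$, with $Z$ free. So it is divisible by $\fC$ in $k[Z][X^*_I]$.
\item Both polynomials have the same degree in $X^*_I$, so $\fC(X^*_i+ZX^*_j,\ldots) = c(Z)\fC$ for some $c(Z) \in k[Z]$.
\item Substituting $Z \mapsto -Z$ into the transformed argument yields $c(Z)c(-Z) = 1$. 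So $c$ is constant, and $Z = 0$ gives $c = 1$.
\end{itemize}
This is exactly \autoref{eq:Alternating}.

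\textbf{(3)$\Rightarrow$(1).} I expect this to be the main step.

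First, every $X^*_i$ occurs in $\fC$. If $X^*_i$ occurs, then by \autoref{eq:AlternatingExchange} so does every $X^*_j$. We may assume $\fC$ is non-constant, since it is an irreducible relation. Hence $\xi_{I\setminus\{i\}}$ is unconstrained for every $i$, by \autoref{cor:SinglyConstrainedOne}.

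Next, fix $i \neq j$ and $J = I\setminus\{i\}$. By \autoref{thm:ResultantSplitting}, $\fC(X^*_i,\lambda_J)$ splits as a product of linear factors $X^*_i\cdot x_t$ attached to the finitely many conjugates of $\xi_i$ over $k(\lambda_J)$. Among these, $\xi_i$ is the unique one at which $\lambda_i$ vanishes (\autoref{lem:ResultantAbstractSplittable}).

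By alternation, $\fC(\lambda'_I) = \fC(\lambda_I) = 0$, where $\lambda'_I$ is as above. Now $\lambda_i + z\lambda_j$ is a root of the same polynomial $\fC(X^*_i,\lambda_J)$, so it vanishes at some conjugate $x_t$. Since $z$ is transcendental over $k(\lambda_I,x_t)$, both $\lambda_i(x_t) = 0$ and $\lambda_j(x_t) = 0$. The first equation forces $[x_t] = \xi_i$, so $\lambda_j$ vanishes at $\xi_i$.

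Since $\lambda_j$ is generic in $V^*(\xi_j)$ over $k(\xi_I)$, this forces $\xi_i = \xi_j$. So all members of the family equal a single point $\xi$.

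Finally, let $W = \loc_k\xi$. By \autoref{lem:ResultantZeros}, $\fC(\mu_I) = 0$ if and only if some specialisation $\zeta$ of $\xi$ over $k$ has $\mu_i(\zeta) = 0$ for all $i$, that is, if and only if $W \cap V(\mu_I) \neq \emptyset$. So $\fC$ is a Chow form for $W$.

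\textbf{The final assertion.} Let $W$ be irreducible of dimension $d$ over $k$, with generic point $\xi$, and take $|I| = d+1$ with $\xi_i = \xi$ for all $i$.
\begin{itemize}
\item By \autoref{cor:Unconstrained}, every proper subfamily is unconstrained, since $\dim_k\xi = d \geq |J|$ for $|J| \leq d$.
\item The whole family is constrained, since $d < |I|$.
\item Hence it is singly constrained by \autoref{cor:SinglyConstrainedOne}.
\end{itemize}
The argument of the last paragraph of (3)$\Rightarrow$(1), via \autoref{lem:ResultantZeros}, then shows that $\fR^k_{\xi_I}$ is a Chow form for $W$.

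The delicate points are the divisibility argument in (2)$\Rightarrow$(3), which is carried out over $k$ using generic roots rather than over $K$, and the uniqueness of the conjugate at which $\lambda_i$ vanishes in (3)$\Rightarrow$(1).
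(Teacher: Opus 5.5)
Your proposal is correct and follows the paper's route: (2)$\Rightarrow$(3) comes from $V(\mu_i,\mu_J)=V(\mu_i+a\mu_j,\mu_J)$, and (3)$\Rightarrow$(1) splits $\fC(X^*_i,\lambda_J)$ via \autoref{thm:ResultantSplitting}, uses alternation to force $\xi_i=\xi_j$, and finishes with the specialisation argument. The divisibility step with $c(Z)c(-Z)=1$, the transcendental $z$ in place of $a=1$, and the explicit check of the final assertion via \autoref{cor:Unconstrained} and \autoref{cor:SinglyConstrainedOne} spell out details the paper leaves terse or implicit.
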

\begin{proof}
  \begin{cycprf}
  \item
    Immediate.
  \item
    Let $\mu_I$ be linear forms.
    Let $i \in I$ and $j \in J = I \setminus \{i\}$.
    Then $V(\mu_i,\mu_J) = V(\mu_i+a\mu_j,\mu_J)$ for every $a \in K$.
    By \autoref{dfn:ChowFormAlgebraicSet}, $\fC(\lambda_i + a \lambda _j,\lambda_J) = 0$ if and only if $\fC(\lambda_i,\lambda_J) = 0$.
    Therefore $\fC(X^*_i + Y X^*_j, X^*_J)$ cannot depend on $Y$, and must be equal to $\fC(X^*_i, X^*_J)$
  \item[\impfirst]
    Let $\lambda_I$ be a generic in $V^*(\xi_I)$, so $\fC(\lambda_I) = 0$ is the unique algebraic relation over $k$ satisfied by $\lambda_I$.
    Let $i \in I$ and $j \in J = I \setminus \{i\}$.
    Since $\fC$ is alternating and not a constant, $X^*_i$ occurs in $\fC$, so $\lambda_J$ is free over $k$.
    By \autoref{thm:ResultantSplitting}, $\fC(X^*_i,\lambda_J)$ splits as $\prod_{t < D} \, (X^*_i \cdot x_{i,t})$, where $\xi_i = [x_{i,0}]$.
    By alternation,
    \begin{gather*}
      \prod_{t < D} \, (\lambda_i + \lambda_j)(x_{i,t}) = \fC(\lambda_i + \lambda_j,\lambda_J) = \fC(\lambda_i,\lambda_J) = 0.
    \end{gather*}
    Since $\lambda_i$ is generic in $V^*(\xi_i)$, and $\lambda_j$ is generic in $V^*(\xi_j)$, this is only possible if $\xi_i = \xi_j$.
    Therefore, the sequence $\xi_I$ is constant, equal to some $\xi \in \bP^n(K)$.
    Let $W = \loc_k \xi$.

    Let us consider a family $\mu_I \in \bigl( K[X]_1 \bigr)^I$.
    If $\fC(\mu_I) = 0$, then $\mu_I$ specialises $\lambda_I$ over $k$.
    This can be extended to a specialisation $\mu_I,\zeta$ of $\lambda_I,\xi$, so $\zeta \in W \cap V(\mu_I)$.
    Conversely, if $\zeta \in W \cap V(\mu_I)$, then $\zeta$ specialises $\xi$.
    Since $\lambda_I \in V^*(\xi)^I$ is generic, the family $\mu_I,\zeta$ is a specialisation of $\lambda_I,\xi$, and in particular, $\fC(\mu_I) = 0$.
    Thus $\fC$ is a Chow form for $W$.
  \end{cycprf}
\end{proof}

\begin{dfn}
  \label{dfn:ChowFormAbstract}
  Let $A$ be a ring, and $\ell \geq -1$.
  A \emph{Chow form} in dimension $\ell$ is an alternating resultant $\fC \in A[X^*_{\leq \ell}]$.

  If $\ell \geq 0$, then $\fC$ is a Chow form of \emph{degree} $d$ if it is of degree $d$ as a polynomial in $X^*_0$.
  If $\fC = 0$, or if $\ell = -1$, then we do not define the degree.
\end{dfn}

\begin{thm}
  \label{thm:ChowFormAbstract}
  Let $K$ be an algebraically closed field, and $\fC \in K[X^*_{\leq \ell}] \setminus \{0\}$ for $\ell \geq 0$.
  Then $\fC$ is Chow form if and only if it is a Chow form for some algebraic set $W \subseteq \bP^n(K)$.
  Moreover, the irreducible factors of $\fC$ are exactly the Chow forms $\fC_U$, where $U$ varies over the irreducible components of $W$.
  In particular, the set $W$ is determined by $\fC$, is defined over $K$, and is of pure dimension $\ell$.
\end{thm}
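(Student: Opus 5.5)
We prove both directions through the irreducible factors, reducing everything to \autoref{thm:ResultantAbstract} and \autoref{lem:ChowFormAlternating}.

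\emph{Forward direction.} Assume $\fC \neq 0$ is a Chow form, that is, an alternating resultant. By \autoref{thm:ResultantAbstract} (with $k = K$ and $L \supseteq K$ algebraically closed of infinite transcendence degree), every irreducible factor of $\fC$ has the form $\fR^K_{\xi_J}$ with $J \subseteq \{0,\ldots,\ell\}$ and $\xi_J$ minimally constrained. By \autoref{lem:AlternatingFactor}, each such factor is again alternating.

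\begin{itemize}
\item A non-constant alternating factor must involve every $X^*_i$. If $X^*_i$ occurred but $X^*_j$ did not, then $g(X^*_i + Z X^*_j,\ldots)$ would depend on $Z$. Hence $J = \{0,\ldots,\ell\}$.
\item \autoref{lem:ChowFormAlternating} then shows that the factor is a Chow form for $U = \loc_K \xi$, where $\xi \in \bP^n(L)$ is the common value of the family.
\item Since $\xi_{\leq \ell}$ is singly constrained, \autoref{cor:Unconstrained} and \autoref{cor:SinglyConstrained} give $\dim_K \xi = \ell$. Indeed, $\xi_{<\ell}$ is unconstrained, which forces $\dim_K \xi \geq \ell$, while the full family is constrained, which forces $\dim_K \xi < \ell+1$.
\item Because $K$ is algebraically closed, $U$ is an irreducible variety of dimension $\ell$ defined over $K$, and the factor is $\fC_U$.
\end{itemize}

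Write $\fC = a \prod \fC_{U_t}^{e_t}$ and set $W = \bigcup U_t$, taking the $U_t$ distinct. For any $\lambda_{\leq\ell}$ over $K$, $\fC(\lambda) = 0$ iff some $\fC_{U_t}(\lambda) = 0$, iff $W \cap V(\lambda) \neq \emptyset$. So $\fC$ is a Chow form for $W$.

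\emph{Converse.} Suppose $\fC \neq 0$ is a Chow form for an algebraic set $W$ in the sense of \autoref{dfn:ChowFormAlgebraicSet}.

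\begin{itemize}
\item Alternation follows as in the proof of \autoref{lem:ChowFormAlternating}. The zero set of $\fC(X^*_i + Y X^*_j, X^*_J)$ over $K$ does not depend on $Y$. We must upgrade this to polynomial identity. Every irreducible factor $P$ of $\fC(X^*_i + Y X^*_j,\ldots)$ divides, by the Nullstellensatz, a power of $\fC(X^*_i,\ldots)$, and conversely. So both sides have the same irreducible factors, and counting degrees via the triangular change of variables shows they agree up to a scalar. Setting $Y = 0$ shows the scalar is $1$.
\item To see that $\fC$ is a resultant, use the decomposition $W = \bigcup U_t$ into irreducible components. Since $\fC \neq 0$, we have $\dim W \leq \ell$. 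Components of dimension $< \ell$ contribute nothing, because a generic $V(\lambda_{\leq\ell})$ misses them; in fact $W$ turns out to have pure dimension $\ell$.
\item Each $\fC_{U_t}$ with $\dim U_t = \ell$ vanishes on a hypersurface contained in $Z(\fC)$, so $\fC_{U_t}$ divides $\fC$. Conversely, each irreducible factor $P$ of $\fC$ has zero set contained in $\bigcup_t Z(\fC_{U_t})$, so $P$ is one of the $\fC_{U_t}$.
\item For the low-dimensional components, $Z(\fC)$ has pure codimension $1$, while the set of $\lambda$ meeting a component of dimension $< \ell$ has codimension $\geq 2$. Such a component therefore lies in $Z(\fC)$ only if it is contained in an $\ell$-dimensional component, hence it is not a component. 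This gives pure dimension $\ell$.
\item Now $\fC = a\prod \fC_{U_t}^{e_t}$, with each factor an $\fR^K_{\xi,\ldots,\xi}$ for $\xi$ generic in $U_t$. The implication (3)$\Rightarrow$(1) of \autoref{thm:ResultantAbstract} shows that $\fC$ is a resultant, and it is alternating, so it is a Chow form.
\end{itemize}

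\emph{Moreover part.} The irreducible factors are exactly the $\fC_U$ for the components $U$ of $W$. Since $\fC_U$ determines $U$ (for instance via \autoref{lem:ResultantZeros}, or by recovering $U$ through \autoref{prp:ChowFormAlgebraicSet}), $W$ is determined by $\fC$.

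The main obstacle is the converse: the codimension estimate showing that lower-dimensional components are absent, and the passage from vanishing sets to divisibility. The forward direction is mostly bookkeeping on top of the earlier results.
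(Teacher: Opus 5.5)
Your forward direction is the paper's argument: factor, apply \autoref{thm:ResultantAbstract}, force $J = I$ by \autoref{lem:AlternatingFactor}, and conclude with \autoref{lem:ChowFormAlternating}. You also make the dimension count via \autoref{cor:Unconstrained} explicit, which the paper leaves implicit.

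Your converse takes a genuinely different route. The paper observes that, by \autoref{prp:ChowFormAlgebraicSet}, a Chow form for $W$ is a weak resultant. It then invokes the hard implication weak resultant $\Rightarrow$ resultant of \autoref{thm:ResultantAbstract}, which rests on \autoref{lem:ResultantAbstract} and \autoref{lem:ResultantAbstractAlgebraic}, and gets alternation from \autoref{lem:ChowFormAlternating}. The description of the factors is then inherited from the forward direction, and the identification of the given $W$ with the reconstructed $\bigcup U_t$ is left implicit.

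You instead identify the irreducible factors of $\fC$ directly with the $\fC_U$ for the $\ell$-dimensional components $U$ of the given $W$. You use the Nullstellensatz, plus the fact that the incidence locus of a lower-dimensional component has codimension $\geq 2$. This way only the easy implication (3)$\Rightarrow$(1) of \autoref{thm:ResultantAbstract} is needed. It bypasses the transcendence-degree argument of \autoref{lem:ResultantAbstractAlgebraic}, and it proves the structural claims for the given $W$ itself. The price is importing classical dimension theory, which the paper's generic-point framework avoids. Your Nullstellensatz upgrade of alternation is also more careful than the paper's ``cannot depend on $Y$''.

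One step is asserted rather than proved. In the pure-dimensionality bullet, the codimension count only tells you that the incidence locus of a low-dimensional component $U_0$ sits inside the hypersurface $Z(\fC)$, which you already knew. It does not yield $U_0 \subseteq U_t$. What is needed is a projection argument:
\begin{itemize}
\item Take $x \in U_0$ outside the union $U'$ of the $\ell$-dimensional components.
\item Projecting from $x$, the image of $U'$ in $\bP^{n-1}$ has dimension at most $\ell$. So $\ell+1$ general hyperplanes through $x$ cut out a linear space missing $U'$.
\item The corresponding $\lambda_{\leq\ell}$ satisfies $\fC(\lambda_{\leq\ell}) = 0$, since $x \in W \cap V(\lambda_{\leq\ell})$.
\item But every $\fC_{U_t}(\lambda_{\leq\ell})$ with $\dim U_t = \ell$ is non-zero, contradicting $\fC = a\prod \fC_{U_t}^{e_t}$.
\end{itemize}
The same argument is what shows that $\fC_U$ determines $U$; \autoref{lem:ResultantZeros} alone does not. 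Alternatively, \autoref{thm:ResultantSplitting} recovers a generic point of $U$ among the linear factors of $\fC_U(\lambda_{<\ell},X^*_\ell)$ for $\lambda_{<\ell}$ free over $K$.

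Note also that pure dimensionality is not needed for the main equivalence. The codimension bound already places each $Z(P)$, for $P$ an irreducible factor, inside some $Z(\fC_{U_t})$ with $\dim U_t = \ell$.
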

\begin{proof}
  Assume first that $\fC$ is a Chow form, and let $\fC = \prod_t \fC_t$ be its factorisation in $K[X^*_{\leq \ell}]$.
  By \autoref{thm:ResultantAbstract}, each $\fC_t$ is of the form $\fR^k_{\xi_J}$ where $J \subseteq I$ and $\xi_J$ is minimally constrained.
  By \autoref{lem:AlternatingFactor}, $\fC_t$ is alternating.
  Therefore $J = I$, and by \autoref{lem:ChowFormAlternating}, $\fC_t = \fC_{U_t}$ for some $U_t \subseteq \bP^n(K)$, irreducible of dimension $\ell$.
  It follows that $\fC$ is a Chow form for $W = \bigcup U_t$.

  Conversely, assume that that $\fC$ is a Chow form for $W$.
  By \autoref{prp:ChowFormAlgebraicSet}, $\fC$ is a weak resultant, and by \autoref{thm:ResultantAbstract}, it is a resultant.
  By \autoref{lem:ChowFormAlternating}, it is alternating.
\end{proof}

\begin{exm}
  \label{exm:ChowForm}
  Let us consider a few obvious cases.
  Assume that $\fC \in K[X^*_{\leq \ell}] \setminus \{0\}$.
  \begin{itemize}
  \item
    If $\ell > n$, then $\fC$ is a Chow form if and only if it belongs to $K^\times$.
    Indeed, the only alternating polynomials in $X^*_{\leq \ell}$, and \textit{a fortiori} the only Chow forms in dimension $\ell$, are the constants.
  \item
    If $\ell = n$, then $\fC$ is a Chow form if and only if $\fC = a \det(X^*_{\leq n})^D$ for some $a \in K^\times$ and $D \in \bN$.
    Indeed, the determinant is the Chow form of $\bP^n$, and it is the only irreducible alternating polynomial in $X^*_{\leq n}$.
  \item
    If $\ell = n-1$, then $\fC$ is a Chow form if and only if it is of the form $g(X^*_0 \wedge \cdots \wedge X^*_{n-1})$, where $g \in K[X] \setminus \{\emptyset\}$ is homogeneous.
    Indeed, every alternating polynomial is of this form, and conversely, $g(X^*_0 \wedge \cdots \wedge X^*_{n-1}) = \pm g \wedge \det$ is the Chow form of $V(g)$.
  \item
    If $\ell = 0$, then $\fC \in k[X^*]$ (identifying $X^*$ with $X^*_0$) is a Chow form if and only if it is splittable.
    Thus, $\fC$ is a Chow form for an algebraic set $W$ if and only if $\fC$ splits as $a \prod_{j < d} x_j$, in the sense of \autoref{conv:PointsAsPolynomials}, and $W = \bigl\{ [x_j] : j < d \bigr\}$.

    Indeed, the zeros of $\fC$ are exactly those $\lambda$ that vanish on some point of $W$.
  \item
    If $\ell = -1$, then $\fC \in K^\times$, and conversely, every constant in $K^\times$ is indeed a Chow form for the empty set.
  \end{itemize}
  The first three cases are covered by the discussion following \autoref{lem:AlternatingDeterminant}.
\end{exm}

\begin{exm}
  \label{exm:NonChowForm}
  We have seen that for $\ell \geq n-1$, every homogeneous alternating polynomial is a Chow form, and for $\ell \leq 0$, every splittable polynomial is one.
  However, in general, an alternating splittable polynomial need not be a Chow form.

  Indeed, consider the minimal setting which does not fit in any of the easy cases, namely $n = 3$ and $\ell = 1$.
  In this case, $X^*$ consists of four indeterminates.
  Let $g(X^*_0,X^*_1)$ be any non-degenerate alternating bilinear form on four-dimensional space.
  Then it is an alternating, homogeneous (of degree one) splittable (since it is already linear) polynomial.
  However, it is not a Chow form (or else it would be the Chow form of a degree one curve, i.e., of a line, but those are degenerate bilinear forms).
\end{exm}

\begin{rmk}
  Let $\fC$ be a Chow form for $W$, and let $\fC^\sharp(\bY^*,Y)$ be as per \autoref{dfn:AlternatingSharp}.
  Let $\xi = [x] \in \bP^n(K)$.
  It is then easy to check that $\xi \in W$ if and only if $\fC^\sharp(\bY^*,\bY^* \cdot x) = 0$.

  Consider the polynomial $\fC^\sharp(\bY^*,\bY^* \cdot X)$.
  Viewed as a polynomial in the indeterminates $\bY^*$, its coefficients are polynomials in $X$, homogeneous of degree $d$, and together they define $W$.
\end{rmk}

\begin{ntn}
  \label{ntn:ChowFormWedge}
  Let $\fC \in A[X^*_{\leq \ell}]$ be a Chow form.
  If $\ell \geq 0$, then for a homogeneous polynomial $f \in A[X]$ we shall denote $f \wedge_\ell \fC \in A[X^*_{\leq\ell-1}]$ by $f \wedge \fC$ (see \autoref{ntn:MultiWedge}, and compare also with \autoref{prp:ChowFormAlgebraicSet}\autoref{item:ChowFormAlgebraicSetWedge}).

  Since $f \wedge \fC$ is again a Chow for, we may iterate the operation.
  Thus, if $k \leq \ell + 1$ and $F = (f_i : i < k)$ is a sequence of homogeneous polynomials (of varying degrees), we let
  \begin{gather*}
    F \wedge \fC = f_0 \wedge \bigl(f_1 \wedge \ldots (f_{k-1} \wedge \fC) \ldots \bigr).
  \end{gather*}
  When $\fC_{\bP^n}$ is the normalised Chow form of $\bP^n$, namely the determinant form, we allow ourselves to omit it, writing
  \begin{gather*}
    F^\wedge = f_0 \wedge \cdots \wedge f_{k-1} = F \wedge \fC_{\bP^n}.
  \end{gather*}
  This coincides with the Macaulay resultant of the family $F$, see \cite{Macaulay:Elimination}.
\end{ntn}

If $\deg f_i = 0$, then $F \wedge \fC = f_i^D$, where $D = \deg \fC \prod_{j \neq i} \deg f_j$.
In particular, if $\deg f_i = \deg f_j = 0$ for $i \neq j$ then $F \wedge \fC = 1$.

Let $\fC \in A[X^*_{\leq \ell}]$ be a Chow form of degree $D$ in dimension $\ell \geq 0$.
\begin{enumerate}
\item
  \label{item:ChowFormWedgeIrreducible}
  Assume that $\fC$ is irreducible over a unique factorisation domain $A$.
  Let $f = \sum T^*_\alpha X^\alpha$ be indeterminate polynomial of degree $d \geq 1$.
  By \autoref{cor:ResultantWedgeIrreducible}, $f \wedge \fC$ is irreducible in $A[T^*,X^*_{<\ell}]$, i.e., as a Chow form in dimension $\ell-1$ over $A[T^*]$.
  The same follows for an iterated wedge operation with several indeterminate polynomials of non-zero degrees.
\item
  \label{item:ChowFormWedgePermutation}
  Let $f \in A[X]_d$, $g \in A[X]_e$, and assume that $\ell \geq 1$.
  By alternation we have $\fC(\ldots, X^*_{\ell-1}, X^*_\ell) = \fC(\ldots, X^*_\ell, -X^*_{\ell-1}) = (-1)^D\fC(\ldots, X^*_\ell, X^*_{\ell-1})$.
  Therefore
  \begin{gather*}
    f \wedge g \wedge \fC = (-1)^{deD} g \wedge f \wedge \fC.
  \end{gather*}
  It follows that if $F = (f_i : i < k)$ is a family of $k \leq \ell+1$ homogeneous polynomials, $d = \prod \deg f_i$, $\sigma \in \fS_k$, and $F^\sigma = (f_{\sigma(i)} : i < k)$, then
  \begin{gather*}
    F^\sigma \wedge \fC = \sgn \sigma^{dD} F \wedge \fC.
  \end{gather*}
\item
  \label{item:ChowFormWedgeIteratedIntersection}
  Assume that $A = K$ is a field, and $\fC$ is a Chow form for some algebraic set $W$.
  Let $k \leq \ell + 1$ and $F = (f_i : i < k)$ be homogeneous polynomials over $K$.
  Then, applying induction to \autoref{prp:ChowFormAlgebraicSet}\autoref{item:ChowFormAlgebraicSetWedge}, the iterated wedge operation $F \wedge \fC \in K[X^*_0,\ldots,X^*_{\ell-k}]$ is a Chow form in dimension $\ell-k$ for $W \cap V(F)$.
  When $k = \ell+1$, this means that $F \wedge \fC = 0$ if and only if $W \cap V(F) \neq \emptyset$.
\end{enumerate}

\section{Generic projections}

Let $K$ be algebraically closed, and let $\fC \in K[X^*_{< \nu}]$ be a Chow form over $K$, in dimension $\ell = \nu-1$, of degree $D$.
We may assume that it is a Chow form for $W \subseteq \bP^n(K)$, in the sense of \autoref{dfn:ChowFormAlgebraicSet}.
Let $Y = (Y_i : i \leq \nu)$ be new indeterminates, let $\lambda = (\lambda_i : i \leq \nu)$ be linear forms, and let $\fC^\sharp$ be as per \autoref{dfn:AlternatingSharp}.
Define
\begin{gather*}
  P_\lambda(Y)
  = \fC^\sharp(\lambda,Y)
  = (-1)^{(\nu-j)D} \fC\left( \lambda_i - \frac{Y_i}{Y_j} \lambda_j : i \leq \nu, \ i \neq j \right) Y_j^D,
\end{gather*}
where the last equality holds for any $j \leq \nu$ by \autoref{lem:AlternatingSharp}\autoref{item:AlternatingSharpAlternate}.
This is a homogeneous polynomial of degree $D$ in $Y$, whose coefficients are polynomials in the coefficients of $\lambda$.
The coefficient of $Y_j^D$ in $P_\lambda$ is $(-1)^{(\nu-j)D} \fC(\lambda_{\neq j})$.

\begin{lem}
  With these definitions, $P_\lambda = 0$ if and only if $V(\lambda) \cap W \neq \emptyset$.
  If $V(\lambda) \cap W \neq \emptyset$, then $[x] \mapsto [\lambda x]$ defines a map that we may also denote by $\lambda\colon W \rightarrow \bP^\nu$, and $\lambda W = V(P_\lambda)$.
\end{lem}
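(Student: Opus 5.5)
The whole argument rests on evaluating $P_\lambda$ at an arbitrary point $y \in K^{\nu+1}$, using the Chow form property of $\fC$ for $W$ (\autoref{dfn:ChowFormAlgebraicSet}). I read the second sentence of the statement as assuming $V(\lambda) \cap W = \emptyset$, since that is exactly when $[x] \mapsto [\lambda x]$ is defined on $W$.

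Fix $y \neq 0$ and choose $j$ with $y_j \neq 0$. By the formula following \autoref{lem:AlternatingSharp}\autoref{item:AlternatingSharpAlternate},
\begin{gather*}
  P_\lambda(y) = \pm\, y_j^D\, \fC\Bigl( \lambda_i - \tfrac{y_i}{y_j} \lambda_j : i \leq \nu,\ i \neq j \Bigr).
\end{gather*}
These $\nu = \ell+1$ linear forms are exactly what $\fC$ takes as arguments. So $P_\lambda(y) = 0$ if and only if $W$ contains a point $[x]$ with $\lambda_i(x) = \tfrac{y_i}{y_j}\lambda_j(x)$ for all $i \neq j$. This condition says precisely that the vector $\lambda x$ is a scalar multiple of $y$, where the scalar $\lambda_j(x)/y_j$ may be zero.

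\emph{Key claim.} For $y \neq 0$ we have
\begin{gather*}
  P_\lambda(y) = 0 \iff \exists\, [x] \in W,\ c \in K \text{ with } \lambda x = c y.
\end{gather*}
The case $c = 0$ is exactly $[x] \in W \cap V(\lambda)$. Note that the condition no longer depends on the choice of $j$.

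\emph{First direction.} If $W \cap V(\lambda) \neq \emptyset$, then the condition holds with $c = 0$ for every $y \neq 0$. So $P_\lambda$ vanishes at every point of $K^{\nu+1}$. Since $K$ is infinite (being algebraically closed), this forces $P_\lambda = 0$.

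\emph{Converse and the image.} Suppose now $W \cap V(\lambda) = \emptyset$. Then $\lambda x \neq 0$ for every $[x] \in W$, so $[x] \mapsto [\lambda x]$ is a well-defined morphism $W \to \bP^\nu$. By the key claim, a point $[y]$ is a zero of $P_\lambda$ if and only if $[y] \in \lambda W$, so $V(P_\lambda) = \lambda W$ set-theoretically, provided $P_\lambda \neq 0$.

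It remains to show $P_\lambda \neq 0$, and I expect this to be the main obstacle. The set $W$ is a projective algebraic set of dimension at most $\ell = \nu - 1$. This holds because $\fC \neq 0$: the degenerate case $\fC = 0$ corresponds to $\dim W > \ell$, and I assume the standing setup excludes it, as in \autoref{thm:ChowFormAbstract}. The image of a projective variety under a morphism is closed, and its dimension is at most that of the source. Hence $\lambda W$ is a closed subset of $\bP^\nu$ of dimension $< \nu$, so it is proper. Any $[y] \notin \lambda W$ then satisfies $P_\lambda(y) \neq 0$, so $P_\lambda \neq 0$.

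This completes both the equivalence and the identity $\lambda W = V(P_\lambda)$. If one wanted to avoid the closedness of images, an alternative would be to use a generic $y$ over the field of definition of $W$ and $\lambda$, together with a dimension count. I would use the standard projective closedness argument.
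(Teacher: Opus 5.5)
Your proof is correct and follows the paper's argument. You evaluate $P_\lambda$ at $y$ with $y_j \neq 0$ via \autoref{lem:AlternatingSharp}, translate vanishing through the Chow form property into the existence of $[x] \in W$ with $\lambda x$ proportional to $y$, and read off both the vanishing criterion and $V(P_\lambda) = \lambda W$. The differences are minor: you correctly read the second hypothesis as $V(\lambda) \cap W = \emptyset$, and you make explicit why $\lambda W \neq \bP^\nu$ (since $\fC \neq 0$ forces $\dim W \leq \nu-1$), a step the paper asserts without comment.
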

\begin{proof}
  Since $K$ is algebraically closed, we may assume that everything happens in $K$.
  If $V(\lambda) \cap W \neq \emptyset$, then $V( Y_\nu \lambda_i - Y_i \lambda_\nu : i < \nu) \cap W \neq \emptyset$ regardless of $Y$, so $P_\lambda = 0$.
  Assume therefore that $V(\lambda) \cap W = \emptyset$.
  Then $\lambda x \neq 0$ for every $[x] \in W$, and $[\lambda x] \in \bP^\nu$.
  Let $[y] \in \bP^\nu$, so $y_j \neq 0$ for some $j$.
  Then $P_\lambda(y) = 0$ if and only if there exists $[x] \in W$ such that
  \begin{gather*}
    y_j \lambda_i x = y_i \lambda_j x
  \end{gather*}
  for all $i \leq \nu$.
  If $x$ is such, then $\lambda_j x \neq 0$ (or else $\lambda x = 0$, which cannot be), and $\lambda[x] = [y]$.
  Conversely, if $[x] \in W$, then $P_\lambda(\lambda x) = 0$.
  Therefore,
  \begin{gather*}
    V(P_\lambda) = \lambda W.
  \end{gather*}
  In particular, $V(P_\lambda) \neq \bP^\nu$, so $P_\lambda \neq 0$.
\end{proof}

\begin{prp}
  \label{prp:LinearProjectionIntersection}
  Let $f = (f_i : i < \nu)$ be a family of homogeneous polynomials in $Y$, and $\lambda$ linear forms as above.
  Then
  \begin{gather*}
    f \wedge P_\lambda = (f \circ \lambda) \wedge \fC.
  \end{gather*}
  Moreover, if $\fC$ is irreducible in $K[X^*_{<\nu}]$, and $f$ and $\lambda$ are indeterminate (in their respective degrees), then $P_\lambda$ is irreducible in $K[\lambda,Y]$ and $f \wedge P_\lambda = (f \circ \lambda) \wedge \fC$ is irreducible in $K[\lambda,f]$.
\end{prp}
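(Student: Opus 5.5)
The plan is to interpret $f \wedge P_\lambda$ as the iterated wedge of the family $f$ against the Chow form in dimension $\nu-1$ on $\bP^\nu$ attached to the hypersurface polynomial $P_\lambda$. Concretely, this is the form $\fC_\lambda(Y^*_{<\nu}) = \pm P_\lambda(Y^*_0 \wedge \cdots \wedge Y^*_{\nu-1})$, as in the case $\ell = n-1$ of \autoref{exm:ChowForm}. I will then identify $\fC_\lambda$ with the substituted form $\fC(Y^*_0\lambda, \ldots, Y^*_{\nu-1}\lambda)$, where $Y^*_i\lambda = \sum_j Y^*_{i,j}\lambda_j$ is a linear form in $X$. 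Both sides of the desired identity are polynomials in the coefficients of $\lambda$ and $f$. So it suffices to prove it with $\lambda$ and $f$ indeterminate, and then specialise by functoriality of the wedge.

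\emph{Step 1: the identity $\fC(Y^*_{<\nu}\lambda) = \pm P_\lambda(Y^*_0\wedge\cdots\wedge Y^*_{\nu-1})$.} This uses the alternation of $\fC^\sharp$ in the family $W^*_i = (\lambda_i, Y_i)$, $i \leq \nu$ (\autoref{lem:AlternatingSharp}), together with \autoref{lem:AlternatingDeterminant}. Take the $(\nu+1)\times(\nu+1)$ matrix $\bY^*$ whose first $\nu$ rows are the $Y^*_i$ and whose last row is indeterminate, and let $M$ be its adjugate. I then repeat verbatim the computation following \autoref{lem:AlternatingDeterminant}. Applying $M$ to the family $W^*_{\leq\nu}$ turns $\fC^\sharp(\lambda, z)$ with $z = (0,\ldots,0,1)$ into $\fC(Y^*_{<\nu}\lambda)$ up to $\det(\bY^*)^{D\nu}$, and turns the other side into $P_\lambda$ evaluated at the maximal minors. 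Cancelling the non-zero-divisor $\det\bY^*$ gives the identity. The sign is fixed by the convention $(-1)^{(\nu-j)D}$ in \autoref{lem:AlternatingSharp}\autoref{item:AlternatingSharpAlternate}.

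\emph{Step 2: compatibility of the wedge with substitution.} For a Chow form (or resultant) $\fC$ in $X^*_{<\nu}$, set $\fC_\lambda(Y^*_{<\nu}) = \fC(Y^*_{<\nu}\lambda)$. I claim that for homogeneous $g$ in $Y$,
\begin{gather*}
  g \wedge_{\nu-1} \fC_\lambda = (g\circ\lambda) \wedge_{\nu-1} \fC \quad \text{with } X^*_i \text{ replaced by } Y^*_i\lambda \text{ for } i < \nu-1.
\end{gather*}
To prove it, work over a ring where $\fC$ splits in the last variable as $a\prod_t (X^*\cdot x_t)$ (universal splitting via $\bZ[Y,\sigma_D]$, \autoref{dfn:Wedge}). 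Then $\fC_\lambda$ splits as $a\prod_t (Y^*\cdot \lambda x_t)$, because $(Y^*\lambda)\cdot x_t = Y^*\cdot(\lambda x_t)$. Both wedges therefore equal $a^{e}\prod_t g(\lambda x_t)$. Since $\fC_\lambda$ is again of the substituted form after wedging, I can iterate this over $f_{\nu-1}, \ldots, f_0$. This gives $f\wedge \fC_\lambda = (f\circ\lambda)\wedge\fC$, which together with Step 1 yields the main formula.

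\emph{Step 3: irreducibility.} Since $\fC$ is irreducible and $\lambda$ is indeterminate, $P_\lambda = \fC^\sharp(\lambda,Y)$ is irreducible in $K[\lambda,Y]$ by \autoref{lem:AlternatingSharp}\autoref{item:AlternatingSharpIrreducible}, since it is $\fC^\sharp$ with variables renamed. Hence $P_\lambda\wedge\det$ is an irreducible Chow form over the UFD $K[\lambda]$. This follows from \autoref{cor:ResultantWedgeIrreducible}, or directly, because a factorisation of $P_\lambda(Y^*_0\wedge\cdots)$ would descend by the Step 1 computation. Then iterating \autoref{cor:ResultantWedgeIrreducible} with the indeterminate $f_i$ (each of degree $\geq 1$) gives irreducibility of $f\wedge P_\lambda$ in $K[\lambda,f]$, as in item \autoref{item:ChowFormWedgeIrreducible} after \autoref{ntn:ChowFormWedge}.

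The main obstacle will be Step 1. Getting the adjugate bookkeeping, the exponent of $\det\bY^*$ and the signs exactly right is what upgrades ``equivalent'' to genuine equality. If this bookkeeping becomes unmanageable, a fallback is to compare both sides at generic $\lambda$ via the zero sets given by \autoref{prp:ChowFormAlgebraicSet} and the Lemma just proved, and then fix the constant by specialising each $f_i$ to a power of a linear form.
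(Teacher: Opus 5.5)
Your proof is correct, and its main line differs from the paper's; what you list as a fallback is essentially the paper's proof. The paper reduces to $\fC$ irreducible with $\lambda, f$ indeterminate. It gets irreducibility of $P_\lambda$ from \autoref{lem:AlternatingSharp} and of $f\wedge P_\lambda$ from \autoref{cor:ResultantWedgeIrreducible} over the algebraic closure of $K(\lambda)$. It then uses the lemma just before the proposition ($V(P_\lambda)=\lambda W$) to see that $f\wedge P_\lambda$ and $(f\circ\lambda)\wedge\fC$ vanish at the same specialisations of $f$. This gives $a(f\wedge P_\lambda)^r=(f\circ\lambda)\wedge\fC$, with $r=1$ by degrees and $a=1$ by substituting $f_i=Y_i$.

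You instead prove two exact algebraic identities: $P_\lambda\wedge\det=\fC(Y^*_0\lambda,\ldots,Y^*_{\nu-1}\lambda)$, and compatibility of $\wedge$ with linear substitution. This yields the formula over any ring and for any Chow form, without irreducibility, algebraic closure or a zero-set argument. It also decouples the identity from the irreducibility claim, so you never need the paper's passage from irreducibility in $K[\lambda,Y]$ to irreducibility over $\overline{K(\lambda)}$. The paper's route, in exchange, avoids all adjugate and sign bookkeeping and makes the geometric meaning (intersecting with the projected variety) explicit.

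Three small repairs are needed.

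\emph{Step 1, the computation.} The adjugate computation should start from $\fC^\sharp(\bY^*\lambda,z)=\fC(Y^*_{<\nu}\lambda)$, not from $\fC^\sharp(\lambda,z)=\fC(\lambda_{<\nu})$. Most cleanly, apply the identity $g(\bY^*)=g^\sharp(I,Y^*_0\wedge\cdots\wedge Y^*_{\nu-1})$ following \autoref{lem:AlternatingDeterminant} to the alternating polynomial $g(Y^*_{<\nu})=\fC(Y^*_{<\nu}\lambda)$, and note that $g^\sharp(I,Y)=\fC^\sharp(\lambda,Y)=P_\lambda(Y)$.

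\emph{Step 1, the sign.} There is no sign ambiguity. $\det(Y^*_{\leq\nu})$ is linear in $Y^*_\nu$ with coefficient vector $\operatorname{adj}(\bY^*)z=Y^*_0\wedge\cdots\wedge Y^*_{\nu-1}$, so $P_\lambda\wedge\det=P_\lambda(Y^*_0\wedge\cdots\wedge Y^*_{\nu-1})$ exactly.

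\emph{Step 3.} \autoref{cor:ResultantWedgeIrreducible} does not give irreducibility of $P_\lambda\wedge\det$: $P_\lambda$ is not an indeterminate polynomial, and irreducibility is not preserved under specialisation. Your direct argument is the right one, but it should be spelled out. By \autoref{lem:AlternatingFactor}, any factor of $P_\lambda(Y^*_0\wedge\cdots\wedge Y^*_{\nu-1})$ in $K[\lambda][Y^*_{<\nu}]$ is alternating and multihomogeneous, hence a polynomial in the maximal minors. These minors are algebraically independent, so the factorisation descends to $K[\lambda][Y]$, where $P_\lambda$ is irreducible. The subsequent iteration of \autoref{cor:ResultantWedgeIrreducible} is fine, since each intermediate wedge is alternating of positive degree, so its last variable group occurs.
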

\begin{proof}
  In order to prove either part, We may assume that $\fC$ is irreducible and that $\lambda$ and $f$ are indeterminate.
  Then $\fC = \fC_W$ for an irreducible $W$.
  By \autoref{lem:AlternatingSharp}\autoref{item:AlternatingSharpIrreducible}, $P_\lambda$ is irreducible in $K[\lambda,Y]$, and therefore in $L[Y]$, where $L$ is the algebraic closure of $K(\lambda)$.
  By \autoref{cor:ResultantWedgeIrreducible}, $f \wedge P_\lambda$ is irreducible in $L[f]$, and therefore in $K[\lambda,f]$.

  Consider a specialisation of $f$ to a family $g = (g_{<\nu})$, where $g_i \in L[X]_{d_i}$.
  Then $g \wedge P_\lambda = 0$ if and only if there exists $[y] \in V(g,P_\lambda)$.
  Since $V(P_\lambda) = \lambda W$, this is equivalent to the existence of $[x] \in W$ such that $[\lambda x] = [y] \in V(g)$, or yet equivalently, to the existence of $[x] \in W \cap V(g \circ \lambda)$.
  In other words, $g \wedge P_\lambda = 0$ if and only if $(g \circ \lambda) \wedge \fC = 0$.
  Stated equivalently, the polynomials $f \wedge P_\lambda$ and $(f \circ \lambda) \wedge \fC$ (both in $L[f]$) have the same zeros.
  Since the former is irreducible, we must have
  \begin{gather*}
    a (F \wedge P_\lambda)^r = (F \circ \lambda) \wedge \fC
  \end{gather*}
  for some $a \in L^\times$ and $r \geq 1$.
  A comparison of degrees yields that $r = 1$.
  Since $f$ were assumed indeterminate, we may substitute powers of linear polynomials, and reduce the calculation of $a$ to the linear case.
  In fact, we may substitute $f_i = Y_i$ for $i < \nu$.
  Then $Y_0 \wedge \ldots \wedge Y_{\nu-1} = (-1)^\nu (0,\ldots,0,1)$ and
  \begin{gather*}
    f \wedge P
    = (-1)^{\nu D}  P \wedge Y_0 \wedge \ldots \wedge Y_{\nu-1}
    = P( 0,\ldots,0, 1 )
    = \fC(\lambda_{<\nu})
    = (f \circ \lambda) \wedge \fC.
  \end{gather*}
  Therefore $a = 1$, which completes the proof.
\end{proof}

\bibliographystyle{begnac}
\bibliography{begnac}

\end{document}